\UseRawInputEncoding
\documentclass[12pt]{article}

\usepackage{amssymb, amsfonts, amsmath, authblk}
\usepackage{amsthm}
\usepackage{tikz-cd} 
\usepackage{graphicx}
\usepackage{hyperref}
\usepackage{todonotes}
\usepackage{gensymb}
\textheight9in \textwidth7in

\hoffset-0.8in 
\voffset-0.8in

\newcommand{\C}{\mathbb{C}}
\newcommand{\R}{\mathbb{R}}
\newcommand{\Z}{\mathbb{Z}}
\newcommand{\D}{\mathbb{D}}
\newcommand{\N}{\mathbb{N}}

\newcommand{\M}{\mathcal{M}}

\newcommand{\cD}{\mathcal{D}}
\newcommand{\cO}{\mathcal{O}}

\newcommand{\cA}{\mathcal{A}}


\newcommand{\ind}{\mathrm{ind}}
\newcommand{\coker}{\mathrm{coker}}

\newcommand{\sign}{\mathrm{sign}}

\newcommand{\Aut}{\mathrm{Aut}}
\newcommand{\im}{\mathrm{Im}}
\newcommand{\ii}{\mathrm{i}}
\newcommand{\jj}{\mathrm{j}}

\newcommand{\id}{\mathrm{id}}
\newcommand{\area}{\mathrm{area}}

\newcommand\inner[2]{\langle #1, #2 \rangle}
\newcommand\innerb[2]{\bigg\langle #1, #2 \bigg\rangle}
\newcommand{\delbar}{\bar{\partial}}
\newcommand{\defeq}{\mathrel{\mathop:}=}

\theoremstyle{plain}
\newtheorem{thm}{Theorem}[section]
\newtheorem{lem}[thm]{Lemma}
\newtheorem{cor}[thm]{Corollary}
\newtheorem{clm}[thm]{Claim}
\newtheorem{prop}[thm]{Proposition}

\theoremstyle{definition}
\newtheorem{defn}[thm]{Definition}

\theoremstyle{remark}
\newtheorem{rem}{Remark}

\newcommand{\wt}{\widetilde}

\newcommand{\ol}{\overline}

\newcommand{\p}{\partial}

\newcommand{\eps}{\varepsilon}

\newcommand{\vol}{\mathrm{vol}}

\newtheoremstyle{TheoremNum}
        {\topsep}{\topsep}              
        {\itshape}                      
        {}                              
        {\bfseries}                     
        {.}                             
        { }                             
        {\thmname{#1}\thmnote{ \bfseries #3}}
    \theoremstyle{TheoremNum}

\newtheoremstyle{DefinitionNum}
        {\topsep}{\topsep}              
        {}                      
        {}                              
        {\bfseries}                     
        {.}                             
        { }                             
        {\thmname{#1}\thmnote{ \bfseries #3}}
    \theoremstyle{DefinitionNum}

\begin{document}
\author{Ipsita Datta\thanks{School of Mathematics, Institute for Advanced Study, Princeton, NJ 08540, USA.
ipsi@ias.edu}}

\title{Lagrangian Cobordisms between Enriched Knot Diagrams}

\date{}
\maketitle 

\abstract
In this paper, we present new obstructions to the existence of Lagrangian cobordisms in $\R^4$ that depend only on the enriched knot diagrams of the boundary knots or links, using holomorphic curve techniques. We define enriched knot diagrams for generic smooth links. The existence of Lagrangian cobordisms gives a well-defined transitive relation on equivalence classes of enriched knot diagrams that is a strict partial order when restricted to exact enriched knot diagrams
To establish obstructions we study $1$-dimensional moduli spaces of holomorphic disks with corners that have boundary on Lagrangian tangles - an appropriate immersed Lagrangian closely related to embedded Lagrangian cobordisms. We adapt existing techniques to prove compactness and transversality, and compute dimensions of these moduli spaces. We produce obstructions as a consequence of characterizing all boundary points of such moduli spaces. We use these obstructions to recover and extend results about ``growing" and ``shrinking" Lagrangian slices.
We hope that this investigation will open up new directions in studying Lagrangian surfaces in $\R^4$.

\tableofcontents

\section*{Acknowledgements}
\addcontentsline{toc}{section}{Acknowledgements}
I thank my dissertation advisor Yasha Eliashberg for introducing me to the subject, suggesting these questions and many hours of discussions. 
I thank Laurent C\^{o}t\'{e}, Fran\c{c}ois-Simon Fauteux-Chapleau, Joj Helfer, and Yuan Yao for useful discussions. Thanks to Josh Sabloff for reading details. Thanks to the anonymous referee for their numerous comments and suggestions that led to the present exposition.

I thank Department of Mathematics at Stanford University, Maryam Mirzakhani Graduate Fellowship, the Institute for Advanced Study, US National Science Foundation Grants DMS-1807270 and DMS-1926686, for their support.

\section{Introduction}\label{sec_intro}
Lagrangians are interesting and important objects in symplectic geometry. It is known that they display both flexible and rigid properties. 
A generic Lagrangian surface in $\R^4$ intersects parallel hyperplanes, $\R^3_a\defeq \{y_2 = a\}$ and $\R^3_b$, transversely. So, $L \cap \R^4_{[a,b]} \defeq \{a \leq y_2 \leq b\}$ gives a cobordism from $\p_- L = L \cap \R^3_a$ to $\p_+ L = L \cap \R^3_b$ that is Lagrangian.
\begin{figure}[h!]
\begin{center}
\includegraphics[width = 5in]{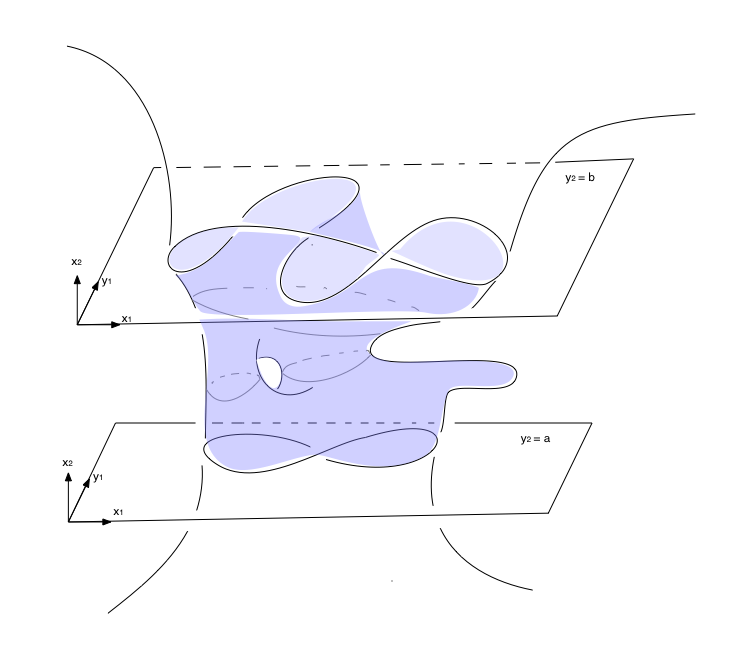}
 \end{center}
 \caption[caption]
    {\tabular[t]{@{}l@{}}Lagrangian cobordism is a natural way of viewing Lagrangian surfaces\\ by restricting our attention to $\R^4_{[a,b]}$.\endtabular}
  \label{fig:cut-lagrangian}
\end{figure}

Lagrangian cobordism can refer to one of many similar objects. Lagrangian cobordisms were first studied by Arnold \cite{arn1}.
For a usual cobordism, $\beta$, between manifolds $B_1$ and $B_0$, that is, $\p \beta = B_1 - B_0$,  Arnold referred to Lagrangian submanifolds $\lambda$ in $T^* \beta$ with Lagrange border $\underline \p \lambda = \pi(\p L \cap \p T^* \beta) = L_1 - L_0$, where $\pi : \p T^* \beta \to T^* \p \beta$ is the standard projection, as a \textbf{Lagrange cobordism over} {\boldmath$\beta$} between Lagrangian manifolds $L_1 \subset T^* B_1$ and $L_0$ in $T^* B_0$. Turns out
Lagrange cobordisms in $\beta = B \times [0,1]$ give an equivalence relation on the set of immersed Lagrangians in $B$.  

Eliashberg (\cite{eliash_cobordisme}) showed that immersed Lagrangian cobordisms obey an $h$-principle, that is, their existence is assured upto vanishing of algebraic topological invariants. 
This is referred to as a ``flexible" phenomenon in symplectic geometry.

Embedded Lagrangians are more ``rigid." For example, embedded Lagrangians in $\R^2$ are only circles, and it can be shown that all embedded Lagrangian cobordisms between circles in $\R^2$ must be between circles of equal area. 
Biran and Cornea show that monotone Lagrangians cobordisms in $M \times \R^2$, for symplectic manifold $M$, between embedded Lagrangians in $M$ preserve Floer homology and similar invariants. 

What we are considering is somewhere in between. Like in \cite{ST}, we consider cobordisms that are embedded in $\R^4$ but we allow the Lagrangian projection of the boundaries to $\R^2$ to be immersed.
Sabloff and Traynor used generating functions to define capacities,
$
c_{\pm}^{L,a} : H^* (L_a) \to (-\infty, 0], C_{\pm}^{L,a} : H^* (L_a) \to [0, \infty),
$
 for unknotted, planar, flat-at-infinity Lagrangian cobordisms in $\R^{2n}$ \cite{ST}.
Slices of such Lagrangians in $\R^4$ (upto compactly supported area preserving diffeomorphisms of $\R^2$) have a partial order defined by the existence of a Lagrangian cobordism in $\R^4$.

In this paper we consider relatively exact Lagrangian cobordisms in $\R^4$. Relative exactness is a generalization of exactness - A Lagrangian cobordism is said to be {\bf relatively exact} if the image of the function $\omega: H_2(\R^4, L; \Z) \to \R$ is contained in the set of $\Z$-multiples of $a(\p_- L) = |\int_{\p_- L} x_1 dy_1|$. In fact, relative exactness follows from topological considerations of the cobordism (see Remark \ref{rem_automatic relative exactness}). This means that, in some cases, obstructing relatively exact Lagrangian cobordisms, obstructs the existence of any Lagrangian cobordisms with those boundaries. We find obstructions to the existence of relatively exact Lagrangian cobordisms between knots/links based on the combinatorial data of enriched knot diagrams. Intuitively, the enriched knot diagram for a knot, $K$, keeps track of the topological of the knot like a standard knot diagram, and additionally records the geometric data of areas of different regions in $\R^2$ defined by $\pi_1(K)$ for the standard projection $\pi_1: \R^3 \to \R^2$ forgetting the last coordinate (see Definition \ref{defn_diagram}).
\begin{defn}\label{defn_undercut}
Given two diagram classes, $[\cD_1, \sigma_1, \cA_1]$ and $[\cD_2, \sigma_2, \cA_2]$, we say $[\cD_1, \sigma_1, \cA_1]$ \textbf{undercuts} $[\cD_2, \sigma_2, \cA_2]$ if there exists a relatively exact Lagrangian cobordism (Definition \ref{defn_lag cobordism}), $L$, such that $K_{\cD_1} \prec_L K_{\cD_2}$, for links $K_{\cD_j}$ having diagram $[\cD_j, \sigma_j, \cA_j]$, $j = 1,2$. We denote this by $[\cD_1, \sigma_1, \cA_1] \prec [\cD_2, \sigma_2, \cA_2]$ (or $[\cD_1] \prec [\cD_2]$).
\end{defn}

\begin{lem}\label{lem_partial order}
The undercut relation is a transitive relation on equivalence classes of enriched knot diagrams, that is,
\begin{enumerate}
\item the relation $\prec$ is well-defined on equivalence classes of diagrams;
\item undercutting is transitive: if $[\cD_3] \prec [\cD_2]$ and $[\cD_2] \prec [\cD_1]$, then $[\cD_3] \prec [\cD_1]$.
\end{enumerate}
Additionally, the undercut relation gives a strict partial order when restricted to exact diagrams (see Definition \ref{defn_exact diagram}), that is,
\begin{enumerate}
\item undercutting is non-reflexive: for all exact diagrams $[\cD]$, $[\cD] \nprec [\cD]$;
\item undercutting is anti-symmetric: for $[\cD_1] \neq [\cD_2]$, $[\cD_1] \prec [\cD_2]$ implies $[\cD_2] \nprec [\cD_1]$;
\item not all enriched knot diagrams are related by the undercut relation.
\end{enumerate}
\end{lem}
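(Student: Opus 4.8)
The plan is to establish each of the listed properties in turn, deducing the well-definedness and transitivity from the corresponding statements about Lagrangian cobordisms, and reserving the partial-order axioms for the exact case where the relative-exactness constraint becomes a genuine area obstruction.

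First I would address well-definedness. Given a Lagrangian cobordism $L$ realizing $K_{\cD_1} \prec_L K_{\cD_2}$, and given any other links $K'_{\cD_j}$ with the same enriched diagram classes, I would build a new cobordism by pre- and post-composing $L$ with Lagrangian cylinders (traces of Hamiltonian isotopies of $\R^3$) interpolating from $K'_{\cD_1}$ to $K_{\cD_1}$ and from $K_{\cD_2}$ to $K'_{\cD_2}$; the point is that two links sharing an enriched knot diagram class differ by an ambient isotopy that is area-preserving on the planar projection (by the definition of the equivalence of enriched diagrams), hence by a compactly supported Hamiltonian isotopy whose suspension is an \emph{exact} Lagrangian cylinder, and concatenation of a relatively exact cobordism with exact cylinders remains relatively exact since $\omega$ on $H_2$ is unchanged and $a(\p_-L)$ is unchanged. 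Transitivity is the concatenation statement itself: given relatively exact $L$ with $K_{\cD_3} \prec_L K_{\cD_2}$ and relatively exact $L'$ with $K_{\cD_2} \prec_{L'} K_{\cD_1}$, glue them along the common slice $K_{\cD_2}$ (after a Hamiltonian isotopy to make them match exactly, as above, and after the standard bending-to-cylindrical-ends argument so the glued surface is smooth and embedded) to get a cobordism $L'' = L' \circ L$; one checks $\Img(\omega|_{H_2(\R^4, L'')})$ still lies in $a(\p_-L'')\Z = a(K_{\cD_3})\Z$, using the Mayer--Vietoris sequence for $H_2(\R^4, L'')$ in terms of $H_2(\R^4, L)$ and $H_2(\R^4, L')$ together with $a(K_{\cD_3}), a(K_{\cD_2}), a(K_{\cD_1})$ being pairwise $\Z$-commensurable under the exactness/area hypotheses. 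These three items are essentially formal once the gluing and suspension constructions are in place.

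The substantive content is non-reflexivity for exact diagrams, and this is where I expect the main obstacle. The strategy is to invoke the obstruction coming from the $1$-dimensional moduli spaces of holomorphic disks with corners on Lagrangian tangles promised in the abstract and in the later sections: if $[\cD] \prec_L [\cD]$ via a relatively exact (in fact, for an exact diagram, exact) Lagrangian cobordism $L$ with $\p_-L = \p_+L$ up to the diagram equivalence, then capping off $L$ — or rather running the moduli-space count for the tangle associated to $L$ — produces a contradiction, typically by exhibiting a holomorphic disk whose area is a positive quantity forced to be both a nonzero multiple of $a(\p_-L)$ and strictly less than it, or by a parity/count mismatch among the boundary points of the $1$-dimensional moduli space. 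Concretely I would argue: exactness of the diagram means there is no nonconstant holomorphic disk of the relevant index with the prescribed corners on the initial slice, but the cobordism $L$ from $[\cD]$ to itself, being nontrivial on the relevant capacity, must create one; quantitatively, the area of such a disk is $\sum \pm(\text{areas of regions})$ which for an exact diagram is forced to vanish or to be a nonzero multiple of $a(\p_-L)$, contradicting the strict area inequality that a genuine nonconstant disk satisfies. This reduces to the characterization of boundary points of the moduli spaces established later in the paper, so at the level of this lemma I would simply cite that characterization; the real work is deferred.

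Finally, anti-symmetry follows from non-reflexivity plus transitivity in the usual way: if $[\cD_1] \prec [\cD_2]$ and $[\cD_2] \prec [\cD_1]$ with $[\cD_1] \neq [\cD_2]$, transitivity gives $[\cD_1] \prec [\cD_1]$, contradicting non-reflexivity — here one must check the diagrams in question are exact so that non-reflexivity applies, which holds by hypothesis. For the last item, that not all enriched knot diagrams are comparable, I would exhibit an explicit pair: take two enriched diagrams of, say, the unknot whose distinguished bounded regions have areas $A_1$ and $A_2$ with $A_1/A_2$ irrational (or more simply with $A_1 \neq A_2$ and no admissible sequence of moves/cobordisms connecting them), and observe that any relatively exact cobordism in either direction would force a divisibility relation between $A_1$ and $A_2$ incompatible with the choice; this is a direct consequence of the relative exactness constraint $\Img\,\omega \subset a(\p_-L)\Z$ applied to the obvious class in $H_2(\R^4,L)$ bounded by the two slices. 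The only delicate point is ensuring the chosen diagrams are genuinely realized by embedded links and that no indirect cobordism exists, which again rests on the area-obstruction machinery rather than on anything in this section.
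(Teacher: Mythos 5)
Your treatment of well-definedness and transitivity is in line with the paper: Lemma \ref{lem_relation well defined} supplies the interpolating Lagrangian collar from the Hamiltonian-isotopy characterization of diagram equivalence, and Lemma \ref{lem_gluing lagrangians smoothly} does the gluing. Your observation that concatenating with exact cylinders preserves relative exactness is the right point to make. Anti-symmetry as a formal consequence of non-reflexivity and transitivity is also what the paper does.

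The gap is in non-reflexivity. You propose to derive it from the holomorphic-disk moduli space machinery of the later sections, expecting either an area contradiction or a parity mismatch. This does not work as stated, for a concrete reason: the obstruction theorem (Theorem \ref{thm_main application}) only produces anything when there is a \emph{big disk} bound by the diagram pair (Definition \ref{defn_big little disk}), and an exact diagram need not bound any big disk at all --- for instance, an exact diagram of an embedded circle has no crossings and hence no disks with corners whatsoever, so the machinery is silent. Moreover, there is a logical ordering issue: Theorem \ref{thm_main application} is proved in Section \ref{sec_applications}, well after this lemma, whereas the paper gives a self-contained proof of non-reflexivity here. The paper's argument is quite different and much cleaner: if $[\cD]\prec[\cD]$, arrange (via Lemma \ref{lem_relation well defined}) that $\partial_+L = \tau_{b-a}(\partial_- L)$ for the vertical translation $\tau_{b-a}$; quotienting $\R^4$ by $\tau_{b-a}$ produces a \emph{closed} weakly exact Lagrangian in $\C \times T^*S^1$, which cannot exist by Gromov's theorem \cite[Theorem $2.3.B_1$]{gro}. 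This uses only exactness of the diagram (to pass from relatively exact to weakly exact) and a classical result, not any moduli-space count. You should replace your non-reflexivity sketch with this quotient argument.

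Your proposed example for the last item is also flawed: you suggest two unknot diagrams with regions of different (or incommensurable) areas $A_1 \neq A_2$, noting that Stokes' theorem obstructs cobordism. But the additional items are about \emph{exact} diagrams, meaning total signed area $\int_\cD x_1\,dy_1 = 0$, so $a(K) = 0$ for all of them; distinct exact diagrams cannot be separated by a naive area mismatch. An actual example of incomparable exact diagrams requires the obstruction machinery; the paper exhibits one in Corollary \ref{cor_no relation exists} using $8_+(A)$ and $C^{+-+}(A,B,B)$, and this is precisely where the deferral to later sections is appropriate.
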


Given an enriched knot diagram, $\cD$, 
and two disks, $A$ and $B$, bound by $\cD$, (Definition \ref{defn_bound disk}), we say $A$ and $B$ \textbf{share a corner} $q \in X(\cD)$ if $q$ is a corner for both the disks $A$ and $B$ and both $A$ and $B$ have the same sign at $q$.
\begin{defn}\label{defn_big little disk}
A disk, $A$, bound by a diagram pair $((\cD_1,\sigma_1, \cA_1),( \cD_2, \sigma_2, \cA_2))$ is called \textbf{big} if:
\begin{enumerate}
\item $A$ has all convex corners;
\item $A$ is bound by $\cD_1$ and has all negative corners, or $A$ is bound by $\cD_2$ and has all positive corners;
\item $A$ is an aligned disk as in Definition \ref{defn_aligned disk};
\item either $a(D_1) = a(D_2) = 0$ or $\area(A) \leq a(D_1)$.
\end{enumerate} 

Given a diagram pair $((\cD_1,\sigma_1, \cA_1),( \cD_2, \sigma_2, \cA_2))$ and a big disk, $A$, bound by it, we define a {\bf little disk relative to ${\bf A}$} to be a disk, $B$, bound by the pair $((\cD_1,\sigma_1, \cA_1),( \cD_2, \sigma_2, \cA_2))$, distinct from $A$, such that one of the following conditions hold:
\begin{enumerate} 
\item[(a)] $B$ and $A$ are both bound by $\mathcal{D}_2$ and $B$ has sign equal to $+1$ only at those corner points it shares with $A$. The rest are negative corners. If $B$ has no negative corners, $B$ must share all of $A$'s corners;
\item[(b)] $B$ and $A$ are both bound by $\mathcal{D}_1$ and $B$ has sign equal to $-1$ only at those corner points it shares with $A$. The rest are positive corners. If $B$ has no positive corners, $B$ must share all of $A$'s corners;
\item[(c)] $A$ is bound by $\cD_2$; $B$ is bound by $\cD_1$ and has all positive corners , with at least one corner when  $A$ has non-zero number of corners; 
\item[(d)] $A$ is bound by $\cD_1$; $B$ is bound by $\cD_2$ and has all negative corners, with at least one corner when $A$ has non-zero number of corners.
\end{enumerate}
\end{defn}
We show that the existence of a big disk implies that of a little disk whenever an enriched knot diagram, $\cD_1$, undercuts another enriched knot diagram, $\cD_2$. 
\begin{thm}\label{thm_main application}
If $[\cD_1,\sigma_1, \cA_1]\prec [ \cD_2, \sigma_2, \cA_2]$ and there exists a big disk $A$ bound by the pair, then there must exist a little disk $B$ relative to $A$ bound by the diagram pair, such that
\begin{align*}
\mathrm{area}(A) \geq \mathrm{area} (B).
\end{align*}
Equality of area can hold only when $[A] = [B] \in \Pi_2(\R^4, L)$ for any possible Lagrangian tangle $L$ with $\cD_{\p_+ L} = \cD_2$ and $\cD_{\p_- L} = \cD_1$ and they share all corners. 
\end{thm}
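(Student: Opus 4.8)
The plan is to package the combinatorial/geometric input of the theorem into a suitable $1$-dimensional moduli space of holomorphic disks with corners on a Lagrangian tangle $L$ interpolating between $\cD_1$ and $\cD_2$, and then read off the conclusion from the classification of boundary points of that moduli space (which, per the excerpt, is the technical heart of the paper developed in the preceding sections). First I would fix a Lagrangian tangle $L$ with $\cD_{\p_- L}=\cD_1$ and $\cD_{\p_+ L}=\cD_2$, whose existence is guaranteed by $[\cD_1]\prec[\cD_2]$ and relative exactness; the big disk $A$, being an aligned disk with all convex corners and the sign condition in Definition \ref{defn_big little disk}, gives a class $[A]\in\Pi_2(\R^4,L)$ realized by a genuine holomorphic disk with corners once we choose a generic almost complex structure adapted to $L$. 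The area bound $\area(A)\le a(D_1)$ (or the flat case $a(D_1)=a(D_2)=0$) is exactly what prevents the disk from having ``too much energy'' and hence keeps it inside the compact part of the moduli space where the compactness theorem applies.

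Next I would set up the relevant moduli space: roughly, $\M$ consists of holomorphic disks with corners in class $[A]$ (or in a family of classes obtained by bubbling), and by the transversality and dimension computations cited earlier it is a $1$-manifold with boundary, where the boundary corresponds to degenerations — nodal/broken configurations splitting a big disk into a big disk plus little disk(s), corner-strip breaking at $X(\cD)$, and the possible ``trivial'' end where $A$ itself sits. The key step is to run the compactness argument: a sequence in $\M$ either stays in the interior or converges to a broken configuration; since $A$ is aligned and all its corners are convex, the only allowed breakings are those producing a disk $B$ bound by the pair with precisely the sign pattern in cases (a)–(d) (the sign bookkeeping at shared versus non-shared corners is forced by the ends of $\cD_1$ and $\cD_2$ and by which side the broken piece attaches). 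Summing areas across the breaking — using that area equals symplectic energy, which is additive over the pieces and non-negative on each holomorphic piece — yields $\area(A)=\area(B)+\area(\text{rest})\ge\area(B)$.

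For the rigidity (equality) clause I would argue that $\area(A)=\area(B)$ forces the ``rest'' of the broken configuration to carry zero energy, hence to be constant, so the breaking is trivial: the limiting configuration is just $B$ (together with ghost components), which forces $[A]=[B]$ in $\Pi_2(\R^4,L)$ and, tracing the corner incidences through the degeneration, forces $A$ and $B$ to share all corners. One must check this holds for \emph{every} admissible tangle $L$, which is immediate because the identity $[A]=[B]$ and the coincidence of corner sets are statements about the diagrams and their lifts, not about the particular $L$; any $L$ with the prescribed boundary diagrams admits the same class and the same argument.

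The main obstacle I expect is not the energy/area bookkeeping (which is formal once energy $=$ area is in hand) but rather \emph{ruling out unwanted degenerations and guaranteeing non-emptiness of the boundary of $\M$}: one must show that the $1$-manifold $\M$ is not closed, i.e.\ that at least one non-trivial end exists, and that every end is of the claimed big-plus-little type rather than, say, a sphere bubble, a disk bubble with the wrong sign pattern at corners, or a breaking into two disks neither of which qualifies as a little disk relative to $A$. This is precisely where the hypotheses of Definition \ref{defn_big little disk} — convexity of all corners, alignment, the one-sided sign condition, and the area cap $\area(A)\le a(D_1)$ — are used to prune the Gromov–Floer compactification down to exactly cases (a)–(d); I would lean on the boundary-classification theorem established earlier in the paper and verify case-by-case that the surviving boundary strata match the definition of a little disk.
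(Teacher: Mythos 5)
Your overall architecture matches the paper's: fix a Lagrangian tangle $L$ with the prescribed boundary diagrams, realize the big disk $A$ as a horizontal holomorphic disk with corners $(u,{\bf z})$, study the one-dimensional moduli space $\ol\M(u)$, invoke compactness and the boundary classification of Theorem \ref{thm_properties of type 2 disks}, and read off the area inequality from additivity of symplectic area across the pieces of the broken limit. The rigidity clause is handled in the same way.

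However, one step would fail as written: you propose to realize $[A]$ by a holomorphic disk \emph{after choosing a generic almost complex structure adapted to $L$}. The horizontal disk lies in a complex plane $\R^2_a$ or $\R^2_b$ and is holomorphic precisely for the \emph{standard} $\ii$; perturbing $J$ generically destroys horizontality, and with it the dimension calculus of Section \ref{sec_dimension calculations}, the sign rigidity at corners (Lemma \ref{lem_signs of vert disks}), and the boundary-touching analysis (Lemma \ref{lem_cpt_1}) --- all of which exploit the split structure of the standard complex structure (harmonicity of $\pi_{y_2}\circ u$, the projection $\pi_2$, the decomposition $T^1 L \oplus T^2 L$). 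The paper therefore works with the fixed standard $\ii$ and obtains transversality by a dedicated \emph{automatic transversality} statement, Theorem \ref{thm_automatic transversality}, using $\mu_2(u)\ge -1$. You should use that mechanism, not generic $J$.

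Two smaller imprecisions. Your worry about ``non-emptiness of the boundary of $\M$'' is resolved by a parity argument you do not quite state: once $[u]$ itself is shown to be a boundary point (Part 1 of Theorem \ref{thm_properties of type 2 disks}) and Type 1 stable disks are excluded (Lemma \ref{lem_interior breaking in pairs}), the boundary of a compact $1$-manifold has an even number of points, hence there is a second Type 2 boundary point; its horizontal component (Parts 3--5 of Theorem \ref{thm_properties of type 2 disks}) is the required little disk. Finally, the area cap $\area(A)\le a(D_1)$ is not there to ``keep $\M$ compact'' --- Gromov compactness (Theorem \ref{thm_cptness}) holds for a fixed homology class with no extra energy cut-off; the cap's role is to preclude disk bubbling on $L^\circ$ via relative exactness.
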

Note that even though the condition on equality a priori feels like it depends on more than just the diagrams, as we can determine the topology of the tangle using the diagrams from Equation (\ref{eqn_euler char from writhe}), this condition actually depends only on the diagrams.

\begin{figure}[h!] 
\begin{center}
  \includegraphics[width = 5in]{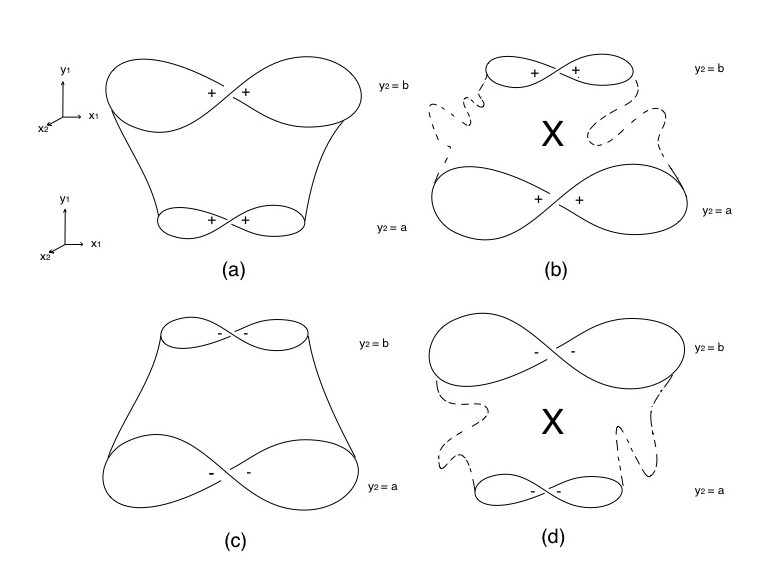}
   \end{center}
 \caption[caption]{\tabular[t]{@{}l@{}}Parts (a) and (c) show for $r < R$ possible Lagrangian cobordisms \\ $8_+(r) \prec 8_+(R)$ and $8_-(R) \prec 8_-(r)$, respectively. \\There are no Lagrangian cobordisms when the orders are reversed\\ as in (b) and (d). This figure appeared in \cite{ST}. \endtabular}
 \label{fig:8knots}
\end{figure}
The motivating question or observation that propelled this research is as follows. Consider the following $8$-shaped curves in $\R^3$ : $8_{\pm}(\pi r^2) = \{(x_1, y_1, x_2)| x_1^2 + x_2^2 = r^2, y_1 = \pm 2 x_1x_2\}$. It is possible to construct Lagrangian cobordisms $L \subset \R^4_{[a,b]}$ such that $\p_+ L = 8_+ (A)$ and $\p_- L = 8_+ (B)$ for $A>B$, but it was shown in \cite{ST} that this is impossible for $A<B$. Similarly, it is possible to construct $L \subset \R^4_{[a,b]}$ such that $\p_+ L = 8_- (A)$ and $\p_- L = 8_- (B)$ for $A<B$, but impossible for $A>B$. 

We are able to recover these results using our methods. Additionally, we are able to show similar results for knots like trefoils with positive or negative crossings, $T_\pm$, which are beyond the scope of the techniques in \cite{ST}. These are presented as corollaries of Theorem \ref{thm_main application} in Section \ref{sec_applications}. Many
similar results can be obtained by applying Theorem \ref{thm_main application} and we only present select few.

 To establish the obstruction, Theorem \ref{thm_main application}, we study moduli spaces on holomorphic disks with corners that have boundary on ``Lagrangian tangles" which are immersed Lagrangian cobordisms that have boundary on copies of $\R^2$ of the form $\R^2_a \defeq \{x_2 = 0, y_2 = a\}$ such that the Lagrangian is embedded away from its boundary. 
\begin{defn}\label{defn_lag tangle}
A \textbf{Lagrangian tangle}, $L$, is a compact, connected, oriented, immersed Lagrangian submanifold (with boundary) such that:
\begin{itemize}
\item $L \subset \R^4_{[a,b]} \defeq \{a \leq y_2 \leq b \} \subset \R^4$ for some $a < b \in \R$;
\item $L$ intersects $\R^3_a \cup \R^3_b$ transversely, and $\p L = L \cap (\R^3_a \cup \R^3_b)$;
\item $\p L$ is \textbf{flat}, that is, $\p L \subset \R^2_a \cup \R^2_b$;
\item the only self-intersections of $L$ are transverse double points on $\p L$;
\item $L$ intersects $\R^3_t$ transversely for $b-\epsilon < t \leq b$ and for $a \leq t < a+ \epsilon$, for some small $\epsilon > 0$;
\item $L \cap \R^4_{[a+\epsilon, b-\epsilon]}$ is relatively exact as defined in Definition \ref{defn_rel exact} for all $\epsilon > 0$.
\end{itemize}
To show that one can go between (embedded) relatively exact Lagrangian cobordisms and Lagrangian tangles as required, we include constructions of Lagrangian collars. Some of the constructions rely on Lagrangian movies perspective which was utilized very effectively in \cite{Sauvaget2004} and \cite{lin} previously.
\end{defn}

Theorem \ref{thm_main application} is proved in Section \ref{sec_applications} using Theorem \ref{thm_properties of type 2 disks}. Theorem \ref{thm_properties of type 2 disks} is our main technical result where we completely classify boundary points of $1$-dimensional moduli spaces of holomorphic disks with corners that have boundary on a Lagrangian tangle. The proof of Theorem \ref{thm_properties of type 2 disks} spans the entirety of Section \ref{sec_boundary points}.
\begin{figure}[h!]
 \begin{center}
  \includegraphics[width = 6in]{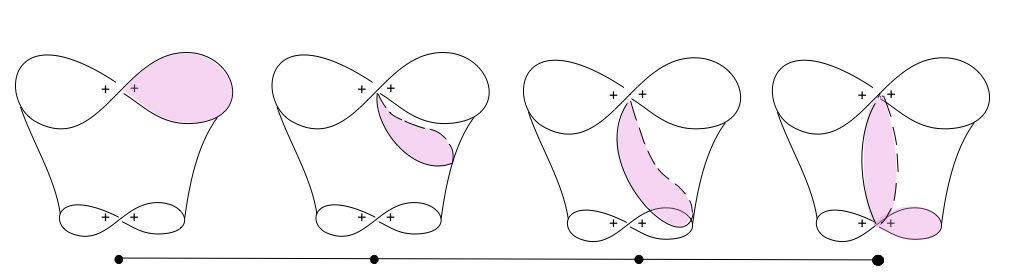}
  \end{center}
 \caption[caption]{\tabular[t]{@{}l@{}}Depiction of a holomorphic disk with corners that has boundary on a  \\Lagrangian tangle moving in a one dimensional moduli space.\endtabular}
  \label{fig:proof summary}
\end{figure}

In \cite{ST}, Sabloff and Traynor study the Morse theory of generating functions of unknotted and planar-at-infinity Lagrangians. An embedded submanifold, $L \subset \R^4 = T^* \R^2$ is ``planar" if it is diffeomorphic to the zero section, $L_0 = \{y_1= y_2 = 0\}$. A planar Lagrangian is ``flat-at-infinity" if it agrees with $L_0$ outside of a compact subset of $\R^4$. These conditions mean that the Lagrangian cobordism has genus $0$ and the writhe (with respect to the blackboard framing) of the knots that can appear as Lagrangian slices has to be $\pm 1$. Relaxing the ``global" conditions of unknottedness and planarity implies we are able to prove results for a larger range of knots. In particular, we allow writhes to be any integer, and do not require the knots to be capped or filled by a Lagrangian disk.
\begin{figure}[h!]
\begin{center}
  \includegraphics[width = 6in]{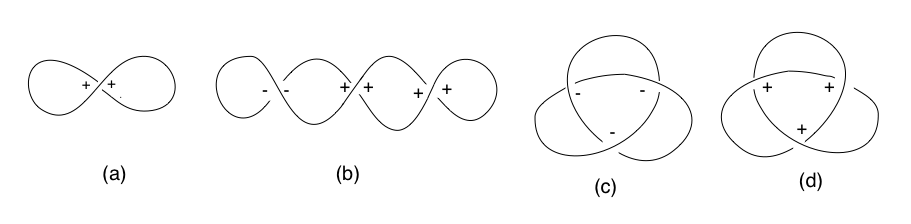}
   \end{center}
 \caption{(a) $\mathrm{wr} = 1$, (b) $\mathrm{wr} = 1$, (c) $\mathrm{wr} = -3$, (d) $\mathrm{wr} = 3$.}
  \label{fig:writhe diagrams}
\end{figure}

A related, well-studied, setup is to consider Lagrangians in the symplectization of a contact manifold. For example, the contact manifold $\R^3$ with contact form $\alpha = dz - y dx$ has symplectization $\R^4_+ = \R_+ \times \R^3$ with the symplectic form $d(e^t\alpha)$. Here, one studies Lagrangians that are cylindrical over Legendrians at $t = \infty$ and $0$. With the change of coordinates $x_1 = x, y_1 = e^t y, x_2 = e^t, y_2 = z$, we can view the symplectization as $\R^4$ with the standard symplectic form. In these new coordinates the Lagrangians would be conical over Legendrians at $\pm \infty$. This situation is a special case of the setup that we consider in this paper. 
An enriched knot diagram, $\cD$, has a Legendrian representative if and only if
\begin{itemize}
\item the total area, $\int_\cD x_1d_1$, bound by the diagram is zero, and
\item at each crossing, the sign of a corner is the same as the sign of the area bound by the arc that starts and ends at that crossing and contains that corner. (Here we consider the parametrization on the arc that is anti-clockwise along the boundary of the considered corner.)
\end{itemize}
\begin{figure}[h!]
\begin{center}
  \includegraphics[width = 5in]{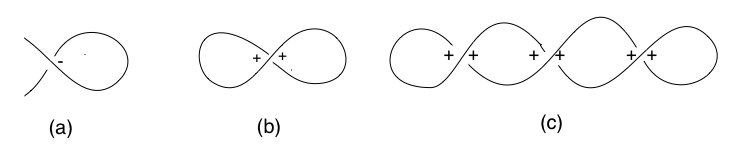}
   \end{center}
 \caption[caption]{\tabular[t]{@{}l@{}}(a) This loop cannot appear in an enriched knot diagram that has\\ \quad\quad  a Legendrian representative.\\ (b),(c) Diagrams with Legendrian representatives. \endtabular}
  \label{fig:Legendrian diagrams}
\end{figure}
This means that the $8_+$ enriched knot diagrams have Legendrian representatives whereas the $8_-$ enriched knot diagrams don't. We remark here that the signs look opposite to more common conventions (for example, those of Legendrian contact homology in \cite{}) because we slice by $y_2 = z=$ constant, whereas, in the above described coordinates, the more common convention is to slice by $t =$ constant which is equivalent to $x_2 =$ constant.

\section{Preliminaries}\label{sec_prelim}
\subsection{Setup}
Consider the standard $\R^4$ with coordinates $x_1, y_1, x_2, y_2$, with the standard symplectic form, 
$
\omega = dx_1 \wedge dy_1 + dx_2 \wedge dy_2.
$
The complex structure obtained by identifying $\R^4$ with $\C \oplus \C$, given by 
$
\ii(\p_{x_j}) = \p_{ y_j}$ for $j = 1,2,
$
is compatible with $\omega$. The pair $(\omega, \ii)$ generate the standard Riemannian metric on $\R^4$,
$
g = dx_1 \otimes dx_1 + dy_1 \otimes dy_1 + dx_2 \otimes dx_2 + dy_2 \otimes dy_2 = \omega( \cdot, \ii \cdot).
$
Let us fix notation for some special subsets of $\R^4$:
\begin{itemize}
\item $\R^4_{I} \defeq \{(x_1, y_1, x_2, y_2) \in \R^4 | \, y_2 \in I\}$ is a restricted part of $\R^4$ for any interval (open or closed) $I \subset \R$;
\item $\R^3_a \defeq \{(x_1, y_1, x_2, y_2)| \, y_2 = a\}$ is a hyperplane for any $a \in \R$;
\item $\R^2_a \defeq \{(x_1, y_1, x_2, y_2)| \, y_2 = a, x_2 = 0\}$ is a complex plane contained in $\R^3_a$ for any $a \in \R$.
\end{itemize} 
A \textbf{ Lagrangian submanifold}, $L$, of $\R^4$ is a half-dimensional submanifold with or without boundary, on which the symplectic form vanishes, that is, $\omega|_{TL} \equiv 0$. In case of $\R^4$, hal-dimensional means a real $2$ dimensional manifold, so $L$ is a surface. We assume that all our manifolds are smooth unless mentioned otherwise. For any Lagrangian surface $L \subset \R^4$ we denote by $L_a$ the \textbf{slice} of $L$, 
\begin{align*}
L_a \defeq L \cap \R^3_a
\end{align*}
whenever $L$ intersects $\R^3_a$ transversely.

A \textbf{knot} is a smooth embedding, $S^1 \hookrightarrow \R^3$. A \textbf{link} is a smooth embedding,  $\sqcup_{j=1}^k S^1 \hookrightarrow \R^3$, of finite number of copies of $S^1$.
Denote by $\pi_1$ the projection $\pi_1 : \R^3 \to \R^2$, $(x_1, y_1, x_2) \mapsto (x_1, y_1)$. The projection $\pi_1$ is sometimes referred to as the Lagrangian projection of $\R^3$.
For a generic oriented link $K$ in $\R^3$, $\pi_1(K)$ is an immersed curve with finitely many transverse double points, called \textbf{crossings}. We assume that all links, $K$, in this paper have such $\pi_1(K)$.

\begin{defn}\label{defn_lag cobordism}
A \textbf{Lagrangian cobordism}, $L$, from a link $K_1$ to a link $K_2$  is an embedded, orientable, connected,  compact, Lagrangian surface $L \subset \R^4_{[a,b]}$ for some real numbers $a<b$, such that 
\begin{itemize}
\item $L$ intersects $\R^3_a \defeq \{y_2 = a\}$ and $\R^3_b \defeq \{y_2 = b\}$ transversely, and
\item $L \cap \R^3_b = \iota_b(K_2)$, $L \cap \R^3_a = \iota_a(K_1)$ for the canonical inclusion $\iota_c : \R^3 \to \R^4$ given by $\iota_c(x_1, y_1, x_2) = (x_1, y_1, x_2, c)$ for $c \in \R$.
\end{itemize}
If such a Lagrangian cobordism exists, we say that $K_1$ {\bf undercuts} $K_2$, and denote it by by $K_1 \prec K_2$.
We write $K_1 \prec_L K_2$ if we want to specify that the specific Lagrangian $L$ gives this cobordism.

A knot $K$ is said to be \textbf{capped} by a Lagrangian if $K \prec \emptyset$, and \textbf{filled} by a Lagrangian if $\emptyset \prec K$.
\end{defn}
For a Lagrangian cobordism $L \subset \R^4_{[a,b]}$, let us denote $$\p_+ L \defeq L \cap \R^3_b \text{ and } \p_-L \defeq L \cap \R^3_a.$$ As $L$ is oriented, $L$ induces an orientation on its boundaries $\p_\pm L$. So, any Lagrangian cobordism is a Lagrangian cobordism from $\p_- L$ to $\p_+ L$.

Define \textbf{area of a knot} $K$ to be
\begin{align*}
a(K) \defeq \left|\int_{\pi_1(K)} x_1 dy_1 \right|.
\end{align*}
Note that as we take absolute value, in the case $K$ is a knot, the specified orientation does not matter. If $K$ had multiple connected components, the orientation would be important.
By Stoke's theorem, a Lagrangian cobordism from knot $K_1$ to $K_2$ cannot exists unless $a(K_1) = a(K_2)$. 

\begin{defn}\label{defn_rel exact}
Recall that the symplectic form $\omega \in H^2(\R^4, L)$ defines a map $\omega: \Pi_2(\R^4, L) \to \R$.
We call a Lagrangian cobordism \textbf{relatively exact} if the image $\omega(\Pi_2(\R^4, L))$ is $a \Z$ where $ a = a (\p_+ L ) = a(\p_- L)$. 
\end{defn}
\begin{rem}\label{rem_automatic relative exactness}
In some cases, we can conclude that a Lagrangian cobordism is relatively exact purely from topological constraints. Suppose $L$ is a Lagrangian cobordism from a knot $K_1$ to $K_2$ such that the writhes (with respect to the blackboard framing in $x_1, y_1, x_2$ coordinates) satisfy $\mathrm{wr}(K_1) = \mathrm{wr}(K_2)$. As $L$ is Lagrangian, one can show that the Euler characteristic satisfies
\begin{align*}
\chi(L) = \mathrm{wr}(K_2) - \mathrm{wr}(K_1),
\end{align*}
and hence $\chi(L) = 0$. As we know that $L$ is topologically a surface with at least two boundary components, zero Euler characteristic implies
\begin{align*}
0 = \chi(L) = 2 - 2g - b = -2g,
\end{align*}
that is, genus $g  = 0$ and $b=2$. This forces $L$ to be topologically a cylinder and $\Pi_2(\R^4, L)$ is generated by a class $[u]$ for $u$ a disk that has boundary equal to one of the boundary components of $L$. Thus, $L$ is automatically relatively exact.
\end{rem}

To study relatively exact Lagrangian cobordisms we consider related immersed Lagrangians called Lagrangian tangles (Definition \ref{defn_lag tangle}). An {\bf immersed Lagrangian} in $(\R^4, \omega)$ is the image, $L = \iota(\wt L)$, of an immersion
\begin{align*}
\iota: \wt L \to \R^4,
\end{align*}
where $\wt L$ is a $2$-manifold (possibly with boundary), such that the pull-back $\iota^* \omega \equiv 0$ vanishes. Given such an immersed Lagrangian, $L$, its {\bf transverse double points} are $q \in L$ such that there exists $p_1 \neq p_2$ in $\wt L$ with
 $\iota(p_1) = \iota(p_2) = q$, and $d\iota_{p_1} (T_{p_1} L) \oplus d \iota_{p_2} (T_{p_2} L) = T_q \R^4$. Denote the set of transverse double points of an immersed Lagrangian, $L$, by $$\Delta(L) \defeq \{q \in L |\, \#\iota^{-1} \{q\} = 2\}.$$

A Lagrangian tangle (Definition \ref{defn_lag tangle}), $L$, has two leaves near any point $q \in \Delta(L)$, that is, in a small neighbourhood of $q$, $L \setminus \{q\}$ has two connected path components. Note that, here we are using the assumption that the only singularities are transverse double points. Near each double point, for any fixed $y_2$-value, one leaf has higher $x_2$-values than the other. We refer to the former as the \textbf{higher leaf}, $L^h$, and the latter as the \textbf{lower leaf}, $L^l$, near $q$.

Lagrangian cobordisms and Lagrangian tangles are related by the addition or removal of appropriate Lagrangian collars, which we describe in the next section. 

\subsection{Lagrangian Collars}\label{sec_lag collars}
In this section we describe how we can add small Lagrangian collars to the ends of a given Lagrangian to get desired boundary conditions. We first restate some lemmas from \cite{ELST} that show that Lagrangians with equal slices can be glued smoothly. We remove the requirement that the Lagrangians are planar unknotted and include proofs that are slight alterations of those in \cite{ELST} so that we do not use the planar unknottedness hypothesis.
\begin{lem}\label{lem_standard nbhd of slice}\cite[Lemma 5.1]{ELST}
Let $L, L' \subset \R^4$ be two Lagrangians that are transverse to and agree on $\R^3_a$, for some $a \in \R$. Let
\begin{align*}
S \defeq L_a = L'_a .
\end{align*}
Then, for all $\epsilon > 0$, there exist neighborhoods $V  \subset U$ of $S$ in $L$ and a symplectic isotopy $\phi_t$ of $\R^4$ such that $\phi_t|_L$ is the identity on $S$ and on the complement of $U$, $\phi_1(U) \subset \R^4_{(a-\epsilon, a+\epsilon)}$, and $\phi_1(V)\subset L'$.
\end{lem}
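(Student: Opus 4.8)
The plan is to follow the strategy of \cite[Lemma 5.1]{ELST}, adapting the argument so that the planar-unknotted hypothesis is never invoked. First I would work locally near the slice $S = L_a = L'_a$. Since $L$ and $L'$ are both transverse to $\R^3_a$ and agree along $S$, in a Weinstein-type tubular neighborhood of $S$ inside $\R^4$ we may describe both $L$ and $L'$ as graphs of closed (in fact exact, after shrinking, since $S$ is $1$-dimensional and we can absorb the cohomology class into the fiber coordinate) $1$-forms over a common piece of $S \times (a-\epsilon, a+\epsilon)$ — more precisely, using the symplectic coordinates adapted to $\R^3_a$, write a neighborhood of $S$ in $\R^4$ as a neighborhood of the zero section in $T^*(S \times (a-\epsilon, a+\epsilon))$ (or the relevant local model incorporating the normal $y_2$-direction), so that $L$ and $L'$ become graphs $\Gamma_{dh}$ and $\Gamma_{dh'}$ of differentials of functions $h, h'$ that vanish to high order along $S = \{y_2 = a\}$.

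The key steps, in order, are: (1) set up the local symplectic model around $S$ and express $L, L'$ as graphs $dh, dh'$ with $h, h' \to 0$ along $S$; (2) choose nested neighborhoods $V \subset U$ of $S$ in $L$ small enough that, within $U$, the interpolating function $h_t = (1-t)h + t h'$ still has graph lying inside the chosen model chart and inside $\R^4_{(a-\epsilon,a+\epsilon)}$; (3) build the symplectic isotopy $\phi_t$ from the Hamiltonian flow generated by a (time-dependent) Hamiltonian that implements the fiberwise translation interpolating $\Gamma_{dh}$ to $\Gamma_{dh'}$, cut off by a bump function that is $1$ on $V$ and supported in $U$ — this is the standard trick whereby the graph of $dh_t$ is the image of the graph of $dh$ under the time-$t$ flow of a Hamiltonian reading off $h' - h$; (4) verify the three required properties: $\phi_t$ fixes $S$ pointwise (because $h' - h$ and its differential vanish along $S$, the Hamiltonian vector field vanishes there), $\phi_t$ is the identity outside $U$ (by the cutoff), $\phi_1(U) \subset \R^4_{(a-\epsilon,a+\epsilon)}$ (by the neighborhood choice in step (2)), and $\phi_1(V) \subset L'$ (because on $V$ the cutoff is $1$ so the flow carries $\Gamma_{dh}$ exactly onto $\Gamma_{dh'} = L' \cap (\text{model chart})$).

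The main obstacle I expect is step (2)–(3): ensuring that the cutoff construction produces a genuine \emph{symplectic} (Hamiltonian) isotopy of all of $\R^4$ — not merely an isotopy of $L$ — while simultaneously keeping $\phi_1(U)$ confined to the thin slab $\R^4_{(a-\epsilon,a+\epsilon)}$. The confinement is delicate because the Hamiltonian flow moves points in the fiber directions, which include directions transverse to $\R^3_a$; one must check that the $y_2$-excursions of trajectories starting in $U$ are controlled by $\|h' - h\|_{C^1}$ on $U$, hence can be made smaller than $\epsilon$ by shrinking $U$ first. Where \cite{ELST} could appeal to the planar model to make this explicit, here I would instead give the estimate directly from the local graph description: since $h, h'$ agree to infinite order (or at least to first order, which is all that is needed) along $S$, on a sufficiently small $U$ the function $h'-h$ and its derivatives are as small as desired, so the generated flow stays in any prescribed neighborhood of $S$, in particular inside $\R^4_{(a-\epsilon,a+\epsilon)}$. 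The remaining verifications — that $\phi_t|_L$ fixes $S$ and equals the identity off $U$ — are then formal consequences of the vanishing of $h'-h$ along $S$ and of the support of the cutoff, respectively.
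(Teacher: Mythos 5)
Your proposal follows essentially the same strategy as the paper: reduce to a Lagrangian-neighborhood (Weinstein) chart around $S$, express $L'$ (and trivially $L$) as a graph of an exact $1$-form, and realize the interpolation as the time-$1$ flow of a cutoff Hamiltonian, controlling the $y_2$-excursion by smallness of the generating function near $S$. This is precisely the argument of \cite[Lemma 5.1]{ELST} that the paper sets up and then cites, so the two proofs coincide in substance; your remark about "absorbing the cohomology class" is better replaced by the direct observation that the graph $1$-form pulls back to zero on $S$, which kills $[\beta]\in H^1$ of the annulus since $S$ generates $H_1$.
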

\begin{proof}
We present the argument when $S$ has only one connected component but, as the argument is local in nature, it extends to the case when $S$ has many connected components by repeating this argument for each component.

Let $L_0$ be the $(x_1, x_2)$-plane, that is, the zero-section of $\R^4 = T^* \R^2$.
By the Lagrangian neighbourhood theorem, there exists a small neighbourhood $A \subset L$ of $S$ and a symplectomorphism $\psi$ of a tubular neighbourhood $ U \subset \R^4$ of $A$ taking $A$ to $S^1 \times I \subset L_0$ for some open interval $I \subset \R$.  Let $\gamma$ be the image of $S$ under $\psi$. 

The rest of the proof is identical to that of \cite[Lemma 5.1]{ELST}.
\end{proof}
This allows us to smoothly glue two Lagrangians that have a common slice or have matching boundaries.
\begin{lem}\label{lem_gluing lagrangians smoothly}
Let $L \subset \R^4_{[a,b]}$ and $L' \subset \R^4_{[b,c]}$ be two Lagrangians that are transverse to and agree on $\R^3_b$,
\begin{align*}
S = \p_+ L = L_b = L'_b= \p_- L'.
\end{align*}
Then, for all $\epsilon > 0$, there exists a 
Lagrangian $L'' \subset \R^4_{[a,c]}$ such that
\begin{enumerate}
\item $L'' \cap \R^4_{(-\infty, b - \epsilon)} = L \cap \R^4_{(-\infty, b - \epsilon)}$, and
\item  $L′′  \cap \R^4_{(b + \epsilon, \infty)} = L′  \cap \R^4_{(b + \epsilon, \infty)}$.
\end{enumerate}
\end{lem}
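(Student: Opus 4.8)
\textbf{Proof plan for Lemma \ref{lem_gluing lagrangians smoothly}.}

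The plan is to reduce the gluing statement directly to Lemma \ref{lem_standard nbhd of slice} applied on each side of the common slice $S = \R^3_b \cap L = \R^3_b \cap L'$. First I would set up a model collar near $\R^3_b$: since $L$ and $L'$ are both transverse to $\R^3_b$ and agree there along $S$, the product Lagrangian $S \times \R$ (in suitable coordinates near $\R^3_b$, using the Lagrangian neighbourhood theorem as in the proof of the previous lemma) is transverse to and agrees with both $L$ and $L'$ on $\R^3_b$. Call this local model $L_{\mathrm{mod}}$; concretely it is the Lagrangian obtained by flowing $S$ by the (locally Hamiltonian) vector field $\partial_{y_2}$, so it is a genuine Lagrangian cylinder over $S$ inside a neighbourhood of $\R^3_b$, and it is cylindrical (invariant under small $y_2$-translations) near $b$.

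Next I would apply Lemma \ref{lem_standard nbhd of slice} twice. Applied to the pair $(L, L_{\mathrm{mod}})$ on $\R^3_b$, it gives, for the chosen $\epsilon$, a symplectic isotopy $\phi_t$ fixing $S$ and supported in a small neighbourhood $U$ of $S$ in $\R^4$ with $\phi_1(U) \subset \R^4_{(b-\epsilon, b+\epsilon)}$ and $\phi_1(V) \subset L_{\mathrm{mod}}$ for some smaller neighbourhood $V \subset U$ of $S$ in $L$. Thus the Lagrangian $\phi_1(L)$ agrees with $L$ outside $\R^4_{(b-\epsilon, b+\epsilon)}$ (in particular on $\R^4_{(-\infty, b-\epsilon)}$, giving condition (1)) and, near $\R^3_b$, coincides with the standard cylinder $L_{\mathrm{mod}}$. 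Symmetrically, applying the lemma to $(L', L_{\mathrm{mod}})$ on $\R^3_b$ produces $\phi'_1(L')$ agreeing with $L'$ on $\R^4_{(b+\epsilon, \infty)}$ (condition (2)) and coinciding with $L_{\mathrm{mod}}$ near $\R^3_b$. Now I would define
\begin{align*}
L'' \defeq \big(\phi_1(L) \cap \R^4_{(-\infty, b]}\big) \cup \big(\phi'_1(L') \cap \R^4_{[b, \infty)}\big),
\end{align*}
after possibly shrinking $\epsilon$ so that both modifications take place inside $\R^4_{(b-\epsilon,b+\epsilon)}$ and the pieces of $\phi_1(L)$, $\phi_1'(L')$ inside a smaller window $\R^4_{(b-\delta,b+\delta)}$ around $b$ literally equal $L_{\mathrm{mod}}$. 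Since both pieces agree with the fixed cylinder $L_{\mathrm{mod}}$ on a full neighbourhood of $\R^3_b$, the union is a smooth (indeed Lagrangian, since it is locally a piece of $\phi_1(L)$, of $L_{\mathrm{mod}}$, or of $\phi'_1(L')$, each Lagrangian) embedded surface in $\R^4_{[a,c]}$, and conditions (1)–(2) hold by construction.

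The main obstacle is ensuring that the two one-sided modifications are compatible in an honest neighbourhood of $\R^3_b$ rather than merely at the slice itself: Lemma \ref{lem_standard nbhd of slice} only asserts $\phi_1(V) \subset L'$ (here $\subset L_{\mathrm{mod}}$) for some neighbourhood $V$ of $S$, and a priori the induced parametrizations of this common piece by $L$ and by $L'$ could differ. This is handled by choosing the common model $L_{\mathrm{mod}}$ to be the cylinder $S \times (b-\epsilon, b+\epsilon)$ that is translation-invariant in $y_2$: then after the isotopies both $\phi_1(L)$ and $\phi'_1(L')$ contain the open piece $S \times (b-\delta, b+\delta)$ of $L_{\mathrm{mod}}$ for some $\delta \le \epsilon$, and on this overlap they are literally the same subset of $\R^4$, so no matching of parametrizations is needed — the two halves are glued along a genuine open collar. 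One should also check that the constructions on the two sides, being supported in disjoint (up to $\R^3_b$) half-neighbourhoods, do not interfere; this is immediate since $\phi_t$ is supported in $\R^4_{(b-\epsilon,b+\epsilon)}$ on the $L$-side and we only use its restriction to $y_2 \le b$, and symmetrically for the $L'$-side. A brief remark that the same argument works componentwise when $S$ is disconnected (as in the proof of Lemma \ref{lem_standard nbhd of slice}) completes the proof.
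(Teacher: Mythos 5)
Your overall plan---produce an intermediate Lagrangian $L_{\mathrm{mod}}$ on a two-sided window around $\R^3_b$ containing $S$, use Lemma~\ref{lem_standard nbhd of slice} to push $L$ and $L'$ onto $L_{\mathrm{mod}}$ near $S$, then glue along the common open piece---has the right shape, but the concrete construction of $L_{\mathrm{mod}}$ is wrong. The flow-out of $S$ by $\partial_{y_2}$ is not Lagrangian in general: for $v \in T_pS$ one has $\omega(v, \partial_{y_2}) = (dx_2 \wedge dy_2)(v, \partial_{y_2}) = dx_2(v)$, which vanishes only if $x_2$ is constant along $S$. For a slice of a generic Lagrangian cobordism, $x_2$ is not constant along $S$ (indeed Proposition~\ref{prop_lag movie} forces $\partial_\theta x_2 = \omega(\partial_t z, \partial_\theta z) \ne 0$ whenever the $\R^2$-projection of the slices is actually moving). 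The same computation rules out \emph{any} $y_2$-translation-invariant Lagrangian cylinder over $S$, so ``$S\times\R$ in suitable coordinates'' cannot be made honest here: Lagrangianness is not a coordinate artefact you can dodge. Fortunately the translation-invariance is also unnecessary for your gluing step: Lemma~\ref{lem_standard nbhd of slice} gives $\phi_1(V)\subset L_{\mathrm{mod}}$ with $\phi_1(V)$ an open $2$-dimensional subset containing $S$, so $\phi_1(L)$ and $L_{\mathrm{mod}}$ already coincide as subsets on a small enough window around $b$, and similarly for $\phi_1'(L')$.

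The cheapest valid choice of $L_{\mathrm{mod}}$ is an arbitrary Lagrangian extension of $L$ (or of $L'$) a little past $\R^3_b$, which exists by the Lagrangian neighbourhood theorem (identify a one-sided collar of $S$ in $L$ with part of a zero section and extend the zero section to the other side). With that choice one of your two applications of Lemma~\ref{lem_standard nbhd of slice} becomes vacuous, and your argument collapses to the paper's: extend both $L$ and $L'$ so that $S$ is an interior slice, apply Lemma~\ref{lem_standard nbhd of slice} once to match the extended $L$ to the extended $L'$ on a window around $b$, truncate, and glue. Extending first also sidesteps the delicacy of invoking Lemma~\ref{lem_standard nbhd of slice} at a boundary slice of $L$, where the Weinstein tubular neighbourhood is only a half-neighbourhood; the paper's extension step avoids having to address this.
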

In \cite{ELST}, they assumed that the common slice was connected to get the planarity assumption. As we do not care about planarity of the Lagrangians, we can drop this assumption.
\begin{proof}
We first extend both $L$ and $L'$ arbitrarily such that the previous boundaries $\p_+ L$ and $\p_- L'$ are interior slices and the Lagrangians are still embedded. Then we use the previous lemma to make Lagrangians $\wt L$ and $\wt L'$ such that
\begin{align*}
\wt L \cap \R^4_{(-\infty,b-\epsilon)} &= L \cap \R^4_{(-\infty,b-\epsilon)} ,\\
\wt L' \cap \R^4_{(b + \epsilon, \infty)} &= L \cap \R^4_{(b + \epsilon, \infty)},\\
\wt L \cap \R^4_{(b-\frac{\epsilon}{2} ,b+\frac{\epsilon}{2})} &= \wt L' \cap \R^4_{(b-\frac{\epsilon}{2} ,b+\frac{\epsilon}{2})}.
\end{align*}
Then we may identify $\wt L$ and $\wt L'$ to get
\begin{align*}
L'' = (\wt L \cap \R^4_{(-\infty,b]} ) \sqcup_{\wt L_b = \wt L'_b} (\wt L' \cap \R^4_{[b, \infty)} )
\end{align*}
that is smooth.
\end{proof}

\begin{figure}[h!]
\begin{center}
  \includegraphics[width = 5in, height=3in]{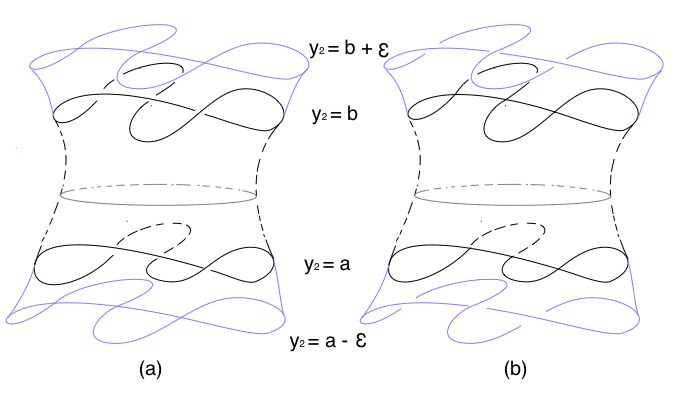}
  \end{center}
 \caption[caption]{\tabular[t]{@{}l@{}}Two types of Lagrangian collar attaching.\\ (a) Lagrangian tangle from relatively exact Lagrangian cobordism.\\ (b) Extension of Lagrangian tangle to get immersed Lagrangian with\\  transverse double points in the interior. \endtabular}
  \label{fig:lag collars}
\end{figure}
\begin{lem}\label{lem_collar_1}
Suppose we have a Lagrangian cobordism $L \subset \R^4_{[a,b]}$ (Definition \ref{defn_lag cobordism}). Given $\epsilon > 0$,
there exists a Lagrangian tangle  $L' \subset \R^4_{[a-\epsilon , b+ \epsilon]}$ (Definition \ref{defn_lag tangle}), 
such that $L = L' \cap \R^4_{[a, b]}$.
\end{lem}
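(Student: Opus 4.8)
The plan is to attach small Lagrangian collars to both ends of $L$ that have the required ``flat'' boundary conditions, and then invoke the gluing Lemma~\ref{lem_gluing lagrangians smoothly} to patch them onto $L$ while keeping the result embedded away from the new boundary. First, at the end $\p_+ L = L \cap \R^3_b$, consider the link $K_2 \defeq L_b \subset \R^3_b$ together with its Lagrangian projection $\pi_1(K_2)$, which is an immersed curve with finitely many transverse double points. I would build an explicit Lagrangian ``cylinder-with-corners'' $C_+ \subset \R^4_{[b, b+\epsilon]}$ that agrees with $L$ near $\R^3_b$ (so that it can be glued there by Lemma~\ref{lem_standard nbhd of slice}/Lemma~\ref{lem_gluing lagrangians smoothly}), and whose other boundary $C_+ \cap \R^3_{b+\epsilon}$ lies in $\R^2_{b+\epsilon} = \{x_2 = 0, y_2 = b+\epsilon\}$. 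Near the crossings of $\pi_1(K_2)$ we resolve the two sheets into transverse double points in $\R^2_{b+\epsilon}$; everywhere else the collar is just the product of a curve with an interval, pushed so that $x_2 \to 0$. This is exactly the ``Lagrangian movie'' construction alluded to after Definition~\ref{defn_lag tangle}: one prescribes a path of immersed curves in $\R^2$, from $\pi_1(K_2)$ to the curve with $x_2$-coordinate identically zero, together with the matching $x_2$-data, and checks that the trace sweeps out a Lagrangian (the exactness bookkeeping along the movie is what makes $\omega$ vanish). The symmetric construction at the end $\p_- L = L \cap \R^3_a$ produces $C_- \subset \R^4_{[a-\epsilon, a]}$.

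Next I would define $L' \defeq C_- \cup L \cup C_+$ and verify the six bullet points of Definition~\ref{defn_lag tangle}. Compactness, connectedness, orientability and the inclusion $L' \subset \R^4_{[a-\epsilon, b+\epsilon]}$ are immediate from the construction and from the corresponding properties of $L$. Transversality of $L'$ to $\R^3_{a-\epsilon} \cup \R^3_{b+\epsilon}$ and to $\R^3_t$ for $t$ near the new endpoints is arranged by making the collars genuinely cylindrical in the $y_2$-direction near those levels. Flatness of $\p L'$ holds because the collars were built so that $C_\pm \cap \R^3_{a-\epsilon}, C_\pm \cap \R^3_{b+\epsilon} \subset \R^2_{a-\epsilon} \cup \R^2_{b+\epsilon}$. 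The only self-intersections are the transverse double points introduced at the crossings of $\pi_1(K_2)$ and $\pi_1(K_1)$, and these lie on $\p L'$ by construction; the interior of $L'$ is embedded because $L$ is embedded and the collars are embedded in their slabs. Finally, $L = L' \cap \R^4_{[a,b]}$ holds because the collars are attached strictly outside $\R^4_{[a,b]}$ (using the $\epsilon$-room in Lemma~\ref{lem_gluing lagrangians smoothly} to confine the modifications to $\R^4_{(b-\epsilon', b+\epsilon)}$ and $\R^4_{(a-\epsilon, a+\epsilon')}$, then shrinking).

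For relative exactness of $L' \cap \R^4_{[a-\epsilon+\delta,\, b+\epsilon-\delta]}$ for every $\delta>0$: the collars are chosen to be \emph{exact} Lagrangian cobordisms (this is built into the movie, as the primitive $x_1\,dy_1$ can be controlled along the interpolation) and they do not change $\pi_1$ of the Lagrangian, so inclusion induces a surjection $\Pi_2(\R^4, L) \to \Pi_2(\R^4, L')$ after the gluing, and a class in $\Pi_2(\R^4, L')$ pairs with $\omega$ the same way its preimage does against the relatively exact $L$. Here one also uses that $a(\p_\pm L)$ is unchanged, since the collars preserve $\left|\int x_1\,dy_1\right|$ on the moving slice up to the allowed $\Z$-ambiguity; combined with the hypothesis that $L$ itself is relatively exact, the image $\omega(\Pi_2(\R^4, L'\cap \R^4_{[\cdot,\cdot]}))$ stays in $a\Z$.

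The main obstacle is the explicit collar construction: producing, near each crossing, a Lagrangian piece that interpolates between the two transverse sheets of $L$ near $\R^3_b$ and a flat transverse double point in $\R^2_{b+\epsilon}$, while (i) keeping the piece Lagrangian, (ii) keeping it embedded in its interior, and (iii) not disturbing the area/primitive bookkeeping so that exactness (hence relative exactness) survives. This is a local model computation; I expect to write it down in coordinates adapted to a crossing, as a graph over a Lagrangian plane, and verify the closedness/exactness of the relevant $1$-form by hand, citing the movie formalism of \cite{Sauvaget2004} and \cite{lin} for the fact that such traces are Lagrangian. Everything else is a routine patching argument using the already-established gluing lemmas.
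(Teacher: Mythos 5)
Your overall strategy matches the paper's: build small Lagrangian collars at $\p_\pm L$ using the Lagrangian movie formalism, glue them on, and check the conditions of Definition~\ref{defn_lag tangle}. You correctly identify the relevant tools (Lemmas~\ref{lem_standard nbhd of slice}, \ref{lem_gluing lagrangians smoothly}, \ref{lem: lin lagrangian}) and your relative-exactness argument (the collars do not change the homotopy type of the Lagrangian, so $\Pi_2(\R^4, L) \to \Pi_2(\R^4, L')$ and the $\omega$-values are preserved) is essentially the paper's deformation-retract argument. That part is fine.

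However, there is a genuine gap, and it is in the step you yourself flag as ``the main obstacle.'' Your sketch of the collar --- ``near the crossings we resolve the two sheets into transverse double points; everywhere else the collar is just the product of a curve with an interval, pushed so that $x_2 \to 0$'' --- would not produce a Lagrangian. If the $\pi_1$-shadow $z(\theta,t)$ of the moving slice stays fixed in $t$ away from crossings, then Equation~(\ref{eqn_change in x_2}) forces $\frac{\partial x_2}{\partial\theta} = 0$ there, i.e.\ $x_2$ constant along each slice, which is incompatible with the generic starting data $x_2(\theta,b)$ of $\p_+ L$. So the shadow \emph{must} move everywhere, not just near crossings, and the movement must be precisely the one that integrates, via Lemma~\ref{lem: lin lagrangian}, to $x_2 \equiv 0$ at $t = b+\epsilon$. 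You cannot prescribe the path of curves and the terminal $x_2$-data independently. You note this implicitly (``together with the matching $x_2$-data'') but leave the construction entirely to a hoped-for local coordinate computation near crossings --- and that local picture misleads, because in the paper's construction there is nothing local about the crossings: they are only special at the very end, when the sheets come together.

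The paper's resolution is a single global construction that you do not arrive at: take the height function $f(\theta) = x_2(\theta)$ on $\p_+ L$, extend it to a time-dependent Hamiltonian $f_t = \beta(t) f$ on an annular neighborhood of $\pi_1(\p_+ L)$ with a cutoff $\beta$ satisfying $\beta(1) = 0$ and $\beta(t) \neq 0$ for $t < 1$, and let the collar be $G(\theta, t) = (g\circ\phi_t(\theta),\, f_t(\phi_t(\theta)),\, t)$ where $\phi_t$ is the Hamiltonian flow and $g$ the immersion realizing the shadow. Lagrangianity is a short direct computation; the shadow moves exactly as much as it must; $x_2 \to 0$ because $\beta(1) = 0$; and --- crucially for your glossed-over embeddedness claim --- the interior is embedded because $\beta(t) \neq 0$ for $t < 1$ keeps the two sheets separated in $x_2$ until the very last slice, where the double points appear on $\p L'$ as required. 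Your proposal does not contain this mechanism and does not offer a substitute for it, so as written it does not yet constitute a proof.
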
  

\begin{proof}
We attach small immersed Lagrangian collars, that is, images of immersions of $\sqcup^k S^1 \times [0, \eps]$ into $\R^4$ , at both $\partial_+ L$ and $\partial_- L$ to obtain a Lagrangian tangle.
We only describe the construction near $\p_+ L$, the case for $\partial_- L$ is exactly the same.

Let us consider the case when $\partial_+ L$ has one connected component. If there are multiple components, we can treat each of them individually with some care near intersections in $\pi_1(\p_+ L) \subset \R^2$.  We assume, for simplicity of notation, that $\partial_+ L$ lies in $\R^3_0$, that is, $b=0$.

Let $\pi_1: \R^3_0 \to \R^2$ be the projection $(x_1, y_1, x_2,0) \mapsto (x_1, y_1)$. Then, by our assumption about genericity of the link $\p_+ L$, $\pi_1 (\partial_+ L)$ is an immersed $S^1$. Let us fix an immersion $g: S^1 = \{(x,y) \in \R^2|x^2 + y^2 = 1\} \to \R^2$, such that $\im (g) = \pi_1(\p_+ L)$.
We can extend $g$ to an immersion (again called) $g: A \defeq S^1 \times [-\delta, \delta] \to \R^2$ for some small $\delta > 0$. Using this immersion, we pull back the standard symplectic form on $\R^2$ to get a symplectic form $\omega_g \defeq g^*(dx_1 \wedge dy_1)$ on $A$.

The height of points on $\partial_+ L$ or the value of the $x_2$-coordinate gives us a Hamiltonian $f$ on $g^{-1}(\pi_1(\partial_+ L))$, namely, $f: S^1 \times \{0\} \to \R$ is such that $(g,f, 0): S^1 \to \R^4$ gives a parametrization of $\p_+ L$. We extend $f$ to a Hamiltonian $f :  A \to \R$ that is supported in a neighbourhood of $S^1 \times \{0\}$ and away from $S^1 \times \{ -\delta\} \cup S^1 \times \{ + \delta\} = \p A$. 

Consider a smooth function cut off function $\beta: [0,1] \to [0,1]$ such that 
\begin{itemize}
\item $\beta(1) = 0$,
\item $\beta(t) = 1$ on $[0, \mu)$ for $0 < \mu \ll 1$,
\item $\beta(t) \neq 0$ for $t \neq 1$.
\end{itemize}
Now, consider the time-dependent Hamiltonian $f_t$, $t \in [0,1]$, on $ A$ given by
\begin{align*}
f_t(\theta, s) \defeq \beta(t)f(\theta, s), \text{ for } (\theta,s) \in S^1 \times [-\delta, \delta]= A.
\end{align*}
So, we have $ f_0(\theta, s) = f(\theta, s)$ and $ f_1 \equiv 0$.

On $A$, we get a Hamiltonian vector field $X_t$ that is the symplectic dual to $df_t$. Namely, for any point $p \in A$ and vector $v \in T_p A$ 
\begin{align*}
(df_t)_p(v) = \omega_g(X_t, v).
\end{align*}  Let $\phi_t$ be the flow of the vector field $X_t$, that is, $\phi : A \times [0,1] \to A$, $((\theta, s),t) \mapsto \phi_t(\theta,s)$ such that $\frac{d}{dt} \phi_t(\theta, s) = X_t \circ \phi_t (\theta,s)$ for all $(\theta,s) \in A$, and $\phi_0 = \rm{id}$. We now look at only $S^1 \times \{0\} \subset A$ and suppress the $s = 0$ in our notation.

Consider the map
\begin{align*}
F : S^1 \times [0,1]& \to A \times \R^2\\
(\theta, t) & \mapsto (\phi_t (\theta), f_t(\phi_t(\theta)), t).
\end{align*}
If we put the product symplectic form on $A \times \R^2$ given by $\omega_g \oplus dx \wedge dy$, $F$ is a Lagrangian embedding. Indeed, we can check
\begin{align*}
F^*(\omega_g \oplus dx \wedge dy) & = \phi_t^* \omega_g + \frac{\partial (f_t \circ \phi_t)}{\partial \theta} d\theta \wedge dt\\
& = \omega_g \left(\frac{\partial \phi_t }{\partial \theta}, \frac{\partial \phi_t}{\partial t}\right)d\theta \wedge dt + df \frac{\partial \phi_t}{\partial \theta} d\theta \wedge dt\\
& = - \omega_g \left(X_t, \frac{\partial \phi_t }{\partial \theta} \right)d\theta \wedge dt + \omega_g \left(X_t, \frac{\partial \phi_t }{\partial \theta} \right) d\theta \wedge dt\\
& = 0.
\end{align*}
$F$ is injective as $\phi_t$ is a diffeomorphism on $S^1$ for each $t$. $F$ is an immersion because, in local coordinates,
\begin{align*}
dF = \begin{pmatrix}
\frac{\partial \phi^1}{\partial \theta} & \frac{\partial \phi^1}{\partial t}\\
\frac{\partial \phi^2}{\partial \theta} & \frac{\partial \phi^2}{\partial t}\\
(df_t)_{\phi_t(\theta)} \left( \frac{\partial \phi}{\partial \theta}\right)& (df_t)_{\phi_t(\theta)} \left( \frac{\partial \phi}{\partial t}\right)\\
0& 1
\end{pmatrix},
\end{align*}
which is rank $2$ as $\phi_t$ is a diffeomorphism implies $\frac{\partial \phi_t}{\partial \theta} (\theta)= \begin{pmatrix}
\frac{\partial \phi^1}{\partial \theta}(t, \theta), & \frac{\partial \phi^2}{\partial \theta}(t, \theta)
\end{pmatrix}$ is non-zero.

Now, we can compose $F$ with the immersion $g: S^1 \times \{0\} \to \R^2$ to get 
\begin{align*}
G: S^1 \times [0,1] & \to \R^2 \times \R^2\\
(\theta,t)&  \mapsto (g \circ \phi_t(\theta), f_t(\phi_t(\theta)), t).
\end{align*}
As $g$ is an immersion that preserves the symplectic form, $G$ is  a Lagrangian immersion. Note that $G(\theta, 0) = (g(\theta), f(\theta), 0)$ is exactly a parametrization of $\partial_+ L$. So, we can ``attach" this collar to our original Lagrangian, $L$. We get smoothness from our choice of $\beta$. This gives us the required $L'$ with $\epsilon = 1$. By changing the time interval of the Hamiltonian $f$ to $[0, \epsilon]$ and taking $f$ such that $f_\epsilon \equiv 0$, we can attach a collar of height $\epsilon$ for any $\epsilon > 0$. 

We lose injectivity only at $t=1$ as $f_1 \equiv 0$. Indeed, as $\partial_+ L$ is embedded, $f_0$ is injective in a neighbourhood of the points where $g$ is not injective. By taking the support of $f_t$ to be a very small neighbourhood of $S^1$ in $A$, we can ensure that $\phi_t$ is close to identity. This will guarantee that $G$ is injective as long as $\beta(t) \neq 0$, that is, for $t \neq 1$. As $g^{-1}(p)$ for any point $p\in \R^2$ is at most two points, the same holds for $G^{-1} (q)$ for any $q \in \R^4$. Thus, $L'$ is embedded on the interior and has double points on the boundary.

$L'$ intersects itself transversely. Indeed, this follows if we write out $dG$ in local coordinates and use the fact that $g$ was an immersion and therefore, $dg$ is injective and $X_t$ is the symplectic dual to $df_t$ and the symplectic form is non-degenerate.
$L'$ has flat boundary as $f_1 \equiv 0$. 

It remains to check whether $L' \subset \R^4_{[a,b]}$ is relatively exact away from its boundaries. Note that $L' \cap \R^4_{[a, b+ \epsilon -\delta]}$ deformation retracts to $L$ for all $\epsilon > \delta > 0$ as we introduce new self-intersection points only at $y_2 = b+\epsilon$. So, relative exactness of $L$ implies relative exactness of $L'$.

\end{proof}

\begin{rem}\label{rem_standard collar}
Suppose we have a Lagrangian cobordism $L \subset \R^4_{[a,b]}$. We may assume that near the boundaries $L$ is standard in the following sense: There exists $\epsilon > 0$ such that for $t \in [a, a+ \epsilon]$ the crossings of $\pi_1(L_t)$ in $\R^2$ remain constant. The same is true for $t \in [b-\epsilon, b]$. We can assume this because we may construct a small Lagrangian collar $L^+ \subset \R^4_{[b-2\epsilon, b+ \epsilon]}$ that has the following properties:
\begin{itemize}
\item $L^+_b = L_b = \p_+ L$;
\item the double points of $\pi_1(L^+_t)$ remain constant for $t \in [b-\epsilon, b]$.
\end{itemize}
We can construct such $L^+$ by repeating the above type of constructions while taking a Hamiltonian on $g^{-1}(\pi_1 (\p_+ L))$ that has partial derivative $\frac{\p H}{\p \theta} = 0$ at the pre-images of the double points of $\pi_1(\p_+ L)$. Then, using Lemma \ref{lem_standard nbhd of slice} to make the neighbourhood of $\p_+ L$ in $L$ equal to the neighbourhood of $\p_+ L$ in $L^+$ we get a standard neighbourhood without changing $\p_+ L$. An alternate approach to construct $L^+$ is to use the Lagrangian moves given in Section \ref{sec_moves}.
\end{rem}

\begin{rem}\label{lem_collar_2}
Given a Lagrangian tangle $L \subset \{a \leq y_2 \leq b\}$ and $\epsilon > 0$, there exists an immersed Lagrangian $L' \subset \{a - \epsilon \leq y_2 \leq b + \epsilon\}$ such that $L = L' \cap \R^4_{[a , b]}$, and all the points of singularities of $L'$ are transverse double points away from $\p L'$. Further, the set of double points of $L'$ is equal to that of $L$, $\Delta(L) = \Delta(L')$.

The construction of $L'$ is practically the same as of Lemma \ref{lem_collar_1}, but the initial height function or Hamiltonian is the constant function $0$ and we take a time-dependent Hamiltonian satisfying $f_t \neq 0$ for $t > 0$ in a neighbourhood of $S^1$ in $A$.
\end{rem}

\subsection{Lagrangian Movies}\label{sec_lag movies}

In this section, we include some lemmas about Lagrangian ``movies," which are known popularly, for completeness. The term movies appears in other related works like \cite{Sauvaget2004} and \cite{lin}.
A \textbf{Lagrangian movie} is a convenient way of visualizing a Lagrangian via a family of slices of the Lagrangian. A family of slices fit together to give a Lagrangian provided they satisfy the partial differential equation (\ref{eqn_change in x_2}).  
\begin{prop}\label{prop_lag movie}
A generic Lagrangian surface $L \subset \R^4$
parametrized by
\begin{align} \label{eqn_lagrangian movie parametrization}
r(\theta, t) = (x_1(\theta,t), y_1 (\theta,t), x_2(\theta,t), t),
\end{align}
where $\theta \in S^1$, $t \in \R$, satisfies
\begin{align}\label{eqn_change in x_2}
\frac{\p x_2}{\p \theta} (\theta,t) = \omega \left( \frac{\p z}{\p t}, \frac{\p z}{\p \theta} \right) \text{ for } z(\theta,t) = (x_1(\theta,t), y_1(\theta,t)).
\end{align}

Conversely, if there is a family of knots parametrized by 
\begin{align*}
(z(\theta,t), x_2(\theta,t)) = (x_1(\theta,t), y_1 (\theta,t), x_2(\theta,t))
\end{align*} (with parameter $t$) in $\R^3$ satisfying Condition \ref{eqn_change in x_2} above, then
\begin{align*}
r(\theta,t) = (x_1(\theta,t), y_1 (\theta,t), x_2(\theta,t), t)
\end{align*}
gives a Lagrangian surface in $\R^4$.
\end{prop}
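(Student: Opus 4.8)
The statement to prove is Proposition~\ref{prop_lag movie}, which is a direct computation characterizing when a family of slices assembles into a Lagrangian surface. The plan is to reduce everything to the single condition $r^*\omega \equiv 0$, written out in the coordinates $(\theta, t)$.

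\medskip

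First I would set up notation: write $r(\theta,t) = (x_1, y_1, x_2, t)$ with all of $x_1, x_2$, $y_1$ functions of $(\theta, t)$, and compute the two tangent vectors $\partial_\theta r = (\partial_\theta x_1, \partial_\theta y_1, \partial_\theta x_2, 0)$ and $\partial_t r = (\partial_t x_1, \partial_t y_1, \partial_t x_2, 1)$. Since $L$ is $2$-dimensional, $\omega|_{TL} \equiv 0$ is equivalent to the single scalar equation $\omega(\partial_\theta r, \partial_t r) = 0$. Using $\omega = dx_1 \wedge dy_1 + dx_2 \wedge dy_2$ and the fact that the $y_2$-coordinate of $r$ is exactly $t$ (so $dy_2$ pairs with $\partial_t r$ to give $1$ and with $\partial_\theta r$ to give $0$), I would expand:
\begin{align*}
\omega(\partial_\theta r, \partial_t r) &= \left(\partial_\theta x_1 \, \partial_t y_1 - \partial_\theta y_1 \, \partial_t x_1\right) + \left(\partial_\theta x_2 \cdot 1 - 0 \cdot \partial_t x_2\right)\\
&= \partial_\theta x_2 - \omega\!\left(\frac{\partial z}{\partial t}, \frac{\partial z}{\partial \theta}\right),
\end{align*}
where $z = (x_1, y_1)$ and I use that $\omega\!\left(\frac{\partial z}{\partial t}, \frac{\partial z}{\partial \theta}\right) = \partial_t x_1 \, \partial_\theta y_1 - \partial_t y_1 \, \partial_\theta x_1 = -\left(\partial_\theta x_1 \, \partial_t y_1 - \partial_\theta y_1 \, \partial_t x_1\right)$. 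Setting this to zero gives exactly Condition~(\ref{eqn_change in x_2}), so $L$ Lagrangian $\iff$ (\ref{eqn_change in x_2}) holds at every $(\theta, t)$. This proves both directions simultaneously: the forward direction is the "only if," and the converse is the "if," once one observes that the parametrization $r(\theta, t)$ of a family of knots is automatically an immersion of a surface (the last coordinate being $t$ guarantees $\partial_t r \neq 0$ and independence from $\partial_\theta r$, which is nonzero since each slice is an immersed curve), so $r$ indeed describes a surface on which $\omega$ vanishes.

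\medskip

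There is essentially no hard step here — the only things to be slightly careful about are sign conventions (the order of arguments in $\omega\!\left(\frac{\partial z}{\partial t}, \frac{\partial z}{\partial \theta}\right)$ versus $\partial_\theta x_2$, which is why the statement writes $\frac{\partial z}{\partial t}$ first) and the meaning of "generic": genericity is only used to ensure the slices are immersed curves with transverse double points so that $r$ parametrizes an honest (immersed) surface and the tangent-space computation is valid. I would state explicitly at the start that, since $\omega$ is a $2$-form and $TL$ is $2$-dimensional, vanishing on the pair $(\partial_\theta r, \partial_t r)$ is the whole content of being Lagrangian, and that no integrability or closedness issue arises because we are checking a pointwise linear-algebra condition on a given surface. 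The main (very minor) obstacle is simply bookkeeping the antisymmetry of $\omega$ correctly so the stated PDE comes out with the right sign.
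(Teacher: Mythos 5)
Your proof is correct and takes essentially the same approach as the paper: the paper computes the pullback $r^*\omega$ in full as a $2$-form and reads off the coefficient of $d\theta\wedge dt$, whereas you evaluate $\omega(\partial_\theta r,\partial_t r)$ directly, which is the same scalar quantity. The sign bookkeeping with the antisymmetry of $\omega$ matches the paper's, and your added remarks on why $r$ is automatically an immersion and on the role of genericity are harmless elaborations beyond what the paper writes.
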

\begin{rem}
For any $t \in \R$, a tangent to $\pi_1(L \cap \R^3_t)$ is given by $(\frac{\p x_1}{\p \theta}, \frac{\p y_1}{\p \theta})$ and so, the normal $\eta = \ii \frac{\p}{\p \theta}(z)$ is given by $(\frac{\p y_1}{\p \theta}, - \frac{\p x_1}{\p \theta})$.
Then, the right hand side of (\ref{eqn_change in x_2}) is equal to
$
\langle\frac{\p z}{\p t}, \eta\rangle.
$
\end{rem}
\begin{proof}
If we pull back the form $\omega$ on $\R^4$ via $r$, we get
\begin{align*}
r^* \omega & = r^* (dx_1 \wedge dy_1 + dx_2 \wedge dy_2)\\
& = \left( \frac{\p x_1}{\p \theta} d\theta + \frac{\p x_1}{\p t} dt \right) \wedge \left( \frac{\p y_1}{\p \theta} d\theta + \frac{\p y_1}{\p t} dt \right) + \left( \frac{\p x_2}{\p \theta} d\theta + \frac{\p x_2}{\p t} dt \right) \wedge dt\\
& = \left( \frac{\p x_1}{\p \theta} \frac{\p y_1}{\p t}  -   \frac{\p y_1}{\p \theta}\frac{\p x_1}{\p t}  + \frac{\p x_2}{\p \theta} \right)d\theta \wedge dt\\
& =   \left( \omega\left(\frac{\p z}{\p \theta}, \frac{\p z}{\p t}\right) + \frac{\p x_2}{\p \theta} (\theta,t) \right) d\theta \wedge dt.
\end{align*}
Therefore, $r^* \omega \equiv 0$ if and only if Condition (\ref{eqn_change in x_2}) holds.
\end{proof}

For a Lagrangian, $L$, parametrized by $r(\theta,t)$ as above, 
fix a $\theta_0 \in S^1$ and define $$\Delta x_2(\theta,t) = x_2(\theta,t) - x_2(\theta_0, t),$$ and
\begin{align*}
\area(\theta,t) \defeq \frac{1}{2} \int_{\theta_0}^\theta \left(x \frac{\p y}{\p \theta}  - y \frac{\p x}{\p \theta} \right)d\theta = \frac{1}{2} \int_{\theta_0}^\theta \omega\left(z, \frac{\p z}{\p \theta} \right)d\theta = \frac{1}{2} \int_{\theta_0}^\theta \inner{z(\theta,t)}{\eta(\theta,t)} d\theta.
\end{align*}
We call this quantity ``area" even though the arc over which we integrate, namely, $\pi_1(L \cap \R^3_t)$ from $z(\theta, t)$ to $z(\theta_0, t)$ does not bound a bounded region unless $z(\theta_0, t) = z(\theta, t)$. When $z(\theta_0, t) = z(\theta, t)$, then $z_t|_{[\theta_0, \theta]}$ defines the boundary of a bounded region and $\area(\theta,t)$ actually measures the area of that region.
\begin{cor}\label{lem_derivative of area}
Given a Lagrangian in $\R^4$ parametrized by 
\begin{align*}
r(\theta,t) = (x_1(\theta,t), y_1 (\theta,t), x_2(\theta,t), t), \quad (\theta, t) \in S^1 \times \R,
\end{align*}
suppose that for $\theta_0, \theta_1 \in S^1$ and $t \in (c,c+\epsilon) \subset \R$, \begin{align}\label{eqn_intersection does not move}
(x_1(\theta_1,t), y_1(\theta_1,t)) = (x_1(\theta_0,t), y_1(\theta_0,t)).
\end{align}
Then, for $\Delta x_2(\theta, t)$ and area$(\theta, t)$ defined with reference point $\theta_0$ as above,
\begin{align*}
\Delta x_2(\theta_1,t) = \frac{\p}{\p t} \area(\theta_1,t).
\end{align*}
\end{cor}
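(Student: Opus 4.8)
The plan is to differentiate $\area(\theta_1,t)$ with respect to $t$ and show that all the terms except $\Delta x_2(\theta_1,t)$ cancel, using the Lagrangian movie equation (\ref{eqn_change in x_2}) and the hypothesis (\ref{eqn_intersection does not move}) that the two chosen points on the slice coincide in the $\pi_1$-projection and stay coincident for $t$ in a small interval. Concretely, I would write
\[
\frac{\p}{\p t}\area(\theta_1,t) = \frac{1}{2}\frac{\p}{\p t}\int_{\theta_0}^{\theta_1}\omega\!\left(z,\frac{\p z}{\p \theta}\right)d\theta,
\]
and differentiate under the integral sign (justified by smoothness of $r$). Using bilinearity of $\omega$, the integrand's $t$-derivative is $\omega(z_t, z_\theta) + \omega(z, z_{\theta t})$.

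The key step is to recognize the integrand as a total $\theta$-derivative plus the Lagrangian term. Indeed $\frac{\p}{\p\theta}\,\omega(z, z_t) = \omega(z_\theta, z_t) + \omega(z, z_{t\theta})$, so $\omega(z, z_{\theta t}) = \frac{\p}{\p\theta}\omega(z, z_t) - \omega(z_\theta, z_t) = \frac{\p}{\p\theta}\omega(z, z_t) + \omega(z_t, z_\theta)$. Substituting,
\[
\omega(z_t, z_\theta) + \omega(z, z_{\theta t}) = 2\,\omega(z_t, z_\theta) + \frac{\p}{\p\theta}\omega(z, z_t).
\]
Therefore
\[
\frac{\p}{\p t}\area(\theta_1,t) = \int_{\theta_0}^{\theta_1}\omega\!\left(\frac{\p z}{\p t},\frac{\p z}{\p \theta}\right)d\theta + \frac{1}{2}\Big[\omega(z, z_t)\Big]_{\theta_0}^{\theta_1}.
\]
By the Lagrangian movie equation (\ref{eqn_change in x_2}), $\omega(z_t, z_\theta) = \frac{\p x_2}{\p\theta}$, so the integral is $x_2(\theta_1,t) - x_2(\theta_0,t) = \Delta x_2(\theta_1,t)$. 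It remains to show the boundary term vanishes: by hypothesis (\ref{eqn_intersection does not move}), $z(\theta_1,t) = z(\theta_0,t)$ for $t$ in the interval, and differentiating this identity in $t$ gives $z_t(\theta_1,t) = z_t(\theta_0,t)$; hence $\omega(z(\theta_1,t), z_t(\theta_1,t)) = \omega(z(\theta_0,t), z_t(\theta_0,t))$ and the bracket is zero. This yields $\frac{\p}{\p t}\area(\theta_1,t) = \Delta x_2(\theta_1,t)$.

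The only mild subtlety — and the place I expect to spend a sentence of care — is the bookkeeping of signs and argument order in $\omega$, since the definition of $\area$ uses $\omega(z, z_\theta)$ while (\ref{eqn_change in x_2}) is stated as $\omega(z_t, z_\theta)$; I must make sure the factor of $\tfrac12$ and the antisymmetry of $\omega$ are tracked consistently so that exactly one copy of $\int \omega(z_t, z_\theta)\,d\theta$ survives rather than zero or two. Everything else is a routine application of differentiation under the integral sign and integration by parts in $\theta$, together with the elementary observation that a coincidence of points persisting in $t$ forces a coincidence of their $t$-velocities.
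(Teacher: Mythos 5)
Your proof is correct and takes essentially the same route as the paper: differentiate under the integral sign, integrate by parts in $\theta$, identify the surviving integral via the Lagrangian movie equation (\ref{eqn_change in x_2}), and kill the boundary term by differentiating the identity (\ref{eqn_intersection does not move}) in $t$. The paper writes the computation in terms of $\langle\cdot,\eta\rangle$ rather than $\omega(\cdot, z_\theta)$, but since $\langle u, \eta\rangle = \omega(u, z_\theta)$ this is only a notational difference.
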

\begin{proof}
Note that
\begin{align*}
\frac{\p}{\p t} (2 \, \area(\theta,t)) & = \int_{\theta_0}^{\theta_1} \innerb{\frac{\p}{\p t} z(\theta,t)}{\eta(\theta,t)}d\theta + \int_{\theta_0}^{\theta_1} \innerb{ z(\theta,t)}{\frac{\p}{\p t} \eta(\theta,t)} d\theta\\
& = \int_{\theta_0}^{\theta_1} \innerb{\frac{\p}{\p t} z(\theta,t)}{\eta(\theta,t)}d\theta + \int_{\theta_0}^{\theta_1} \left( x \frac{\p^2 y}{\p t \p \theta} - y \frac{\p^2 x}{\p t \p \theta} \right) d\theta\\
& = \int_{\theta_0}^{\theta_1} \innerb{\frac{\p}{\p t} z(\theta,t)}{\eta(\theta,t)}d\theta + \left[ x \frac{\p y}{
\p t} - y\frac{\p x}{\p t}\right]_{\theta_0}^{\theta_1} + \int_{\theta_0}^{\theta_1} \innerb{\frac{\p}{\p t} z(\theta,t)}{\eta(\theta,t)} \\
& = 2\int_{\theta_0}^{\theta_1} \frac{\p x_2 }{\p \theta}(\theta,t) d\theta + 0 = 2 \Delta x_2(\theta_1, t), 
\end{align*}
where the middle term is zero using the assumption, and we use integration by parts in the earlier step.
\end{proof}
If we do not have assumption (\ref{eqn_intersection does not move}), we can still put bounds on the area change by assuming that the intersections points do not move much. We would have another term that depended on the derivative $\frac{\p \ol\theta}{\p t}$ where $\ol \theta(t)$ is such that
\begin{align*}
(x_1, y_1)(\ol \theta(t),t) = (x_1, y_1)(\theta_0,t)
\end{align*} while we are assuming that one of the $S^1$-coordinates of the intersection points, namely $\theta_0$, does not move.  
This shows that by controlling the height of Lagrangian collars in Lemma \ref{lem_collar_1}, that is, the range of $y_2$-values it lies in, we may control the total change in areas bound by sectors of slices.

The above discussion leads to the following
lemma that has previously appeared in \cite{lin} and \cite{Sauvaget2004}. It was used to prove the existence of combinatorial moves on Lagrangian diagram in \cite{lin}. We will be using it to show the existence of enriched knot diagram moves, which are truly only a rephrasing of Lin's Lagrangian moves. 
\begin{lem}\label{lem: lin lagrangian }\cite[Lemma 2]{lin}
Suppose we are given a smooth map
\begin{align*}
z( \theta, t) = (x(\theta, t), y(\theta, t)):  S^1 \times [0,T] \to \R^2
\end{align*}
such that $z(-, t) : S^1 \to \R^2$ is an immersion with total signed area $C$ fixed with respect to $t$. Suppose additionally, we are given $h(-, 0): S^1 \to \R$. Then there exists $h:  S^1 \times [0,T] \to \R$ given by
\begin{align*}
h(\theta, t) = h(\theta, 0) + \int\limits_0^t \omega\left( \frac{\p z}{\p t}, \frac{\p z}{\p \theta}\right)dt,
\end{align*}
extending $h(-, 0)$ such that
\begin{align*}
 \phi: S^1 \times [0,T]  \to \R^4,\quad
 (\theta, t) \mapsto (x(\theta, t), y(\theta, t), h(\theta, t),t),
\end{align*}
is a Lagrangian immersion. Further, it is an embedding if and only if whenever $$z(\theta,t) = z(\theta', t), \,\, h(\theta,t) \neq h(\theta', t).$$
\end{lem}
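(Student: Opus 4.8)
The plan is to recognize this as the Lagrangian movie correspondence, Proposition~\ref{prop_lag movie}, dressed up with an elementary injectivity argument. The map $\phi$ is exactly a parametrization of the form (\ref{eqn_lagrangian movie parametrization}) with $x_2(\theta,t)=h(\theta,t)$, so by Proposition~\ref{prop_lag movie} its image is a Lagrangian surface precisely when $h$ solves the movie equation (\ref{eqn_change in x_2}), namely $\frac{\p h}{\p\theta}(\theta,t)=\omega\left(\frac{\p z}{\p t},\frac{\p z}{\p\theta}\right)(\theta,t)$. The proof then splits into three checks: (i) the prescribed $h$ is a well-defined smooth function on $S^1\times[0,T]$ that restricts to $h(-,0)$ at $t=0$ and solves (\ref{eqn_change in x_2}); (ii) $\phi$ is an immersion; (iii) the displayed condition is equivalent to injectivity of $\phi$.

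For (i), the one delicate point is single-valuedness in $\theta$, and this is exactly where the hypothesis that the signed area $C=\frac{1}{2}\oint_{S^1}\omega(z,\p_\theta z)\,d\theta$ is independent of $t$ enters. Differentiating $C$ in $t$ and integrating by parts in $\theta$ (the same computation as in the proof of Corollary~\ref{lem_derivative of area}, with the boundary term now vanishing by periodicity) gives $\frac{dC}{dt}=\oint_{S^1}\omega(\p_t z,\p_\theta z)\,d\theta$, so the hypothesis says precisely that $\oint_{S^1}\omega(\p_t z,\p_\theta z)\,d\theta=0$ for every $t$; this is exactly the condition for the $\theta$-antiderivative of $\omega(\p_t z,\p_\theta z)$ to close up around the circle, and the remaining constant of integration at each time is fixed by requiring agreement with $h(-,0)$ at $t=0$, which is the content of the displayed formula for $h$. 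Granting this, a direct computation of $\phi^{*}\omega$, identical to the one in the proof of Proposition~\ref{prop_lag movie}, gives $\phi^{*}\omega\equiv 0$.

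For (ii), in the coordinates $(\theta,t)$ one has $\p_t\phi=(\p_t z,\p_t h,1)$ and $\p_\theta\phi=(\p_\theta z,\p_\theta h,0)$; the first has nonzero last ($y_2$-) component, and the second has vanishing last component but nonzero projection to the $(x_1,y_1)$-plane since $z(-,t)$ is an immersion, so $d\phi$ has rank $2$ everywhere. For (iii), the last coordinate of $\phi(\theta,t)$ being $t$ forces any coincidence $\phi(\theta,t)=\phi(\theta',t')$ to have $t=t'$, and then $\phi(\theta,t)=\phi(\theta',t)$ holds iff $z(\theta,t)=z(\theta',t)$ and $h(\theta,t)=h(\theta',t)$. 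Hence the condition ``$h(\theta,t)\neq h(\theta',t)$ whenever $z(\theta,t)=z(\theta',t)$ with $\theta\neq\theta'$'' is precisely injectivity of $\phi$, and since $S^1\times[0,T]$ is compact an injective immersion is an embedding; that the remaining double points are transverse again follows from $z(-,t)$ being an immersion by the rank computation above.

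The main obstacle is really just step (i): one must reconcile the displayed integral formula for $h$ with the movie equation (\ref{eqn_change in x_2}), identifying the fixed-area hypothesis as exactly the integrability condition that makes the $\theta$-primitive single-valued on $S^1$ and checking that the normalization chosen is the one forced by $h(-,0)$. Once this bookkeeping is in place, (ii) and (iii) are immediate.
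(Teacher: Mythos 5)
Since the paper cites \cite[Lemma 2]{lin} and gives no proof, your proposal has to stand on its own logic. The framing via Proposition~\ref{prop_lag movie} is correct, and your arguments for (ii) the immersion property and (iii) the equivalence of the displayed double-point condition with injectivity are fine. The gap is in step (i). Proposition~\ref{prop_lag movie} requires $\partial_\theta h = \omega(\partial_t z, \partial_\theta z)$, but the formula displayed in the lemma is an integral in $t$, so it yields $\partial_t h = \omega(\partial_t z, \partial_\theta z)$, a different and generically incompatible condition. Differentiating the displayed formula in $\theta$ gives $\partial_\theta h(\theta,t) = \partial_\theta h(\theta,0) + \int_0^t \partial_\theta\left[\omega(\partial_t z, \partial_\theta z)\right](\theta,s)\,ds$, which agrees with $\omega(\partial_t z, \partial_\theta z)(\theta,t)$ only under the identity $\partial_\theta\left[\omega(\partial_t z, \partial_\theta z)\right] \equiv \partial_t\left[\omega(\partial_t z, \partial_\theta z)\right]$ together with a compatibility constraint on $h(-,0)$, neither of which holds in general. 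Concretely, $z(\theta,t) = (e^{t}\cos\theta,\, e^{-t}\sin\theta)$ has constant signed area $\pi$, and $\omega(\partial_t z, \partial_\theta z) = \cos(2\theta)$, so the displayed formula gives $h(\theta,t) = h(\theta,0) + t\cos(2\theta)$ and $\phi^{*}\omega = \bigl(\partial_\theta h(\theta,0) - \cos(2\theta) - 2t\sin(2\theta)\bigr)\,d\theta\wedge dt$, which cannot vanish for $t>0$ no matter how $h(-,0)$ is chosen.

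Your integration-by-parts computation showing $\frac{dC}{dt} = \oint_{S^1}\omega(\partial_t z, \partial_\theta z)\,d\theta$, and the observation that the fixed-area hypothesis makes a $\theta$-primitive of $\omega(\partial_t z, \partial_\theta z)$ single-valued on $S^1$, are both correct --- but that is the well-definedness analysis for a formula of the form $c(t) + \int_{\theta_0}^\theta \omega(\partial_t z, \partial_\theta z)(\sigma,t)\,d\sigma$, not the $t$-integral that is actually displayed. And the claim that ``the remaining constant of integration at each time is fixed by requiring agreement with $h(-,0)$ at $t=0$'' is not right: matching at $t=0$ determines only $c(0)$ and imposes a compatibility condition on $h(-,0)$ (namely that it solves the movie equation at $t=0$); the values $c(t)$ for $t>0$ remain free. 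So you need either to establish an identity reconciling the displayed $t$-integral with the required $\theta$-primitive under the stated hypotheses, or to replace the displayed formula with the $\theta$-integral and address the compatibility of $h(-,0)$ and the freedom in $c(t)$. As written, the proposal asserts this agreement rather than proving it, and the counterexample above shows the agreement fails in general.
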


 \section{Enriched Knot Diagrams}
In this section, we define precisely what we mean by an enriched knot diagram of a knot or link. We describe equivalence classes of enriched knot diagrams, and describe and discuss the undercut relation between these equivalence classes. In particular, we show that the undercut relation defines a strict partial order on equivalence classes of exact enriched knot diagrams.
 Lastly, we describe enriched knot diagram moves, which are adapted from \cite{lin}. Compare enriched knot diagrams with Lagrangian diagrams in \cite[Definition 1]{lin}. In particular, the crossings appear reversed as Lin looks at slices $x_2 =$ constant, whereas we consider $y_2 =$ constant.
 
Consider an immersed curve $\cD \subset \R^2$ whose only singularities are transverse double points. Denote by $X(\cD)$ the set of double points of $\cD$, which we call {\bf crossings of $\bf \cD$}.
\begin{defn}\label{defn_corners}
 For $p \in X(\cD)$ and $\epsilon > 0$ small enough, $B_\epsilon(p) \setminus \cD$ has four connected components that we call \textbf{corners} at $p$. We call two corners at $p$ \textbf{adjacent corners} if they have a common boundary arc and otherwise \textbf{opposite corners}. Note that the corners are defined only up to a choice of $\epsilon$ and we may also replace the ball $B_\epsilon(p)$ by a small open neighbourhood of $p$. We call two corners $C_1$ and $C_2$ at $p$ \textbf{equal} if there exists $\epsilon > 0$ such that $C_1 \cap B_\epsilon(p) = C_2 \cap B_\epsilon(p)$.
\end{defn}
\begin{defn}\label{defn_cut out}
The complement $\R^2 \setminus \cD$ consists of disjoint union of  connected sets $\cup_{j=1}^k D_j$. All but one $D_j$ are a bounded subsets of $\R^2$ that are diffeomorphic to the open disk, $\D^\circ = \{(x,y) \in \R^2| x^2 + y^2 <1\}$. In fact, these are images of holomorphic disks with corners  $u_j : \D \to D_j$ (see Definition \ref{defn_hol disk}).
We refer to these holomorphic disks with corners as \textbf{disks cut out by curve $\cD$}. 
\end{defn}
\begin{defn}\label{defn_diagram}
An \textbf{enriched knot diagram} is a tuple $(\cD, \sigma, \cA)$ where (refer Figure \ref{fig:defn_diagrams}):
\begin{itemize}
\item $\cD$ is an  immersed, closed curve (possibly disconnected) in $\R^2$ whose only singularities are transverse double points;
\item (SIGNS) $\sigma$ is a function assigning sign $\pm 1$ to each corner of $\cD$ such that at each double point of $\cD$, adjacent corners have opposite signs and opposite corners have the same sign;
\item (AREA) 
$\cA$ assigns to each disk cut out by $\cD$ the positive real number equal to the disk's area with respect to the standard metric on $\R^2$. We can also forget the geometric aspect and view it as a function on the set of all disks cut out by $\cD$ taking values in positive real numbers.
\end{itemize}
\end{defn}
\begin{defn}\label{defn_exact diagram}
An enriched knot diagram, $[\cD, \sigma, \cA]$, is said to be exact is the total signed area, $$\int_\cD x_1 dy_1 = 0.$$
\end{defn}
\begin{figure}[h!]
\begin{center}
\includegraphics[width = 5in]{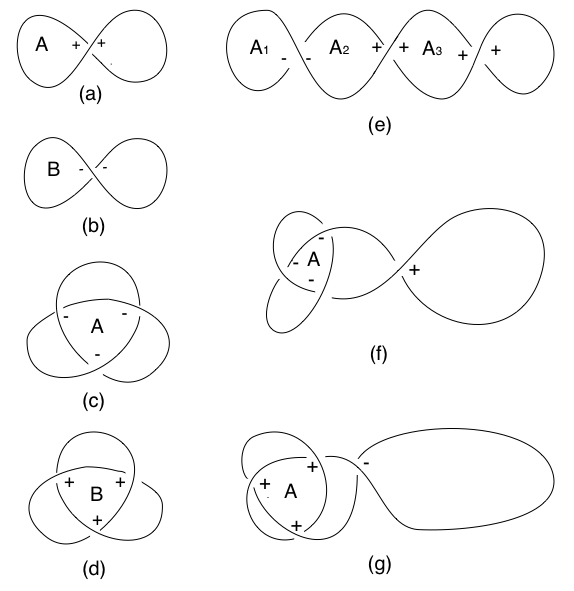}
 \end{center}
 \caption[caption]
    {\tabular[t]{@{}l@{}}Enriched knot diagrams (a) $8_+(A)$ (b) $8_-(B)$ (c) $T_-(A)$  (d) $T_+(B)$ \\ (e) $C^{-++}(A_1, A_2,A_3)$ (f) $E_-(A)$ (g) $E_+ (A)$\\
Note that we draw broken strands to clarify the crossing type, even though the\\ actual enriched knot diagram consists of an immersed curve with crossings.\\ Additionally, the information conveyed by the signs and the broken strands is the\\ same. We continue to use both, even though this information is redundant, for\\ clarity. Compare with Lagrangian diagrams in \cite{lin}.   
   \endtabular}
  \label{fig:defn_diagrams}
\end{figure}

Given a generic link $K \in \R^3$, we obtain an enriched knot diagram as follows:
Take $\cD_K$ to be the image $\pi_1(K)$. We get a ``height function"  $x_2 : \cD_K \setminus X(\cD_K) \to \R$ defined by $\pi_1^{-1}(z) = (z, x_2(z)) \in K $ outside the crossings. Generic $K$ gives $\cD_K$ that is immersed with only finitely many transverse double points as its singularities. We decorate $\cD_K$ to get a diagram as follows:
\begin{itemize}
\item(SIGNS) At each crossing $q$, by taking a small enough neighbourhood $U \ni q$, we get four corners in $U$ divided by $\cD$. Intuitively, if for a corner, $C$, the height value jumps up $q$ as we traverse $\p C$ in the anticlockwise direction, we assign $\sigma_K(C) = -1$. If the height value jumps down, we assign $\sigma_K(C) = 1$. More precisely, let us denote by $\lim_{\p C \to q^-} x_2$ the limit of $x_2$-values as we approach $q$ along the boundary of $C$ (on $\cD$) in the anticlockwise direction. Similarly, let $\lim_{\p C \to q^+} x_2$ denote the limit in the clockwise direction. Then we assign
\begin{align*}
\sigma_K(C) = -1 \text{ if } &\lim_{\p C \to q^-} x_2 < \lim_{\p C \to q^+} x_2, \text{ and }\\
\sigma_K(C) = 1 \text{ if } &\lim_{\p C \to q^-} x_2 > \lim_{\p C \to q^+} x_2.
\end{align*}
\item(AREAS) The area $\cA_K(D)$ assigned to each disk ,$D$, cut out by $\cD_K$ is the area of $D$ with respect to the standard metric on $\R^2$.
\end{itemize}
Then $(\cD_K, \sigma_K, \cA_K)$ is the enriched knot diagram of $K$.

For any diagram $(\cD, \sigma, \cA)$, $\cD$ is the image of an immersion $\iota: \sqcup_{j = 1}^k S^1_j \to \R^2$ where each $S^1_j$ is a copy of the unit circle. The above construction of obtaining a diagram from a link can be easily reversed by assigning a height function $x_2: \sqcup_{j = 1}^k S^1_j \to \R$, so that the combined map $(\iota, x_2): \sqcup_{j = 1}^k S^1_j \to \R^3$ is an embedding that respects the signs $\sigma$. That is, if we formed a diagram from this link following the above method, we get back the original diagram $(\cD, \sigma, \cA)$. Let us denote a link obtained from $(\cD, \sigma, \cA)$ by $K_\cD$. The height function for $K_\cD$ and therefore, the link itself is not uniquely determined by the diagram $(\cD, \sigma, \cA)$. This will not concern us as the undercutting relation, existence of Lagrangian cobordisms, and the obstructions we develop will only depend on equivalence classes of enriched knot diagrams, which we define next. But first, we define the enriched knot diagrams for the boundaries of a Lagrangian tangle.
\begin{defn}
Consider a Lagrangian tangle, $L \subset \R^4_{[a,b]}$. Let $\iota_b: \R^2_b \to \R^2$ be the canonical isomorphism $(x_1, y_1, 0, b) \mapsto (x_1, y_1)$.
We say a diagram, $(\cD, \sigma, \cA)$, is the enriched knot diagram of $\p_+ L$ if $\cD = \iota_b(\p_+ L)$, for any disk $D$ cut out by $\cD$, $\cA(D)$ is equal to the area of $D$ with respect to the standard Riemannian metric on $\R^2$, and the signs are the signs for the corresponding corners of $L_{b-\epsilon}$ for small $\epsilon > 0$. This makes sense as the crossings of $\pi_1(L_{b-\epsilon})$ are in on-to-one correspondence with the crossings of $\pi_1(\p_+ L)$ (see Remark \ref{rem_standard collar}). Similarly, we define the enriched knot diagram for $\p_- L = L_a$ using  $\iota_a: \R^2_b \to \R^2$ and the signs from $L_{a+\epsilon}$.
\end{defn}

\begin{defn}\label{defn_diagram equivalence}
Two diagrams, $(\cD_1, \sigma_1, \cA_1)$ and $(\cD_2, \sigma_2, \cA_2)$, are said to be \textbf{equivalent} if there exists an orientation preserving diffeomorphism $\phi: \R^2 \to \R^2$ such that 
\begin{enumerate}
\item $\phi(\cD_1) = \cD_2$,
\item for all corners $C$ of $\cD_1$, $\sigma_2 (\phi(C)) = \sigma_1(C)$,
\item for all disks $D$ cut out by $\cD_1$, $\cA_2 \circ \phi (D) = \cA_1 (D)$.
\end{enumerate}
We denote the equivalence class of an enriched knot diagram $(\cD, \sigma, \cA)$ by $[\cD, \sigma, \cA]$.
We sometimes drop the $\sigma$ and $\cA$ from the notation and write $[\cD]$ with the hope that it makes reading easier and does not confuse the reader.
\end{defn} 
The above definition is trivially an equivalence relation as it is defined via diffeomorphisms.
\begin{lem}\label{lem_equivalent defns of ekd equivalence}

For an enriched knot diagram, $(\cD_0, \sigma_0, \cA_0)$, any diffeotopy (isotopy through diffeomorphisms) $\psi_t: \R^2 \to \R^2$ gives rise to a family of enriched knot diagram $\psi_t^*(\cD_0, \sigma_0, \cA_0)$ where $\psi_t^*(\cD_0) = \psi_t(\cD_0)$; for any corner $C$ of $\cD_0$, $\psi_t^*\sigma_0(\psi_t(C)) = \sigma_0(C)$; and for any disk $D$ cut out by $\cD_0$, $\psi_t^*\cA_0(\psi_t(D))$ is the area of $\psi_t(D)$ with respect to the standard metric on $\R^2$. 

Two enriched knot diagrams $(\cD_0, \sigma_0, \cA_0)$ and $(\cD_1, \sigma_1, \cA_1)$ are equivalent if and only if there exists a compactly supported Hamiltonian isotopy $\psi_t: \R^2 \to \R^2$ such that $\psi_1(\cD_0) = \cD_1$, and $\psi_1^*\sigma_0 = \sigma_1$.

\end{lem}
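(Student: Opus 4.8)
The plan is to prove the two assertions separately. For the first assertion, the content is essentially a definition-chase: given a diffeotopy $\psi_t$, one checks that $\psi_t^*(\cD_0,\sigma_0,\cA_0)$ as defined is an enriched knot diagram for each $t$. The only nontrivial points are that $\psi_t(\cD_0)$ is still an immersed closed curve with transverse double points (since $\psi_t$ is a diffeomorphism, it carries transverse double points to transverse double points and preserves the local four-corner picture), that the sign compatibility conditions at crossings are preserved (immediate since $\psi_t$ is a bijection on corners taking adjacent corners to adjacent corners and opposite to opposite, and the signs are transported), and that $\psi_t^*\cA_0$ takes positive real values (clear, as areas of open disks are positive). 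So I would state this first assertion and dispatch it in a couple of sentences.

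The substantive content is the "if and only if" in the second assertion. The "if" direction is easy: a compactly supported Hamiltonian isotopy is in particular a diffeotopy, hence $\psi_1$ is an orientation-preserving diffeomorphism; conditions (1) and (2) of Definition \ref{defn_diagram equivalence} hold by hypothesis, and condition (3) holds because a Hamiltonian isotopy of $\R^2$ (with its standard symplectic form $dx_1\wedge dy_1$, which is also the area form) is area-preserving, so $\cA_1(\psi_1(D)) = \area(\psi_1(D)) = \area(D) = \cA_0(D)$. The real work is the "only if" direction: given an orientation-preserving diffeomorphism $\phi$ realizing the equivalence, I must upgrade it to a compactly supported Hamiltonian isotopy that still carries $\cD_0$ to $\cD_1$ with matching signs.

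For this "only if" direction, the strategy is: first, since $\phi$ is an orientation-preserving diffeomorphism of $\R^2$, it is isotopic to the identity through diffeomorphisms, and — after a compactly supported modification, using that $\cD_0$ is compact — we may assume $\phi$ is compactly supported. This gives a diffeotopy $\phi_t$ from $\id$ to $\phi$. Now I want to replace $\phi$ by an area-preserving (symplectic = Hamiltonian, since $H^1_c$ arguments or Moser's trick apply on $\R^2$) diffeomorphism agreeing with $\phi$ on a neighborhood of $\cD_0$, or at least carrying $\cD_0$ to $\cD_1$ with the same induced signs. The key device is Moser's trick: consider the two area forms $\omega_0 = dx_1\wedge dy_1$ and $\omega_1 = \phi^*(dx_1\wedge dy_1)$; both are area forms, and condition (3) of the equivalence says exactly that $\phi$ maps each complementary disk of $\cD_0$ to a disk of the same area, i.e. $\int_{D}\omega_0 = \int_D \phi^*\omega_0$ for every bounded complementary region $D$ and for their unions — so $\omega_0$ and $\omega_1$ assign equal areas to all such regions, and (being compactly supported perturbations of each other) have equal total integral over any large disk containing everything. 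A relative version of Moser's theorem, applied on $\R^2$ with the constraint of fixing $\cD_0$ pointwise, then produces a compactly supported diffeomorphism $\rho$ with $\rho^*\omega_1 = \omega_0$ and $\rho|_{\cD_0} = \id$; then $\psi \defeq \phi\circ\rho$ is area-preserving, compactly supported, satisfies $\psi(\cD_0) = \cD_1$, and induces the same signs as $\phi$ (since $\rho$ fixes $\cD_0$ and a neighborhood-germ argument shows the corner-signs are unchanged). Finally, any compactly supported area-preserving diffeomorphism of $\R^2$ is Hamiltonian-isotopic to the identity (the symplectomorphism group of $\R^2$ is connected and $H^1_c(\R^2) = 0$, so the flux vanishes), yielding the desired $\psi_t$.

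The main obstacle is the relative Moser argument: one needs the two area forms to agree to high enough order, or the ambient geometry to be flexible enough, for Moser interpolation to be carried out while keeping $\cD_0$ fixed — equivalently, one must check that $\omega_1 - \omega_0$ is exact with a primitive vanishing along $\cD_0$, which uses precisely that $\phi$ preserves the area of each complementary region (so that the "periods" of $\omega_1 - \omega_0$ over the relevant relative cycles vanish). I would isolate this as the one lemma that deserves a careful few lines, citing the standard Moser/Weinstein machinery, and treat everything else as routine verification.
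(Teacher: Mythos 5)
Your proposal is correct in its overall strategy — it is the same Moser-trick argument the paper uses — but the organization differs enough to be worth noting. The paper first constructs a preliminary diffeomorphism $\wt\phi$ that agrees with $\psi$ away from $\cD_0$ and is already symplectic in a neighborhood of $\cD_0$ (achieved by explicitly rescaling $d\psi$ along the crossings and arcs of $\cD_0$); it then views $\cD_0$ as the $1$-skeleton of a CW structure on $\R^2$ and runs the standard relative Moser isotopy separately on each $2$-cell, where $\wt\phi^*\omega$ already equals $\omega$ near the boundary. You instead propose a single global relative Moser between $\omega$ and $\phi^*\omega$, demanding that the Moser primitive $\lambda$ vanish pointwise along $\cD_0$ so that the flow fixes $\cD_0$. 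This is morally equivalent, and your identification of the key role of the area-of-complementary-disks condition (it makes the periods of $\lambda$ over loops in $\cD_0$ vanish, by Stokes) is exactly right. However, your appeal to ``standard Moser/Weinstein machinery'' undersells the work needed: the standard relative Moser theorem assumes the two forms already agree on the subset being fixed, which they do not here (on $\cD_0$, $\phi^*\omega = (\det d\phi)\,\omega$ and $\det d\phi$ need not be $1$). Producing a compactly supported primitive $\lambda$ with $\lambda_x = 0$ for every $x \in \cD_0$ requires a Whitney-extension--type construction of a function $h$ whose $1$-jet along the singular curve $\cD_0$ matches a given primitive, with consistency checked at the crossings; this is doable but it is a genuine lemma, not a direct citation. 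The paper's preprocessing step sidesteps this by first forcing $\phi^*\omega = \omega$ near $\cD_0$ before invoking Moser, and so is easier to make fully rigorous. Also worth noting: requiring $\rho|_{\cD_0} = \id$ pointwise is stronger than needed — it would suffice to have $\rho(\cD_0) = \cD_0$ as a set with the crossings fixed, which would only require $\lambda|_{T\cD_0} = 0$ rather than $\lambda|_{\cD_0} = 0$, and would relax the jet-matching problem considerably.
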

\begin{proof}
The existence of a Hamiltonian isotopy $\psi_t: \R^2 \to \R^2$ such that $\psi_1(\cD_0) = \cD_1$, and $\psi_1^*\sigma_0 = \sigma_1$, implies equivalence of the diagrams $(\cD_0, \sigma_0, \cA_0)$ and $\cD_1, \sigma_1, \cA_1)$ via the diffeomorphism $\psi_1$. As $\psi_1$ is the time-$1$ map of a Hamiltonian isotopy, it is area preserving.

For the converse, suppose there exists a diffeomorphism $\psi: \R^2 \to \R^2$  making $(\cD_0, \sigma_0, \cA_0) \sim (\cD_1, \sigma_1, \cA_1)$ as in Definition \ref{defn_diagram equivalence}. Note that by using appropriate cut off functions outside a compact set $K$ that contains both $\cD_1$ and $\cD_2$, we may assume that the diffeomorphism $\psi$ is equal to identity outside $K$.
We show that the existence of such a diffeomorphism, $\psi$, implies existence of a symplectomorphism, $\phi$, such that $\phi(\cD_0) = \cD_1$, and for all corners, $C$, of $\cD_0$, $\sigma_2 (\phi(C)) = \sigma_1(C)$. Then, as the group of symplectomorphisms $\mathrm{Symp} (\R^2)$ is  isomorphic to the group of compactly supported Hamiltonian isotopies (see \cite{barsamian}), we get that $\phi = \phi_1$ for a compactly supported Hamiltonian isotopy $\phi_t: \R^2 \to \R^2$.  The  double points criterion is a generic condition about transverse self-intersections. Note that it is not necessary that $\psi = \phi$.

 To construct the symplectomorphism $\phi$, we carry out a standard Moser argument. View $\cD_0$ as the $1$-skeleton of a CW-complex structure on $\R^2$ with one $2$-cell of infinite area and proceed by induction on the dimension of the skeleta. First we want a symplectomorphism at the crossings of $\cD_0$ such that $d\phi_p(v)$ is a positive multiple of $d\psi_p(v)$. We achieve this by rescaling by $\frac{1}{\det d\psi_p}$. Now along each arc of $\cD_0$ we define $\phi$ such that it takes the arc $\alpha$ to $\psi(\alpha)$ and it is symplectic. We achieve this by taking its derivative $d\phi$ as a rescaling of $d\psi$ such that the symplectic condition holds. Now we want to extend across the $2$-cells. Note that by using cutoff functions we may extend $\phi$ to all of $\R^2$ such that it matches $\psi$ outside a neighbourhood of the $1$-skeleton, $\cD_0$. Let us call this map $\wt \phi$. Then, $\omega_t \defeq (1-t)\wt \phi^* \omega + t \omega$ is a cohomologous path of symplectic forms on each $2$-cell such that $\omega_t = \omega$ on $\cD_0$ and outside $K$. Then for each $2$-cell, $D$, by \cite[Theorem 3.17]{MS_Intro}, there exists an isotopy of diffeomorphisms $\eta^D_t: D \to D$ such that $\eta^D_0 = \id$, $(\eta^D_t)^* \omega_t = \omega$ for all $t$ and $\eta^D_t = \id$ for all $t$ on $\cD_0$ (and outside $K$ for the $2$-cell that is unbounded). As $\eta^D$'s for adjacent $D$'s match on $\cD_0$, we may glue $\eta_t$ together to get $\eta_t : \R^2 \to \R^2$.  Then $\phi = \wt \phi \circ \eta_1$ is the required symplectomorphism.
\end{proof}

\subsection{Undercut relation}
In this section we define undercut relation and discuss some of its properties.

Define a \textbf{diagram pair} to be an ordered pair of two diagrams $([\cD_1, \sigma_1, \cA_1],[\cD_2, \sigma_2, \cA_2])$. We refer to the first diagram, $[\cD_1, \sigma_1, \cA_1]$, as the \textbf{lower diagram} and $[\cD_2, \sigma_2, \cA_2]$ as the \textbf{upper diagram} of the pair. Given a diagram pair $([\cD_1, \sigma_1, \cA_1], [\cD_2, \sigma_2, \cA_2])$, we say a disk $A$ is \textbf{cut out by the pair} if it is either cut out by $\cD_1$ or by $\cD_2$. Recall we defined area of an oriented knot as $a(K) = |\int_K \lambda|$, which does not depend on the chosen orientation of the knot. So, we may talk about relatively exact cobordisms without mentioning an orientation.

To show that the undercut relation is well-defined on equivalence classes of enriched knot diagrams we prove the following lemma.
\begin{lem}\label{lem_relation well defined}
Suppose $K_1$ and $K_2$ are two links in $\R^3$ with equivalent enriched knot diagrams $(\cD_1, \sigma_1, \cA_1)$ and $(\cD_2, \sigma_2, \cA_2)$, respectively. 
Then, for a link $K_3$ if there exists Lagrangian cobordism $K_3 \prec  K_1$, there exists a Lagrangian cobordism $K_3 \prec K_2$. Similarly, $K_1 \prec  K_3$ implies $K_2 \prec  K_3$.
\end{lem}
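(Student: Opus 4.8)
The plan is to promote the equivalence of enriched knot diagrams, which by Lemma~\ref{lem_equivalent defns of ekd equivalence} is witnessed by a compactly supported Hamiltonian isotopy of $\R^2$, to an ambient symplectomorphism of $\R^4$ that carries one cobordism onto the other while fixing the opposite end. First I would invoke Lemma~\ref{lem_equivalent defns of ekd equivalence} to obtain a compactly supported Hamiltonian isotopy $\psi_t: \R^2 \to \R^2$ (generated by a compactly supported Hamiltonian $H_t: \R^2 \to \R$) with $\psi_1(\cD_1) = \cD_2$ and $\psi_1^*\sigma_1 = \sigma_2$. The key point is that a Hamiltonian isotopy of the $(x_1,y_1)$-plane extends canonically to a Hamiltonian isotopy of $\R^4 = \R^2_{x_1,y_1}\times \R^2_{x_2,y_2}$ by using the \emph{same} Hamiltonian $H_t$, now viewed as a function on $\R^4$ independent of $x_2, y_2$; call the resulting symplectic isotopy $\Psi_t$. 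Then $\Psi_t$ preserves every hyperplane $\R^3_a = \{y_2 = a\}$ and every plane $\R^2_a$, and on each slice it acts as $\psi_t$ on the $(x_1,y_1)$-factor. However, this global $\Psi_1$ also moves the boundary at the \emph{other} end, which we do not want, so I would next cut it off in the $y_2$-direction.

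Concretely, suppose $K_3 \prec_L K_1$ with $L \subset \R^4_{[a,b]}$, $\p_- L = \iota_a(K_3)$, $\p_+ L = \iota_b(K_1)$. Using Remark~\ref{rem_standard collar} I may assume $L$ is standard near $y_2 = b$: there is $\epsilon>0$ so that $L \cap \R^4_{[b-\epsilon,b]}$ is a product (the slices $\pi_1(L_t)$ have constant crossings, in fact one can arrange them literally constant). Choose a cutoff function $\chi: \R \to [0,1]$ with $\chi \equiv 1$ near $b$ and $\chi \equiv 0$ on $(-\infty, b-\epsilon/2]$, and replace the Hamiltonian $H_t(x_1,y_1)$ on $\R^4$ by $\widehat H_t(x_1,y_1,x_2,y_2) \defeq \chi(y_2) H_t(x_1,y_1)$. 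This is still compactly supported in the $(x_1,y_1)$-directions but now also localized near $y_2 = b$; let $\widehat\Psi_t$ be its Hamiltonian flow. Two things must be checked: (i) $\widehat\Psi_1(L)$ is still an embedded Lagrangian cobordism — this holds because $\widehat\Psi_1$ is a symplectomorphism, so it carries Lagrangians to Lagrangians and embeddings to embeddings; (ii) $\widehat\Psi_1(L)$ has the desired boundaries. At the lower end, $\widehat\Psi_1$ is the identity on $\R^4_{(-\infty,b-\epsilon/2]}$ since $\widehat H_t$ vanishes there, so $\p_-(\widehat\Psi_1 L) = \iota_a(K_3)$ unchanged. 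At the upper end I need to verify that $\widehat\Psi_1$ restricted to $\R^3_b$ agrees with $\psi_1$ on the $(x_1,y_1)$-plane: since $\chi \equiv 1$ near $y_2 = b$, in a neighborhood of $\R^3_b$ the Hamiltonian $\widehat H_t$ equals $H_t(x_1,y_1)$, whose flow preserves $\R^3_b$ and acts there exactly as $\psi_t$ on the first factor and trivially on $x_2$. Hence $\widehat\Psi_1(\p_+ L) = \widehat\Psi_1(\iota_b(K_1))$ is a link $K_2'$ whose $\pi_1$-image is $\psi_1(\cD_1) = \cD_2$ with the correct signs $\sigma_2$ and, because $\psi_1$ is area-preserving, the correct areas $\cA_2$; thus $(\cD_{K_2'}, \sigma, \cA) = (\cD_2, \sigma_2, \cA_2)$ as enriched knot diagrams, i.e.\ $K_2'$ is a link with diagram $(\cD_2, \sigma_2, \cA_2)$.

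Finally, since the enriched knot diagram does not determine the link uniquely, I must pass from ``some link $K_2'$ with diagram $(\cD_2,\sigma_2,\cA_2)$'' to ``the given link $K_2$.'' Here I would observe that any two links with the same enriched knot diagram are Lagrangian-isotopic through links with that fixed diagram — indeed they differ only by the choice of height function $x_2: \sqcup S^1_j \to \R$ compatible with the signs, and the space of such height functions is convex (hence connected), so a straight-line homotopy of height functions gives a smooth isotopy; this isotopy, being through slices in a single $\R^3_b$, can be realized as $L'_b$ for a product Lagrangian cobordism in a collar $\R^4_{[b,b+\delta]}$ (or simply invoke Lemma~\ref{lem_gluing lagrangians smoothly} after noting the two links bound an obvious embedded Lagrangian annulus in a thin slab). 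Stacking this collar on top of $\widehat\Psi_1(L)$ via Lemma~\ref{lem_gluing lagrangians smoothly} produces an embedded Lagrangian cobordism from $K_3$ to $K_2$, giving $K_3 \prec K_2$. The argument for $K_1 \prec K_3 \implies K_2 \prec K_3$ is identical, cutting off near the \emph{lower} end instead and using $\psi_1^{-1}$. The main obstacle I anticipate is the bookkeeping in (ii): making sure the cutoff in $y_2$ does not destroy the Lagrangian or embedded condition in the transition region $y_2 \in [b-\epsilon, b-\epsilon/2]$ — this is exactly why the standardization from Remark~\ref{rem_standard collar} is needed, so that $L$ is a product there and the flow $\widehat\Psi_t$, which need not preserve that slab's slices individually, still maps the product Lagrangian to an embedded Lagrangian (it does, automatically, being a symplectomorphism); the only real content is confirming relative exactness is preserved, which is immediate since $\widehat\Psi_1$ is a symplectomorphism of $\R^4$ and hence induces the identity on $H^2(\R^4, L)$-pairings up to the identification $\widehat\Psi_{1*}: \Pi_2(\R^4,L) \to \Pi_2(\R^4, \widehat\Psi_1 L)$.
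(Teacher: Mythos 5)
Your first step is correct and conceptually cleaner than what the paper does. Extending $H_t(x_1,y_1)$ to $\widehat H_t = \chi(y_2)H_t$ and flowing is a nice trick, and it works better than you seem to expect: since $\widehat H_t$ is independent of $x_2$, the Hamiltonian vector field $X_{\widehat H_t}$ has no $\partial_{y_2}$-component, so $\widehat\Psi_t$ preserves every level set $\{y_2=c\}$ (not merely the extreme ones). Consequently $\widehat\Psi_1(L)\subset\R^4_{[a,b]}$, transversality to $\R^3_a$ and $\R^3_b$ is automatic, and on $\R^3_b$ (where $\chi\equiv 1$ and $\chi'=0$) the flow is $\psi_t\times\mathrm{id}_{x_2}$, so indeed $\widehat\Psi_1(\partial_+L)=K_2'$ has diagram $(\cD_2,\sigma_2,\cA_2)$. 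Your appeal to Remark~\ref{rem_standard collar} is unnecessary for this step, and your parenthetical worry that the flow might fail to preserve the transition slab's slices individually is unfounded. The paper instead constructs this collar explicitly in movie coordinates via $z'=\phi_t\circ z$ and $x_2'=x_2+H_t\circ\phi_t\circ z$, arriving at essentially the same place.

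The genuine gap is in your final step, passing from $K_2'$ to the given $K_2$. You assert that a straight-line homotopy of height functions over the fixed projection $\cD_2$ gives a ``product Lagrangian cobordism,'' or that the two links bound an ``obvious embedded Lagrangian annulus in a thin slab.'' This is false. By the movie PDE~(\ref{eqn_change in x_2}), a Lagrangian all of whose slices share the same $(x_1,y_1)$-projection $z(\theta)$ must satisfy $\partial_\theta x_2=\omega(\partial_t z,\partial_\theta z)=0$ in every slice, forcing each slice to lie in a horizontal plane $\R^2_c$ --- impossible when the height function is non-constant in $\theta$. To interpolate the height functions one must let the projection wander away from $\cD_2$ and return, while controlling embeddedness; this is precisely the delicate second half of the paper's proof, which introduces local Hamiltonians $H^\gamma_s$ along each arc of the diagram and an area/sign compatibility condition between the intermediate slice and the target, and it is where the hypothesis $\psi_1^*\sigma_1=\sigma_2$ is actually used to rule out new double points. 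As written, your proposal does not supply that content, and it does not become automatic from the convexity of the space of compatible height functions.
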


\begin{proof}
Suppose $L \subset \R^4_{[a,b]}$ is a Lagrangian cobordism from $K_3$ to $K_1$. Assume for simplicity that $K_1$ and $K_2$ are knots, that is, they have one connected component each. 
We want to change $L$ in a neighbourhood of the boundary $\p_+ L$ so that the new boundary is $K_2$. We do this by constructing a Lagrangian collar $\wt L$ such that $\wt L_{b-\epsilon} = L_{b-\epsilon}$ for some $\epsilon > 0$ and $\wt L_b = \iota_b(K_2)$. Note that the main difficulty in this proof is to make the boundary of $L$ exactly $K_2$, including achieving the areas of the underlying diagram.
 
 Choose $\epsilon > 0$ small compared to $b-a$ such that $L \cap \R^4_{[b-2\epsilon,b]}$ has a parametrization,
 \begin{align*}
 S^1 \times [b-2\epsilon, b]  \to \R^2,\quad
 (\theta, t)  \mapsto (z(\theta, t), x_2(\theta, t), t).
 \end{align*} 
More conditions on the choice of $\epsilon > 0$ will be clear as the proof progresses. Label $K_1' \defeq L_{b-\epsilon}$ and $K_2'' \defeq L_{b-2\epsilon}$.
Assume that we have a time dependent Hamiltonian
 \begin{align*}
 H: \R^2 \times [b - 2 \epsilon, b] \to \R,\quad
 (z, \tau) \mapsto H(z, \tau)
 \end{align*}
such that for Hamiltonian vector field $X_\tau$ defined by $\omega(X_\tau, \cdot) = dH_\tau$, the flow 
\begin{align*}
\phi: \R^2 \times [b - 2 \epsilon, b] \to \R^2
\end{align*} 
of $X$ gives Hamiltonian isotopy such that $\phi_1(\cD_1) = \cD_2$ and $\phi_1^* \sigma_1 = \sigma_2$, as in Lemma \ref{lem_equivalent defns of ekd equivalence}. Note that this means $\frac{d}{d\tau}\phi_x(\tau) = X_{\phi_x(\tau)}$ for all $x \in \R^2$.
 Then, as
  \begin{align*}
 S^1 &\to \R^3,\quad
 \theta \mapsto ( z(\theta, b), x_2(\theta, b))
 \end{align*}
 is a parametrization of $K_1$,  we get a parametrization of $K_2$ 
 \begin{align*}
 S^1 \to \R^3,\quad
 \theta \mapsto (\phi_b \circ z(\theta, b), h(\theta))
 \end{align*}
for an appropriately chosen $h:S^1 \to \R$. Let us denote $w \defeq \phi_b \circ z$.

 Consider the map
 \begin{align*}
z': S^1 \times [b-2\epsilon, b-\epsilon]  \to \R^2,\quad
 (\theta, t) \mapsto \phi_{t} \circ z(\theta, t).
 \end{align*}
 We would like to use $z'$ as the $(x_1, y_1)$-part of a Lagrangian parametrization. Note that 
 \begin{align*}
 \frac{\p z'}{\p \theta}(\theta, t) &=  \frac{\p \phi_t}{\p z}(z(\theta, t))  \frac{\p z}{\p \theta}(\theta,t) = d\phi_t \circ \frac{\p z}{\p \theta};\\
  \frac{\p z'}{\p t}(\theta, t) &=  \frac{\p \phi_t}{\p z}(z(\theta, t))  \frac{\p z}{\p t}(\theta,t) +  \frac{\p \phi}{\p \tau}(z(\theta, t), {t - b + 2\epsilon});\\
  \omega\left(\frac{\p z'}{\p \theta}, \frac{\p z'}{\p t}\right) &= \omega\left( d\phi_t \circ \frac{\p z}{\p \theta}, d\phi_t  \circ \frac{\p z}{\p t}\right) + \omega\left( d\phi_t  \circ \frac{\p z}{\p \theta}, \frac{\p \phi}{\p \tau}\right)\\
  & = \omega\left(  \frac{\p z}{\p \theta},  \frac{\p z}{\p t}\right) + \omega\left( d\phi_t  \left( \frac{\p z}{\p \theta}\right), \frac{\p \phi}{\p \tau}\right)\\
  & = \frac{\p x_2}{\p \theta} + \omega\left( d\phi_t \left( \frac{\p z}{\p \theta}\right), \frac{\p \phi}{\p \tau}\right)\\
  & =  \frac{\p x_2}{\p \theta} + dH_{t} \circ d\phi_t  \left(\frac{\p z}{\p \theta}\right) = \frac{\p}{\p \theta} (x_2 + H_t \circ \phi_t \circ z).
 \end{align*}
 Define $x_2': S^1 \times [b-2\epsilon, b-\epsilon] \to \R$ to be
 \begin{align*}
 x_2'(\theta,t) = x_2(\theta, t) + H_t \circ \phi_t \circ z(\theta, t).
 \end{align*} Then we have 
 \begin{align*}
 x_2'(\theta, b-2\epsilon) = x_2(\theta, b - 2 \epsilon),\quad
 \frac{\p x_2'}{\p \theta}  = \frac{\p}{\p \theta} (x_2 + H_t \circ \phi_t \circ z).
 \end{align*}
So $L' \subset \R^4_{[b-2\epsilon, b-\epsilon]}$ defined by the following parametrization
 \begin{align*}
 S^1 \times [b-2\epsilon, b-\epsilon] \to \R^2, \quad
 (\theta, t)  \mapsto (z'(\theta, t) , x_2'(\theta, t), t),
 \end{align*}
is a Lagrangian that we can attach to $L \cap \R^4_{[a,b-2\epsilon]}$. 
 
 Now we want to build another Lagrangian $L'' \subset \R^4_{b-\epsilon, b}$ such that $L''_{b-\epsilon} = L'_{b-\epsilon}$ and $L_b'' = K_2$. So we need functions 
 \begin{align*}
 z'' : S^1 \times [b - \epsilon, b] \to \R^2, \quad x''_2 : S^1 \times [b - \epsilon, b] \to \R,
 \end{align*}
 such that
 \begin{align*}
 z''(\theta, b-\epsilon) & = z'(\theta, b-\epsilon);
 \quad z''(\theta, b) = w(\theta);\\
 x_2''(\theta, b-\epsilon) & = x_2'(\theta, b-\epsilon); \quad
 x_2''(\theta, b) = h(\theta);\\
 \frac{\p x_2''}{\p \theta} & = \omega\left( \frac{\p z''}{\p \theta},  \frac{\p z''}{\p t}\right).
 \end{align*}
 
Assume that the isotopy $\phi$ is ``fast" on $\tau \in [b-2\epsilon, b-\epsilon]$ and we are very ``close" to the diagram we want to achieve. More concretely, we want that $z'_{b-\epsilon} = z'(\theta, b - \epsilon)$ to have the following properties:
\begin{enumerate}
\item Image of $z'_{b-\epsilon}$ is in a neighbourhood of $\im (w)$ such that for each arc $\gamma$ of $\im z'_{b-\epsilon} \setminus X(\im z'_{b-\epsilon})$, the corresponding arc of $w$ is graphical over it. Here, $X(\im z'_{b-\epsilon})$ denotes the double points of $z'_{b-\epsilon}$.
\item For each sector $S$ of area $A$ cut out by $w$, the corresponding sector of $z'_{b-\epsilon}$ has area $A'$ such that
\begin{align*}
\sign ( A - A') = \sign \left( \sum_{p \text{ positive corner of} S} \delta h(p) -  \sum_{p \text{ negative corner of} S} \delta h(p) \right).
\end{align*}
Here, $\delta h (p) = |h(\theta_1) - h(\theta_2)|$, whenever $w(\theta_1) = w(\theta_2)=p$.
\end{enumerate}
 We may achieve this by controlling the speed of $\phi_x$, that is, $\frac{\p \phi_x}{\p \tau}$ for each $x \in \R^2$, by reparametrizing with respect to the $\tau \in [b-2\epsilon, b]$ coordinate of the Hamiltonian $H$. The second property we always get as we assumed the signs at corresponding corners of $K_1$ and $K_2$ are the same and $x_2'(\theta , t) = x_2 (\theta, t) + H_t \circ \phi_t \circ z(\theta, t)$, implies $\delta x_2 = \delta x_2'$ as $H_t$ is a function on $\R^2$ and so, (see Lemma \ref{lem_derivative of area})
\begin{align*}
\delta x_2'(p) = \delta x_2(p, t) = \frac{d}{dt} \area \text{ for all } t \in [b-2\epsilon, b].
\end{align*}
 
 By the first assumption, for each continuous arc $\gamma$ of $z'_{b-\epsilon} \setminus \Delta(z'_{b-\epsilon})$ we can take a neighbourhood $N_\gamma$ and symplectomorphisms $\psi_\gamma: N_\gamma \to I_\gamma \times (-\delta, \delta)$ for some interval $I_\gamma$ of appropriate length and $\delta > 0$, where $\gamma$ is mapped to $I_\gamma \times \{0\}$ and the corresponding arc of $w$ is mapped to $(p, \eta_\gamma(p)) \in I \times (-\delta, \delta)$ for some function $\eta_\gamma$ on $I_\gamma$ because of the graphical assumption. 
 
 Denote by $h^\gamma_1, h^\gamma_2: I \to \R$
 functions such that when $(p,0) = \psi_\gamma(z'(\theta, b-\epsilon)$,
 \begin{align*}
 h^\gamma_1(p) = x_2'(\theta, b-\epsilon), \quad h^\gamma_2(p) = h(\theta).
\end{align*}  
 If we pick Hamiltonians $H_s^\gamma: I \times (-\delta, \delta) \to \R$ for $s \in [b-\epsilon, b]$, and $\gamma$ arc of $z'_{b-\epsilon} \setminus \Delta(z'_{b-\epsilon})$ such that
 \begin{align*}
 H_s^\gamma(p,q) = H_s^\gamma(p), \quad
 H^\gamma_0  = 0, \quad h^\gamma_1 + H^\gamma_1  = h^\gamma_2,\quad \text{ and }
 -\int_0^1 \frac{\p H^\gamma_s}{\p p} (\psi(p,0) (s))ds & = \eta_\gamma(p),
 \end{align*} 
 and consecutive $H^\gamma$ match at end points, then the function
 \begin{align*}
 S^1 \times [b-\epsilon, b] & \to \R^4\\
(\theta, s) & \mapsto ( \psi_\gamma^{-1} \circ \phi_s \circ \psi_\gamma \circ z'(\theta, b-\epsilon) , x_2'(\theta, b-\epsilon) + H^\gamma_s \psi_\gamma \circ z'(\theta, b-\epsilon),s)
 \end{align*}
 is the required $L''$ parametrization. We can obtain such $H^\gamma$ as the boundary conditions and the derivatives are on different coordinates.
 
 The assumption on the signs and the area means we can choose $H^\gamma_s$'s such that this is an embedding and there are no double points. We have to just ensure that if $h^\gamma_1(p_1) > h^\gamma_1(p_2)$(resp. $h^\gamma_1(p_1) < h^\gamma_1(p_2)$) for $p_1, p_2$ endpoints of some $I_\gamma$, then
 $h_1 + H_s^\gamma(p_1) > h_1 + H_s^\gamma(p_2)$(resp $<$) for all $s \in [b-\epsilon, b]$. This is possible as the assumption on signs of $K_1$ and $K_2$ implies $h_2(p_1) > h_2(p_2)$(resp $<$) whenever $h_1(p_1) > h_1(p_2)$(resp $<$). 

 Now we may use Lemma \ref{lem_gluing lagrangians smoothly} to glue $L \cap \R^4_{(-\infty,b-2\epsilon]}$, $L'$, and $L''$ together to get the required Lagrangian cobordism from $K_3$ to $K_2$. As relative exactness is a condition on the topology of $L$ and we do not change $H_2(\R^4, L)$ and $a(\p_+ L) = a(\p_- L)$, relative exactness holds for the new Lagrangian also.
\end{proof}
Note that the above Lemma implies Lemma \ref{lem_collar_1} if we allow $K_2$ to be an immersed link in $\R^2$, that is, we take the height function $h \equiv 0$. In that case we get double points at $y_2 = b$ as before. In fact, the proof of Lemma \ref{lem_relation well defined} implies the stronger statement, Lemma \ref{lem_tangle with same diagrams}.  We still include the separate proof of Lemma \ref{lem_collar_1} as we are able to write down the Hamiltonian more precisely.

\begin{lem}\label{lem_tangle with same diagrams}
Suppose we have a Lagrangian cobordism $L \subset \R^4_{[a,b]}$ with diagram of $\p_+ L$ equal to $\cD_2$ and diagram of $\p_- L$ equal to $\cD_1$. Then we can obtain a Lagrangian tangle $L' \subset \R^4_{[a,b]}$ such that diagram of $\p_+ L'$ equal to $\cD_2$ and diagram of $\p_- L'$ equal to $\cD_1$.
\end{lem}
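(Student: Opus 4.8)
The statement asks to upgrade a relatively exact embedded Lagrangian \emph{cobordism} $L$ (whose boundary diagrams are $\cD_1$ and $\cD_2$) to a Lagrangian \emph{tangle} $L'$ with the same two boundary diagrams. The plan is to combine the collar construction of Lemma \ref{lem_collar_1} with the ``change the boundary within its equivalence class'' technique of Lemma \ref{lem_relation well defined}. First I would recall that, by definition, a Lagrangian tangle differs from an embedded Lagrangian cobordism in two respects: (i) its boundary $\p L'$ must be \emph{flat}, i.e. contained in $\R^2_a \cup \R^2_b$, with the only self-intersections being transverse double points on $\p L'$, and (ii) it needs the mild interior conditions (transversality to $\R^3_t$ near the ends, relative exactness of $L' \cap \R^4_{[a+\epsilon,b-\epsilon]}$). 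The cobordism $L$ already has embedded, transverse, non-flat boundary links $\p_\pm L$ whose diagrams are $\cD_1,\cD_2$; so the task is precisely to flatten the boundary without changing the enriched knot diagrams, and to check relative exactness is preserved.

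\textbf{Key steps.} (1) Fix $\epsilon>0$ small compared to $b-a$. As in Remark \ref{rem_standard collar}, first arrange (by a preliminary collar) that $L$ is standard near its ends, so that on $[b-2\epsilon,b]$ it has a product-type parametrization $(\theta,t)\mapsto(z(\theta,t),x_2(\theta,t),t)$ with the crossings of $\pi_1(L_t)$ constant, and similarly near $a$. (2) Apply the collar attachment of Lemma \ref{lem_collar_1} at $\p_+ L$: using the Hamiltonian flow on $A = S^1\times[-\delta,\delta]$ driven by the height function $f$ of $\p_+ L$ with the cutoff $\beta$, attach a small collar $L^+ \subset \R^4_{[b-\epsilon,b]}$ (rescaled to height $\epsilon$) whose top slice $L^+_b$ is flat, i.e. $\pi_1$ is an immersion into $\R^2_b$ with transverse double points exactly at the crossings of $\cD_2$, and which agrees with $L$ near $t=b-\epsilon$. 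Because the collar flow is area-as-computed-in-Corollary-\ref{lem_derivative of area} and $f_1\equiv 0$, the resulting flat top slice has $\pi_1$-image equal to $\cD_2$ as a curve; one must still check the areas of the cut-out disks and the signs agree with $(\cD_2,\sigma_2,\cA_2)$. The sign claim is immediate from Remark \ref{rem_standard collar}: the crossings of $\pi_1(L^+_{b-\epsilon})$ (which already realize $\sigma_2$ since that is how the diagram of $\p_+ L$ was defined) are in bijection with those of $L^+_b$ and the associated signs are unchanged through the standard collar. For the areas, note that flattening the slice changes the $x_2$-coordinate but not the $(x_1,y_1)$-projection of $L^+_b$ versus $L_b = \p_+ L$ if we choose the collar Hamiltonian to be a function of the $A$-variable alone near the slice — so in fact $\pi_1(L^+_b) = \pi_1(\p_+ L)$ exactly, and the disk areas are literally those of $\cA_2$. (3) Do the mirror-image construction at $\p_- L$, attaching a collar $L^- \subset \R^4_{[a,a+\epsilon]}$ whose bottom slice is flat with diagram $\cD_1$. (4) Glue $L^-$, $L\cap \R^4_{[a+\epsilon,b-\epsilon]}$, and $L^+$ using Lemma \ref{lem_gluing lagrangians smoothly} (or directly, since they already agree on the overlap slices by the standardization in step 1) to get $L' \subset \R^4_{[a,b]}$. (5) Verify $L'$ is a Lagrangian tangle: it is compact, connected, oriented, immersed; it meets $\R^3_a\cup\R^3_b$ transversely with $\p L' = L'\cap(\R^3_a\cup\R^3_b)$ flat by construction; the only self-intersections are the transverse double points on $\p L'$ coming from the two flat end slices (the interior of each collar is embedded by the injectivity argument in Lemma \ref{lem_collar_1}, and $L\cap\R^4_{[a+\epsilon,b-\epsilon]}$ is embedded since $L$ is); the near-boundary transversality holds by the standardization; and relative exactness of $L'\cap\R^4_{[a+\epsilon',b-\epsilon']}$ for all $\epsilon'>0$ follows because $L'\cap\R^4_{[a+\epsilon',b-\epsilon']}$ deformation retracts onto $L\cap\R^4_{[a+\epsilon,b-\epsilon]}$ (no new interior homology, same $a(\p_\pm)$), which is relatively exact as $L$ is. Since $\cD_{\p_+ L'} = \cD_2$ and $\cD_{\p_- L'} = \cD_1$ by construction, this completes the proof.

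\textbf{Main obstacle.} The delicate point is step (2)–(3): ensuring that flattening the end slices genuinely does not alter the enriched knot diagrams, neither the underlying immersed curve nor the assigned disk-areas nor the corner signs. The curve and areas are handled by choosing the collar Hamiltonian to be independent of the height variable in a neighbourhood of the slice, so that $\pi_1$ is unchanged as one flattens (only $x_2$ is flattened to $0$); the signs are handled by invoking Remark \ref{rem_standard collar}, which guarantees the crossing data is locally constant through the standard collar, so the signs of $L'_{b-\epsilon}$ — which by definition of $\cD_{\p_+ L}$ realize $\sigma_2$ — persist to the flat slice $L'_b$. One should also take care that the two collars do not introduce spurious interior double points and that the gluing in step (4) is smooth, but both are exactly the content of Lemma \ref{lem_collar_1} and Lemma \ref{lem_gluing lagrangians smoothly}, so no new analysis is needed; this is why, as the paper remarks, Lemma \ref{lem_tangle with same diagrams} follows from the proof of Lemma \ref{lem_relation well defined} with the choice $h\equiv 0$.
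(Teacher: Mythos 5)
Your proposal has a genuine gap at the heart of steps (2)--(3), precisely at the point you flag as the main obstacle. You claim that the collar of Lemma \ref{lem_collar_1} can be arranged so that $\pi_1(L^+_b) = \pi_1(\p_+ L)$ exactly, by making the collar Hamiltonian ``independent of the height variable.'' This cannot work. In that construction the Hamiltonian $f$ on $A = S^1 \times [-\delta,\delta]$ restricts on $S^1 \times \{0\}$ to the height function of $\p_+ L$, which is non-constant in $\theta$ whenever $\p_+ L$ is not already flat; the Hamiltonian vector field then has a nonzero $\p_s$-component ($X^s = -\p_\theta f / h$ in local symplectic coordinates on $A$), so the flow $\phi_t$ does not preserve $S^1 \times \{0\}$ and $g\circ\phi_t(S^1\times\{0\})$ is only a perturbation of $\pi_1(\p_+ L)$. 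More fundamentally, the movie equation of Proposition \ref{prop_lag movie}, $\p_\theta x_2 = \omega(\p_t z, \p_\theta z)$, shows one cannot flatten $x_2$ while holding the Lagrangian projection $z$ fixed: $\p_t z \equiv 0$ forces $\p_\theta x_2 \equiv 0$. So any flattening collar necessarily moves the diagram at intermediate slices and at the end slice, and there is no reason the end slice would land on $\cD_2$ with its prescribed disk areas. This is exactly why Lemma \ref{lem_collar_1} as stated makes no claim about the resulting tangle's boundary diagram.

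The correct route is the one you mention only in your closing sentence but do not actually employ: specialize the proof of Lemma \ref{lem_relation well defined}. Taking the Hamiltonian isotopy $\phi$ of $\R^2$ to be the identity (since $\cD_1 = \cD_2$ as a diagram, not merely up to equivalence) eliminates the intermediate $L'$ step, and taking the target height function $h \equiv 0$ makes the $L''$ step build a movie from $L_{b-\epsilon}$ to a flat slice. The whole point of the $L''$ construction there is precisely to steer the Lagrangian projection onto the prescribed target curve $w$ while simultaneously deforming the height to the prescribed $h$: one picks Hamiltonians $H^\gamma_s$ on tubular neighborhoods of the arcs whose base-direction derivative realizes the graphical offset $\eta_\gamma$ (hitting $w$ at $t = b$) and whose value-matching realizes the height target. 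The sign-compatibility condition there (which, as that proof notes, is automatic because $\delta x_2' = \delta x_2$ has the sign dictated by $\sigma_2$ and hence by Corollary \ref{lem_derivative of area} the areas move the right way) is what guarantees the two demands are compatible and the resulting movie is embedded. You should replace your use of Lemma \ref{lem_collar_1} with this steering argument; once you do, the gluing via Lemma \ref{lem_gluing lagrangians smoothly} and the relative-exactness check in your step (5) go through as you describe.
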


We are now ready to prove Lemma \ref{lem_partial order}, except Part 3 of the partial relation statement.
\begin{proof}[Proof of Lemma \ref{lem_partial order}]
\begin{enumerate}
\item Well-definedness follows directly from Lemma \ref{lem_relation well defined}.
\item
Transitivity follows by using Lemma \ref{lem_relation well defined} and Lemma \ref{lem_gluing lagrangians smoothly}. Suppose $$[\cD_1] \prec_{L_1} [\cD_2] \text{ and }[\cD_2] \prec_{L_2} [\cD_3],$$ that is, we have Lagrangian cobordisms $L_1$ and $L_2$ such that 
$\p_- L_1$, $\p_+ L_1$, $\p_- L_2$, and $\p_+ L_2$ have diagrams $\cD_1$, $\cD_2$, $\cD_2$, and $\cD_3$, respectively. We are suppressing the notation for signs and areas. Lemma \ref{lem_relation well defined} implies we can assume $\p_+ L_1 = \p_- L_2$, and so we may glue $L_1$ and $L_2$ using Lemma \ref{lem_gluing lagrangians smoothly} to get $[\cD_1] \prec [\cD_3]$. 
\end{enumerate}

Now we prove the additional properties that make $\prec$ a partial order on exact diagrams.
\begin{enumerate}
\item  Recall that if we restrict to exact diagrams, that is, if the total signed area bound by the diagram is zero, the relatively exact condition of the Lagrangian cobordism means that it is weakly exact.

Suppose, if possible, $[\cD] \prec [\cD]$. Then, by using Lemma \ref{lem_relation well defined} we may assume that there exists an exact Lagrangian $L$ with $\p_+ L = \tau_{b-a} (\p_- L)$, where $\tau_c : \R^4 \to \R^4$ is given by $(x_1, y_1, x_2, y_2)  \mapsto (x_1, y_1, x_2, y_2 + c)$. 

Consider the quotient space $\R^4 / (z \sim \tau_{b-a} (z))$. 
Then the image, $(L/ \sim)$, of the Lagrangian $L$ under the quotient map  gives a weakly exact closed Lagrangian in $(\R^4 / \sim) \simeq \C \times V$, where $V \simeq T^* S^1$ with the appropriate symplectic structure. By Lemma \ref{lem_gluing lagrangians smoothly}, we may assume smoothness of $(L/ \sim)$.  As $\Pi_2 (\R^4, L) \cong \Pi_2(\R^4/\sim, L/\sim)$, using the push forward of the quotient map, $\omega|_{\Pi_2(\R^4, L)} \equiv 0$ implies $\omega|_{\Pi_2(\R^4/\sim, L/\sim)} \equiv 0$. So, $(L/ \sim)$ is weakly exact in $\R^4/\sim$.

On the other hand, such a weakly exact closed Lagrangian cannot exist in a symplectic manifold of the type $\C \times V$ with the product symplectic structure by \cite[Theorem $2.3.B_1$]{gro}. Hence, no such Lagrangian cobordism exists.
 
\item Anti-symmetry follows by combining anti-reflexivity and transitivity. Suppose we have two diagrams, $[\cD_1] \neq [\cD_2]$ with relations $[\cD_1] \prec_{L_1} [\cD_2]$ and $[\cD_2]  \prec_{L_2} [\cD_1]$. Then by first gluing $L_1$ and $L_2$ and then quotienting by the appropriate translation we obtain a closed exact Lagrangian in $T^* S^1 \times \C$ as in the proof of non-reflexivity, which is not possible.
\item We show in Corollary \ref{cor_no relation exists} that, for $0 < A < B$, 
$$8_+^1(A) \nprec C^{+-+}(A, B, B) \text{ and }C^{+-+}(A, B, B) \nprec 8_+^1(A).$$
\end{enumerate}
\end{proof}

\subsection{Enriched Knot Diagram Moves}\label{sec_moves}

In this section we describe moves akin to Reidemeister moves in knot theory that can be achieved via Lagrangian cobordism. We include area conditions so that it makes sense in the enriched knot diagram context. This section is included for completeness and as a natural look at existence questions alongside obstructions. 

These moves almost completely appear in \cite{lin}. We add in area conditions that are more global to the enriched knot diagram compared to the local nature of moves described in \cite{lin}, but everything described here follows from \cite{lin}. Our diagrams appear to have the opposite signs to that of \cite{lin} because Lin works with $x_2$-slices of $\R^4$ and we work with $y_2$-slices. So, we redraw all the moves with our conventions. The arguments remain analogous.

Recall that we defined the area of an enriched knot diagram $[\cD_1]$ by $a(\cD_1) = |\int_{\cD_1} x_1 dy_1|$.

\begin{prop}\label{prop_moves}
Suppose we have enriched knot diagrams $[\cD_1, \sigma_1, \cA_1]$ and $[\cD_2, \sigma_2, \cA_2]$. Suppose $\{A_1, A_2, \dots, A_k\}$ is the set of all areas of disks cut out by $\cD_1$. If the disk corresponding to $A_i$ survives, that is, if there exists a corresponding disk cut out by $\cD_2$, denote the area of the corresponding disk of $[\cD_2]$ by $A_i + \epsilon_i$.

Then $[\cD_1, \sigma_1, \cA_1] \prec [\cD_2, \sigma_2, \cA_2]$ if the following conditions hold:
\begin{enumerate}
\item  $a(\cD_1) = a(\cD_2)$.
\item  If $\epsilon_j < 0$, then $|\epsilon_j| < A_j$.
\item If a disk has only positive corners and survives, then $\epsilon_j > 0$.
\item If a disk has only negative corners and survives, then $\epsilon_j < 0$.
\item $\cD_2$ differs from $\cD_1$ by one of the moves in Figure \ref{fig:moves} assuming $\cD_1$ satisfies the area conditions written under the arrow.

\end{enumerate}
\end{prop}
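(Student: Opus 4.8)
The plan is to construct, for each Reidemeister-type move in Figure \ref{fig:moves}, an explicit Lagrangian cobordism realizing $[\cD_1,\sigma_1,\cA_1] \prec [\cD_2,\sigma_2,\cA_2]$, using the Lagrangian movie formalism of Proposition \ref{prop_lag movie} and Lemma \ref{lem: lin lagrangian }. The key observation is that Lin's Lagrangian moves from \cite{lin} already produce, for each diagram move, a family of immersed curves $z(\theta,t)$ in $\R^2$ together with a height function $h(\theta,t)$ satisfying the PDE (\ref{eqn_change in x_2}), hence a Lagrangian surface in $\R^4_{[a,b]}$; the only thing to verify is that, after translating to our sign conventions ($y_2$-slices rather than $x_2$-slices) and imposing the global area conditions (1)--(4), the resulting surface is actually \emph{embedded} and has the prescribed enriched knot diagrams at its two ends.

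First I would reduce to the case where $\cD_1$ and $\cD_2$ differ by a single elementary move, and treat the moves type by type: the ``birth/death of a small loop'' moves (Reidemeister I analogues), the ``pass a strand over/under'' moves (Reidemeister II analogues), and the triangle move (Reidemeister III analogue). For each, I would write down the family $z(\theta,t)$ interpolating between $\cD_1$ and $\cD_2$ that changes only the prescribed local picture, keeping the total signed area constant in $t$ (this is where condition (1), $a(\cD_1)=a(\cD_2)$, is used, and it is exactly the hypothesis of Lemma \ref{lem: lin lagrangian }). Then $h(\theta,t) = h(\theta,0) + \int_0^t \omega(\p_t z, \p_\theta z)\,dt$ defines the height, and Lemma \ref{lem: lin lagrangian } tells us the resulting map $\phi(\theta,t) = (z(\theta,t),h(\theta,t),t)$ is a Lagrangian immersion, and an embedding precisely when $z(\theta,t)=z(\theta',t)$ forces $h(\theta,t)\neq h(\theta',t)$. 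The content of conditions (2)--(4) is to guarantee this embeddedness: by Corollary \ref{lem_derivative of area}, the height difference across a crossing is the $t$-derivative of the area of the region bounded by the corresponding sub-arc, so the sign of $\epsilon_j$ (the net area change of a surviving disk) controls whether the two sheets separate in the $x_2$-direction as $t$ varies. A disk with only positive corners must grow ($\epsilon_j>0$) and one with only negative corners must shrink ($\epsilon_j<0$) for the crossing heights to have the right sign throughout; the bound $|\epsilon_j| < A_j$ in (2) prevents a surviving disk from degenerating mid-movie, which would create an extra tangency. I would also need to check that the collaring is standard near the ends (Remark \ref{rem_standard collar}) so that the diagrams of $\p_\pm L$ are genuinely $\cD_1$ and $\cD_2$ including their area data, and that relative exactness holds --- which follows since the cobordism will be topologically controlled (e.g. a cylinder, or built from such pieces glued by Lemma \ref{lem_gluing lagrangians smoothly}), cf. Remark \ref{rem_automatic relative exactness}.

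The main obstacle I anticipate is the \emph{global} nature of the area bookkeeping: Lin's moves are local, but changing one region's area by moving a strand necessarily changes the areas of \emph{all} adjacent regions, and one must verify that a single ambient isotopy (equivalently, a single area-preserving family $z(\theta,t)$ with the correct per-disk area changes $\epsilon_j$) can simultaneously realize all the prescribed final areas while keeping the total area fixed. Concretely, the values $\{\epsilon_j\}$ are not independent --- they must sum (with appropriate signs coming from orientations of the bounded regions) to zero for the move to be area-neutral, and for the surviving-disk constraints to be compatible one needs the signs in (3)--(4) to be consistent with this linear relation. I would handle this by an explicit construction in each move (using compactly supported Hamiltonian isotopies of $\R^2$ as in Lemma \ref{lem_equivalent defns of ekd equivalence} to first match everything except the local picture, then performing the local move), rather than an abstract argument; the verification that the height function stays injective during the local move is the one genuinely move-specific computation, and for the triangle (Reidemeister III) move it is the most delicate because three sheets are involved and one must track three pairwise height differences at once.
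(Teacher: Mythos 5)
Your overall strategy matches the paper's: build the cobordism as a Lagrangian movie via Lemma~\ref{lem: lin lagrangian } on top of Lin's move-by-move constructions, with conditions (2)--(4) controlling, via Corollary~\ref{lem_derivative of area}, the signs of the height jumps at crossings so that the movie stays embedded. You also correctly flag the global area bookkeeping as the main difficulty. But your proposed remedy has a gap: a compactly supported Hamiltonian isotopy of $\R^2$ (Lemma~\ref{lem_equivalent defns of ekd equivalence}) is area-preserving, so it leaves the per-disk areas of $\cD_1$ unchanged, and a move localized to a small disk $D$ cannot alter the areas of regions away from $D$. The composite therefore cannot achieve nonzero $\epsilon_j$ for disks far from the move's support, which the Proposition explicitly permits.

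What the paper does instead is compose with the move $R_0$. Crucially, $R_0$ is \emph{not} a Hamiltonian isotopy of $\R^2$: it is itself a Lagrangian cobordism whose $y_2$-slices realize a planar isotopy of a fixed topological diagram while the per-disk areas vary freely, subject only to the total-signed-area constraint and conditions (2)--(4) (this is what the remark following the proposition is quantifying). Writing each non-$R_0$ move as $R_0 \circ M \circ R_0$, with $M$ Lin's localized move carried out in a standard disk, simultaneously redistributes all the $\epsilon_j$'s globally and brings the support of $M$ into a fixed ball, and this is exactly the step your Hamiltonian-isotopy reduction cannot perform. Replace that step with an initial and final $R_0$ cobordism and the rest of your outline goes through.
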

\begin{figure}[h!]
\begin{center}
  \includegraphics[width = 6in]{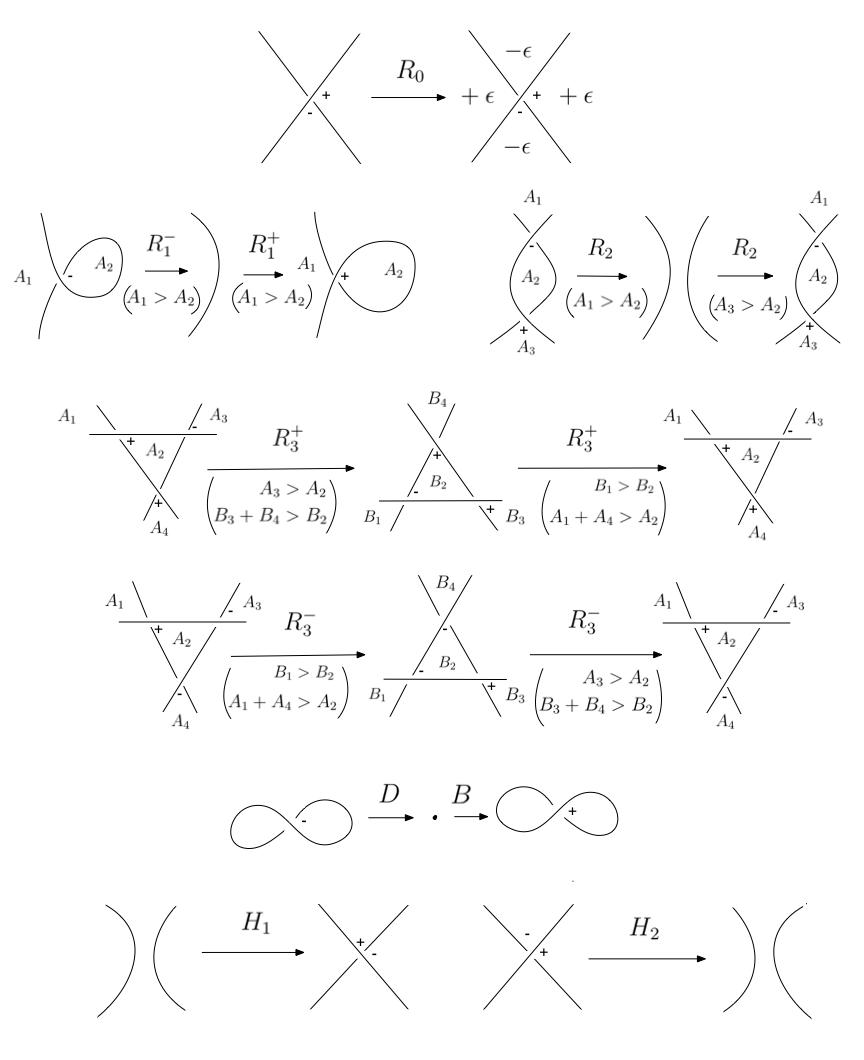}
  \end{center}
 \caption[caption]{\tabular[t]{@{}l@{}}Enriched knot diagram moves.\\ Compare with the combinatorial moves on Lagrangian diagrams\\ in \cite[Figure 11]{lin}.\endtabular}
  \label{fig:moves}
\end{figure}

\begin{rem}
The areas of disks cut out by $\cD_2$ is determined by condition 1 and which moves were performed on $\cD_1$ to obtain $\cD_2$.
For example, suppose $I \subset \{1, \dots, k \}$ is such that
\begin{align*}
\sum_{i \in I} A_i - \sum_{i \notin I} A_i = a(\cD_1).
\end{align*}
Condition 1 implies, if we only performed $R_0$, that is, if all the disks cut out by $\cD_1$ survived in $\cD_2$, then 
\begin{align*}
\sum_{i \in I} \epsilon_i - \sum_{i \notin I} \epsilon_i = 0.
\end{align*}
\end{rem}

\begin{proof}[Proof of Proposition \ref{prop_moves}]
Let us suppose that $\cD_2$ differs from $\cD_1$ by one of the moves described in Figure \ref{fig:moves} that we refer to as $M$. We show that we can construct a Lagrangian cobordism from $\cD_1$ to $\cD_2$ using Lemma \ref{lem: lin lagrangian }.
For the move $M = R_0$, this Lagrangian was described exactly in \cite{lin}.
For $M$ different from $R_0$, note that given the area conditions on $\cD_1$, we can use move $R_0$ to enclose $M$ within a disk of fixed radius. Then it is in the set up on Lin's moves. Thus, $M = R_2$, $R_3$, $H_1$, and $H_2$ are Lin's $R_0 \circ R_2 \circ R_0$, $R_0 \circ R_3 \circ R_0$, $R_0 \circ H_1 \circ R_0$, and $R_0 \circ H_2 \circ R_0$, respectively.

The birth and death moves $M = D$ and $B$ can be described as follows.
The functions
\begin{align*}
z(\theta, t) = (t\cos\theta, 2 t^2 \cos \theta \sin \theta), \,\, h(\theta, 0) = t\sin \theta
\end{align*}
for $r \in [0, T]$, gives a birth move and
\begin{align*}
z(\theta,t) = (t\cos\theta, -2 t^2 \cos \theta \sin \theta),\,\, h(\theta, 0) = t \sin \theta
\end{align*} 
gives a death move $D$. This is not a unique description. As we have seen before, the exact geometry of the movie is not unique. The moves $R_1^+$ and $R^-_1$ follow from using a piece of the birth or death moves, respectively.
\end{proof}

\section{Moduli Spaces of Holomorphic Disks}\label{sec_moduli spaces}
Our main tool for studying Lagrangian cobordisms is holomorphic disks that have boundary on Lagrangian tangles. In this section we define the holomorphic functions we work with, and then review some standard notions about moduli spaces of holomorphic curves See \cite{mcduffsalamon_fat} (Chapter 5) and \cite{AJ}. We then show compactness holds in our set up.  

We consider the complex structure $(\R^4, \ii)$ obtained by identifying $\R^4$ with $(\C^2, \ii)$. Let $\D \defeq \{z \in \C | |z| \leq 1 \}$ be the closed unit disk. For any subset of $\C$, like the unit disk $\D$, we get a complex structure on it as a subset of $\C$. We denote this sructure also by $\ii$. There should not be any confusion as these are all essentially multiplication by the same $\ii$. We denote the open unit disk by $\D^\circ \defeq \{z \in \C | |z| < 1\}$. The boundary of the unit disk is denoted by $ \p \D \defeq \{z \in \C| |z| = 1\} = S^1 $.
Fix a Lagrangian tangle, $L$, in $\R^4$.

\subsection{Holomorphic Disks}\label{subsec_hol disks}

\begin{defn}\label{defn_hol disk}
A \textbf{holomorphic disk with corners} is a function $u: (\D, \p \D) \to (\R^4, L)$ such that the following conditions hold:
\begin{itemize}
\item $u$ is holomorphic on the interior of $\D$, that is, $u$ satisfies the Cauchy-Riemann equation
\begin{align*}
du(p) \circ \ii = \ii \circ du(u(p))
\end{align*}
for all $p \in \D^\circ$.
\item There are finitely many distinct points $z_1, \dots, z_m \in \p \D$, known as \textbf{marked points} that are mapped to double points of $L$ and where $\p u$ jumps from one leaf of $L$ to another. The images of these points are called \textbf{corners} of $u$.
\item In the complement of the corner points $u$ is $C^2$-regular.
\item $u$ is an immersion. 
\end{itemize}
We denote by $\p u$ the restriction of $u$ to the boundary $\p \D$ of the disk $\D$. We can orient $\p u$ with the anti-clockwise orientation on $\p \D$, which is also the complex orientation.
\end{defn}
We make the assumption that at each marked point the disk jumps between leaves of $L$ because we do not allow marked points where $u$ is smooth. Note that unless $\p u$ jumps leaves at a marked point, it would be a smooth point. See the discussion about the trace operator in the proof of Lemma \ref{lem_interior breaking in pairs} for a proof.

\begin{defn}\label{defn_bound disk}
As in Definition \ref{defn_hol disk}, we may define holomorphic disks with corners taking values in $\R^2$ that have boundaries on any smooth curve. Given a diagram $(\cD, \sigma, \cA)$,
we say a holomorphic disk with corners $u: (\D, \p \D) \to (\R^2, \cD)$ such that each corner point of $u$ is a double point of the diagram $\cD$ is a disk \textbf{bound} by the diagram. We say it is a disk \textbf{bound by the pair} $((\cD_1, \sigma_1, \cA_1), (\cD_2, \sigma_2, \cA_2))$ if it is a disk bound by $(\cD_1, \sigma_1, \cA_1)$ or by $(\cD_2, \sigma_2, \cA_2)$.
\end{defn}
Note that the area of a disk $u$ bound by a diagram is the sum of the areas of disks cut out by the diagram that together make $u$, counted with the appropriate sign and multiplicity. 
\begin{defn}\label{defn_horizontal disk}
Suppose $L$ is a Lagrangian tangle in $\R^4$ (Definition \ref{defn_lag tangle}). 
A \textbf{horizontal disk} is a holomorphic disk with corners whose image lies completely in a complex plane $\R^2_c$ for some $c \in \R$. 

For this paper, we only consider horizontal holomorphic disks with convex corners.
\end{defn}
A holomorphic disk with corners will be horizontal if $\im(\p u) \subset \p_+ L$ or $\im(\p u) \subset \p_- L$, see Lemma \ref{lem_cpt_1} for a proof. 

We get signs at the corners of a horizontal disk just by assigning the sign of the corner as per the diagram of $\p_+ L$ or $\p_-L$. We would like to extend this notion of corner sign to all holomorphic disks in a way that is useful in describing broken disks in moduli spaces.
Recall that at each double point $q \in \Delta L \subset \p L$, the Lagrangian has an higher leaf $L^h$, and a lower leaf $L^l$.
\begin{defn}
Let $u: (\D, \partial \D) \to (\R^4, L)$ be a holomorphic disk with corner $q \in \Delta(L)$. If $\p u$ traverses the leaf $L^h$ first and then $L^l$, we say that $u$ has \textbf{positive sign} at $q$. Otherwise, we say $u$ has \textbf{negative sign} at $q$. 
More explicitly, just as in the definition of enriched knot diagram coming from a knot $K$, 
let us denote by $\lim_{\p u \to q^-} x_2$ the limit of $x_2$-values as we approach $q$ along the boundary $\p u$. Similarly, let $\lim_{\p C \to q^+} x_2$ denote the limit in the reverse direction. Then we assign
\begin{align*}
\sign_q(u) = -1 \text{ if } &\lim_{\p u \to q^-} x_2 < \lim_{\p u \to q^+} x_2, \text{ and }\\
\sign_q(u) = 1 \text{ if } &\lim_{\p u \to q^-} x_2 > \lim_{\p u \to q^+} x_2.
\end{align*}
\end{defn}
If $u$ is an immersed disk, it may pass through $q$ multiple times, but always in the same direction. So, this is well-defined.

If $u$ is a horizontal disk, the signs are part of the information of the enriched knot diagram of $\partial_-L$ or $\partial_+L$, whichever $u$ has boundary on. A horizontal disk can have both positive and negative signs at corners. On the other hand, we show in Lemma \ref{lem_signs of vert disks} that non-horizontal disks can have only positive signs at the upper boundary and only negative signs at the lower boundary of a Lagrangian tangle.

\subsection{Stable Disks}
A tree is a connected graph without cycles. We think of a tree as a finite set $T$ of vertices along with a relation $E \subset T \times T$ on $T$ such that $\alpha, \beta \in T$ are related, that is, $ \alpha E \beta$, if and only if they are connected by an edge. An \textbf{$m$-labeling} $\Lambda$ of $T$ is a function
\begin{align*}
\Lambda: I &\to T\\ i &\mapsto \alpha_i,
\end{align*}
where $I = \{1, \dots, m\}$. Such an $m$-labeling partitions $I$ into disjoint subsets
\begin{align*}
\Lambda_\alpha = \{i \in I | \alpha_i = \alpha\}.
\end{align*}
\begin{figure}[h!]
\begin{center}
  \includegraphics[width = 5in]{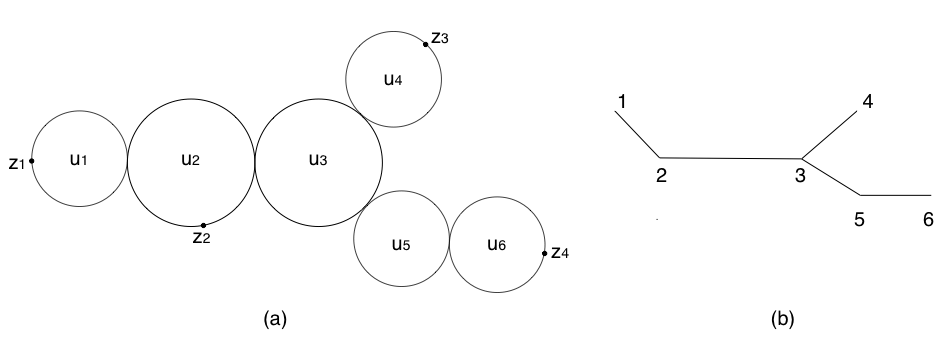}
  \end{center}
 \caption{(a) Stable disk modelled on tree $T$ (b) Tree $T$}
  \label{fig:stable disk}
\end{figure}
\begin{defn}[Stable Disks]\label{defn_stable map} Let $n \geq 0$ be a non-negative integer. Let $(T, E, \Lambda)$ be an $n$-labelled tree. A \textbf{stable holomorphic disk with ${\bf m}$ marked points (modelled over ${\bf (T, E, \Lambda)}$)}, or simply a \textbf{stable disk}, is a tuple
\begin{align*}
({\bf u}, {\bf z}) = (\{u_\alpha\}_{\alpha \in T}, \{z_{\alpha\beta}\}_{\alpha E \beta}, \{ \alpha_i, z_i\}_{1\leq i\leq m}),
\end{align*}
consisting of a collection of holomorphic disks with corners $u_\alpha: (\D, \partial \D) \to (\R^4, L)$ labelled by vertices $\alpha \in T$, a collection of \textbf{nodal points} $z_{\alpha\beta} \in \partial \D$ labelled by the oriented edges $\alpha E \beta$, and a sequence of $m$ \textbf{marked points} $z_1, \dots, z_m \in \partial \D$, such that the following conditions are satisfied. 
\begin{itemize}
\item (NODAL POINTS) If $\alpha, \beta \in T$ with $\alpha E \beta$, then $u_\alpha (z_{\alpha \beta}) = u_\beta (z_{\beta \alpha})$. The set of nodal points on the $\alpha$-disk is denoted by \begin{align*} Z_\alpha = \{z_{\alpha \beta } | \alpha E \beta\}.\end{align*}
\item (SPECIAL POINTS) For every $\alpha \in T$ the nodal points $z_{\alpha \beta}$ and the marked points $z_i$ (for $\alpha_i = \alpha$) are pairwise distinct. These are called \textbf{special points}. We denote the set of special points on the $\alpha$-disk by
\begin{align*}
Y_\alpha = Z_\alpha \cup \{z_i| \alpha_i = \alpha\}.
\end{align*}
\item (STABILITY) If $u_\alpha$ is constant function then $\# Y_\alpha \geq 3$. 
\item (HOLOMORPHICITY) The maps $u_\alpha$ are holomorphic on the complement of the special points, i.e.
\begin{align*}
u_\alpha|_{\D \setminus Y_\alpha} : \D \setminus Y_\alpha \to \R^4
\end{align*}
is holomorphic. 
\end{itemize}
(Compare with \cite[Definition 5.1.1]{mcduffsalamon_fat}.)
\end{defn}

We exclude marked points on the interior of any disk because they would only arise if we allowed sphere bubbles and sphere bubbles cannot form inside $\R^4$, as $H_2(\R^4) = 0$.

The domain of each of the maps $u_\alpha$ is the closed unit disk $\D = \{z \in \C | |z|^2 \leq 1 \}$. We fix the holomorphic structure on the domain as the one coming from the standard complex structure on $\C$. The domain of the stable holomorphic map $({\bf u,z })$ can be represented as the quotient 
\begin{align*}
\Sigma = T \times \D / \sim
\end{align*}
where the equivalence relation on $T \times \D$ is given by $(\alpha, z) \sim (\beta, w)$ if and only if either $\alpha = \beta$ and $z = w$ or $\alpha E \beta$ and $z_{\alpha \beta} = w_{\beta \alpha}$. The domain $\Sigma$ gets the quotient holomorphic structure which we denote by $j_\Sigma$. We denote the map with domain $\Sigma$ by $u : \Sigma \to \R^4$

Let us denote the set of singularities of $\Sigma$, namely the nodes, by $\rm{si}(\Sigma)$.
We will be considering domains $\Sigma$ that have \textbf{genus zero}, by which we mean that the double $\Sigma \cup_{\partial \Sigma} \bar{\Sigma}$ is a genus $0$ compact Riemann surface whose only singularities are nodes. Equivalently, this means that there is a deformation (in the sense of Hummel \cite[Chapter V, Section 1]{hummel} ) $f : \D \to \Sigma$ that is a continuous surjective map with the following properties.
\begin{enumerate}
\item The preimage $f^{-1}(\{z\})$ of each singular point, $z \in \Sigma$, is an arc with end points on $\partial \D$.
\item $f|_{\D \setminus f^{-1}(\rm{si} \Sigma)}$ is a diffeomorphism onto $\Sigma \setminus \rm{si}(\Sigma)$.
\end{enumerate}
In \cite{hummel}, they allow the preimage $f^{-1}(\{z\})$ of a singular point $z \in \Sigma$ to also be a simple closed loop, but we need not consider that case because we do not have sphere bubbles.
 
Recall that, $\ii$ and $\omega$ define a Riemannian metric on $\R^4$ as $\ii$ is compatible with the symplectic structure $\omega$. This metric is equal to the standard Riemannian metric. Using this, we define the energy of a $j_\Sigma$-holomorphic curve $u: (\Sigma, \p \Sigma) \to (\R^4, L)$ defined on a compact Riemann surface with boundary $(\Sigma, j_\Sigma, d\vol_\Sigma)$, where the boundary is mapped to a Lagrangian $L$, to be
\begin{align*}
E(u) \defeq \frac{1}{2} \int\limits_\Sigma |du(z)|^2 d\vol_\Sigma = \int\limits_\Sigma u^*\omega.
\end{align*}
Hence, the energy of a $j_\Sigma$-holomorphic curve defined on a closed Riemann surface with boundary, if the boundary is mapped to a Lagrangian, is a topological invariant.
If $({\bf u,z})$ is a holomorphic disk tree we denote the \textbf{energy} of ${\bf u}$
\begin{align*}
E({\bf u}) = \sum_{\alpha \in T} E(u_\alpha).
\end{align*}
Thus, the energy $E({\bf u})$ of ${\bf u}$ vanishes if and only if all the maps $u_\alpha$ are constant.

\subsection{Boundary Data and Corner Points}
In the case the Lagrangian is embedded we can take appropriate isomorphism classes of $({\bf u,z})$ to define moduli spaces. In our case, the Lagrangian is immersed. So, one needs to keep track of some extra information. In this section, we repeat a small construction from \cite[Section 4]{AJ} with simplifying assumptions that suffice for our purposes.

For $({\bf u,z})$ as above we would like to think of $\partial \Sigma$ as a smooth circle, but this is not true if $\Sigma$ has nodes and $\im(u)$ contains points of $\Delta(L)$. 
Let $S^1 = \{z \in \C : |z| = 1 \}$ be the unit circle in $\C$ with the anti-clockwise orientation. The boundary of $\partial \Sigma$ has the orientation induced by the complex structure of each copy of the disk $\D$, and there is a continuous and orientation preserving map $l : S^1 \to \partial \Sigma$ unique up to reparametrization such that
\begin{itemize}
\item the inverse image of a singular point, that is each node, of $\partial \Sigma$ consists of two points;
\item the inverse image of a smooth point of $\partial \Sigma$ consists of one point.
\end{itemize}

The main observation here is that we can define this map by following along the boundary $\partial \Sigma$, and at each singular point making the choice of branch that is consistent with the orientation. We also need to make sure we do not miss any part of $\p \Sigma$. Write $\zeta_i \defeq l^{-1} (z_i)$ for marked points $z_i$, $i = 1, \dots, m$.

 For a stable disk, $({\bf u,z})$, as above, $u \circ l$ is a continuous map $S^1 \to L$. Recall that for any immersed Lagrangian, $L$, there exists an immersion $\iota: \tilde{L} \to \R^4$ from a surface $\wt L$ with image $\iota(\tilde{L}) = L$. The map $u\circ l$ can not always be lifted locally to a continuous map $\bar{u} : S^1 \to \tilde{L}$ with $\iota \circ \bar{u} \equiv u \circ l$. 
 For a point $q \in \Delta(L) \subset \R^4$ we have $\iota^{-1}(q) = \{q_+, q_-\}$, that is, there are two points, $q_+ \neq q_-$, in $\tilde{L}$ that map to one point $q$ under the immersion $\iota$, and $L$ has two sheets near $q$ that are the images under $\iota$ of disjoint open neighbourhoods of $q_+$ and $q_-$ in $\tilde{L}$.

If $u \circ l(\zeta) = q$ for some $\zeta \in S^1$, it may be the case that $u \circ l$ jumps at $\zeta$ from one sheet of $L$ to another near $q$ in $\R^4$. Then $u \circ l$ cannot be lifted to a continuous map $\bar{u} : S^1 \to \tilde{L}$ near $\zeta$, since $\bar{u}$ would have to jump discontinuously between $q_+$ and $q_-$ near $\zeta$. We make the following definition so that we can keep track of such jumps. 

We consider $(u, z)$ in which $u \circ l$ jumps at $z_i$ between two sheets of $\iota(L)$ in this way for all $i \in \{1, \dots, m\}$. 

\begin{defn}(Stable disks with corners) Let $L$ be an immersed Lagrangian coming from immersion $\iota: \wt L \to \R^4$. Define $R$ to be the set of ordered pairs $(q_-, q_+) \in \tilde{L} \times \tilde{L}$ such that $q_-\neq q_+$ but $\iota(q_-) = \iota(q_+)$. 
Fix a map $\lambda: \{1, \dots, m\} \to R$. Consider a tuple $({\bf u, z}, \lambda, l, \bar{u})$, where $({\bf u,z})$ is a stable disk as in Definition \ref{defn_stable map}, $l : S^1 \to \partial \Sigma$ is a continuous and orientation preserving map as above, and $\bar{u}: S^1 \setminus \{z_1, \dots, z_m\} \to \tilde{L}$ is a continuous map, satisfying the following conditions:
\begin{itemize}
\item $z_1, \dots, z_m$ are ordered anti-clockwise on $S^1$;
\item $\iota \circ \bar{u} \equiv u \circ l$ on $S^1 \setminus \{z_1, \dots, z_m\}$; and
\item $(\lim_{\theta \to 0^-} \bar{u}(e^{\rm{i}\theta} z_i), \lim_{\theta \to 0^+} \bar{u}(e^{\rm{i} \theta} z_i))  = \lambda(i)$ in $R$ for all $i \in \{1, \dots, m\}$.
\end{itemize}
Such a tuple $(u,z,\lambda, l, \bar{u})$ is called a \textbf{stable map with corners}. (Compare with \cite[Definition 4.2]{AJ}.) Notice that we are only specifying what happens at the marked points of $u$. At the nodes, a priori we might get a jump between leaves, but they do not appear because of how $l$ was chosen.

Note that we could have that $u \circ l$ jumped leaves of $L$ only at some of the marked points. This is the case in \cite[Definition 4.2]{AJ}. In that case we would consider $\lambda: I \to R$ for some $I \subset \{1, \dots, m\}$ and refer to $z_i$ for $i \in I$ as pre-corner points. For our case, we only consider the case when $I = \{1, \dots, m \}$. So, we drop the name pre-corner point and call them marked points.
\end{defn}

\subsection{Reparametrization Groups}
\begin{defn}\label{equivalence of stable maps}
Two stable disks, $({\bf u,z})$ and $({\bf \tilde{u}, \tilde{z}})$, modelled over trees, $T$ and $\tilde{T}$, respectively, are called \textbf{equivalent} if there exists a tree isomorphism, $f: T \to \tilde{T}$, and a function, $T \to G \defeq \Aut(\D), \alpha \mapsto \phi_\alpha$, which assigns to each vertex an automorphism of the disk, such that 
\begin{align*}
\tilde{u}_{f(\alpha)} \circ \phi_\alpha = u_\alpha,\quad \tilde{z}_{f(\alpha)f(\beta)} = \phi_\alpha(z_{\alpha\beta}), \quad \tilde{z}_i = \phi_{\alpha_i}(z_i).
\end{align*}
\end{defn}
We can think of $f$ as a map that assigns to each $\phi_\alpha \in Aut(\D)$ its target disk, $\D_{f(\alpha)}$,  and $\phi_\alpha$ is a map, $\phi_\alpha: \D_\alpha \to \D_{f(\alpha)}$. (Compare with \cite[Definition 5.1.4]{mcduffsalamon_fat}.)

Note that by our definition, if we have a holomorphic disk with corners, the tree that it is modelled on is a single vertex $\{v\}$. So, $f: \{v\} \to \{v\}$ is the identity map and the only information for equivalence is the element $\phi \in \Aut \D$ such that 
\begin{align*}
\wt u \circ \phi = u, \quad \phi(z_i) = z_i.
\end{align*}
If the number of marked points is high, namely greater than or equal to $3$, the identity map $\D \to \D$ is the only candidate for $\phi$.

For every tree, $T$, there is an associated group, $G_T$, that acts on the set of stable disks modelled on $T$ and whose orbit space is the corresponding set of equivalence classes. The elements of $G_T$ are tuples  
\begin{align*}
g = (f, \{\phi_\alpha\}_{\alpha \in T}),
\end{align*}
where $f$ is an automorphism of $T$ such that 
\begin{align*}
\Lambda_\alpha = \Lambda_{f(\alpha)} \text{ and } \phi_\alpha \in \Aut(\D) \text{  for }\alpha \in T.
\end{align*}
The group operation is given by composition,
\begin{align*}
g' \cdot g = (f' \circ f, \{ \phi'_{f(\alpha)} \circ \phi_\alpha \}_{\alpha \in T} ). 
\end{align*}

We would like to consider the subgroup of $G$ that is compatible with the corner points and boundary data.
\begin{defn}\label{equivalence of stable maps with corners}
Two stable disks with corners $({\bf u, z},\lambda,l,\bar{u})$ and $({\bf u', z'},\lambda',l',\bar{u}')$ \\
(where the corresponding stable disks $({\bf u,z})$ and $({\bf u', z'})$ are modelled over trees $T$ and $T'$ respectively) are called \textbf{equivalent} if all of the following conditions hold: 
\begin{itemize}
\item $({\bf u,z})$ and $({\bf u',z'})$ are equivalent as stable disks and the equivalence is given by some $(f, \{\phi_\alpha\}_{\alpha \in T}) \in G_T$;
\item there exists an orientation preserving homeomorphism $ \bar{\phi} : S^1 \to S^1$ such that 
\begin{align*}
\phi_\alpha \circ l & = l' \circ \bar{\phi} & \text{ on $S^1 \setminus \{z_1 , \dots, z_m \} \cap \D_\alpha$, and }\\
\bar{u}' \circ \bar{\phi} & = \bar{u} & \text{ on $S^1 \setminus \{z_1 , \dots, z_m\}$. }
\end{align*}
\end{itemize} (Compare with \cite[Definition 4.2]{AJ}.)
\end{defn}

\subsection{Definition of Moduli Spaces}\label{defn_moduli space}
Fix a relative homology class, $A \in H_2(\R^4, L;\Z)$, and a map, $\lambda : \{1, \dots, m\} \to R$. 
A stable disk with $m$-marked points $({\bf u,z})$ is said to \textbf{represent the class A} if
\begin{align*}
A = \sum_{\alpha \in T} u_{\alpha*}[\D],
\end{align*}
where $[\D] \in H_2(\D, \partial \D; \Z)$ is the fundamental class.
Note that the energy of the stable curve, $E({\bf u})$, depends only on the class, $A \in H_2(\R^4, L; \Z)$, it represents.

We denote the {\bf moduli space of unparametrized disks in the class $\bf A$ with $\bf m$ marked points} by
\begin{align*}
\ol \M_{m}(A;L) = \{({\bf u,z }) & \text{ stable holomorphic disk with} \\
& m \text{ marked points that has boundary on } L \}/\sim 
\end{align*}
where $\sim$ is the equivalence defined in Definition \ref{equivalence of stable maps}.
Similarly, we get {\bf moduli space of unparametrized disks in the class $\bf A$ with $\bf m$ corners} by
\begin{align*}
\ol \M_{m}(A, \lambda; L) = \{({\bf u, z},\alpha, l, \bar{u}) & \text{ stable holomorphic disk that has boundary on } L \text{ and } \\
& m \text{ corners labelled by } \lambda\}/\sim
\end{align*}
where $\sim$ is the equivalence in Definition \ref{equivalence of stable maps with corners}. (Compare this definition of $\ol \M$ with definitions in \cite[Section 5.1]{mcduffsalamon_fat}.) We sometimes denote elements just by $[{\bf u, z}]$ or $[{\bf u},z_1, \dots, z_m]$ for ${\bf z} = (z_1, \dots, z_m)$, but $\alpha, l$, and $\bar{u}$ are always included in the given information.

For each labelled tree, $T = (T, E, \Lambda)$, and homology class, $A \in H_2(\R^4, L; \Z)$, we denote the equivalence classes of stable disks modelled on $T$ and representing $A$ by
\begin{align*}
\ol \M_{T} (A;L) = \{({\bf u,z }) \in \ol  \M_{n}(A) | \text{modelled on } T\}/ G_T.
\end{align*}
Note that the spaces $\ol \M_{T}(A;L)$ correspond to decompositions of $A$ into
\begin{align*}
A = \sum_{\alpha \in T} A_\alpha
\end{align*}
of integral relative homology classes.
 
If there is only one Lagrangian in the discussion, then we drop the $L$ from the notation as well. We sometimes use a representative holomorphic disk to ``define" the moduli space. What we mean is, suppose we have $(u, {\bf z})$, a particular fixed holomorphic disk with corners in $\Delta(L)$. Then, $u$ gives us a homology class $A \defeq [u] \in H_2(\R^4, L)$ and also a map $\lambda: \{1, \dots, m\} \to \Delta(L)$, namely, $\lambda(j) = u(z_j)$. We denote
\begin{align*}
\ol \M(u) \defeq \ol \M_{m}(A, \lambda).
\end{align*}
We sometimes refer to this moduli space as the \textbf{space of variations of $u$}. 

Let us denote by $\M(u)$, $\M(A, \lambda)$, the corresponding moduli space with only genuine holomorphic disks with corners.

\subsection{Gromov Convergence}
Suppose $X$ is a compact subset of $\D$.
A sequence of maps $v^\nu : \D \to \R^4$ is said to  \textbf{converge u.c.s.} (uniformly on compact sets) on $\D \setminus X$ to $v : \D \to \R^4$ if it converges to $v$ in the $C^\infty$-topology on every compact subset of $\D \setminus X$.

For a tree $T$ and $\alpha, \beta \in T$ the interval $[\alpha, \beta] \subset T$ denotes the set of vertices along the path of edges connecting $\alpha$ and $\beta$. When $\alpha E \beta$, we denote by $T_{\alpha \beta}$ the subtree containing $\beta$ after removing the edge connecting $\alpha$ and $\beta$, i.e. $T_{\alpha \beta} \defeq \{\gamma \in T | \beta \in [\alpha, \gamma] \}$. If $u$ is a stable disk modelled on $T$ with $\alpha E \beta$, we denote 
\begin{align*}
m_{\alpha \beta} (u) \defeq \sum_{\gamma \in T_{\alpha \beta}} E(u_\gamma).
\end{align*}
\begin{figure}
\begin{center}
\includegraphics[width=5in]{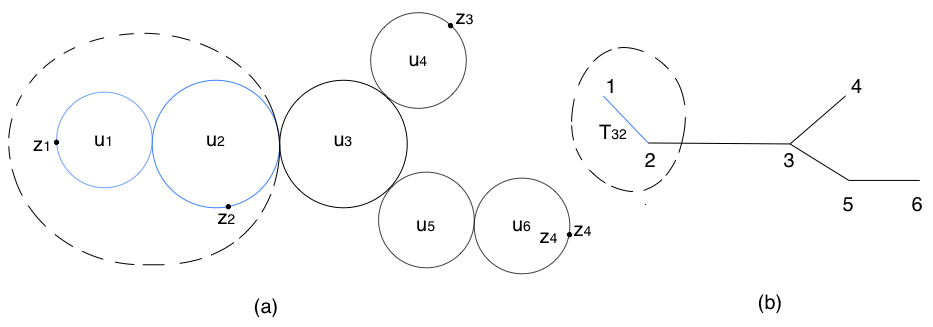}
\end{center}
\caption[caption]{\tabular[t]{@{}l@{}}(a) Stable disk modelled on subtree $T_{32}$ is encircled by dotted lines,\\ (b) Subtree $T_{32}$\endtabular}
  \label{fig:subtree disk}
\end{figure}

\begin{defn} (Gromov convergence)
Let
\begin{align*}
({\bf u,z}) = (\{u_\alpha\}_{\alpha \in T}, \{z_{\alpha\beta}\}_{\alpha E \beta}, \{ \alpha_i, z_i\}_{1\leq i\leq m}) 
\end{align*}
be a stable disk and let $u^\nu : (\D, \partial \D) \to (\R^4, L)$ be a sequence of holomorphic maps with $m$ distinct marked points $z_1^\nu, \dots, z_m^\nu \in \partial \D$. The sequence $(u^\nu, {\bf z}^\nu) = (u^\nu , z_1^\nu, \dots, z_m^\nu)$ is said to \textbf{Gromov converge} to the {\bf Gromov limit}, $({\bf u,z})$, if there exists a sequence of sets $\{\phi_\alpha^\nu\}_{\alpha \in T}^{\nu \in \mathbb{N}}$ of disk automorphisms $\phi_\alpha^\nu \in \Aut \D$, such that the following hold:
\begin{itemize}
\item (MAP) For every $\alpha \in T$ the sequence $u_\alpha^\nu \defeq u^\nu \circ \phi^\nu_\alpha : (\D, \partial \D) \to (\R^4, L)$ converges u.c.s. to $u_\alpha$ on $\D \setminus Z_\alpha$.
\item (ENERGY) If $\alpha E \beta$, then
\begin{align*}
m_{\alpha \beta} (u) = \lim_{\epsilon \to 0} \lim_{\nu \to \infty} E(u_\alpha^\nu; N_\epsilon(z_{\alpha \beta}))
\end{align*}
where $E(u_\alpha; N_\epsilon(z_{\alpha \beta})) = \int_{N_\epsilon(z_{\alpha\beta})} (u_\alpha^{\nu})^* \omega$ is the energy of $u_\alpha^\nu$ concentrating in an epsilon neighbourhood around $z_{\alpha \beta}$ in $\D$. As $z_{\alpha \beta} \in \partial \D$, $N_\epsilon (z_{\alpha\beta}) \defeq \D \cap B_\epsilon (z_{\alpha\beta})$ for the ball, $B_\epsilon (z_{\alpha\beta})$, of radius $\epsilon$ around $z_{\alpha\beta}$ in $\C$.
\item (RESCALING) If $\alpha E \beta$, then the sequence $\phi^\nu_{\alpha\beta} \defeq (\phi^\nu_\alpha)^{-1} \circ \phi_\beta^\nu$ converges to $z_{\alpha \beta}$ u.c.s. on $\D \setminus \{z_{\beta \alpha}\}$.
\item (MARKED POINTS) $z_i = \lim_{\nu \to \infty} (\phi^\nu_{\alpha_i})^{-1} (z^\nu_i)$ for $i = 1, \dots, m$.
\end{itemize}
\end{defn}
It can be shown that for a given sequence, the Gromov limit limit is unique. Gromov  convergence defines a Hausdorff topology on $\ol \M_{m}(A, \lambda)$ called the ${\bf C^\infty}${\bf -topology}. We do not include a proof of Hausdorffness here but
proofs of similar statements can be found in \cite[Section 5.4]{mcduffsalamon_fat}.

\subsection{Gromov Compactness}
\begin{thm}[Gromov compactness]\label{thm_cptness} Let $L \subset \R^4_{[a,b]}$ be a Lagrangian tangle. Fix $m \in \Z$, $A \in H_2(\R^4 , L)$, and a map $\lambda: \{1, \dots, m\} \to \Delta(L)$. Then, the moduli space $\ol \M_{m}(A, \lambda; L)$ with the $C^\infty$-topology is compact.
\end{thm}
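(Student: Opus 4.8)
The plan is to run the standard Gromov compactness scheme for $\delbar$-curves with Lagrangian boundary (as in \cite{mcduffsalamon_fat}, Chapter~5), adapted to the immersed boundary and the corners following \cite{AJ}. Fix a sequence $({\bf u}^\nu, {\bf z}^\nu)$ in $\ol\M_m(A,\lambda;L)$. Since every element represents the class $A$, all have the same energy $E({\bf u}^\nu) = \om(A)$, so energy is automatically bounded; the task is to extract a subsequence that Gromov converges to a stable disk with corners, still in the class $A$ and with corner data $\lambda$.

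\medskip
\noindent\textbf{Step 1 (confinement).} Because the target $\R^4$ is non-compact, I would first prove an a priori $C^0$-bound independent of $\nu$. Writing $u = (u_1,u_2)\colon \D \to \C\oplus\C$, each $u_j$ is holomorphic, so $x_1\circ u$, $y_1\circ u$, $x_2\circ u$, $y_2\circ u$ are harmonic; since $L$ is compact these four functions are bounded on $L$ by a constant depending only on $L$, and since $\p u$ lands in $L$ the maximum principle confines the image of \emph{every} holomorphic disk with boundary on $L$ to a fixed compact set $K = K(L)\subset\R^4$ (for the last coordinate one even gets $y_2\circ u\in[a,b]$). Hence all further analysis happens over $K$.

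\medskip
\noindent\textbf{Step 2 (bubbling and the corner quantum).} On any component where $\|du^\nu\|_{L^\infty}$ stays bounded, elliptic bootstrapping for $\delbar$ together with boundary regularity (valid wherever $L$ is an honestly embedded Lagrangian, i.e.\ away from the finitely many double points) gives $C^\infty_\loc$-subconvergence off finitely many special points. Where the gradient blows up I would rescale at points of energy concentration in the usual way: an interior rescaling limit would be a non-constant holomorphic sphere in $\R^4$, impossible since $H_2(\R^4)=0$, while a boundary rescaling limit is a non-constant holomorphic disk with boundary on $L$ (or on a single sheet of $L$ near a double point). The new ingredient is a monotonicity/energy-quantization lemma near the transverse double points of $\p L$: transversality of the two sheets forces any holomorphic disk with a corner there, and any bubble forming near there, to carry at least a definite energy $\hbar = \hbar(L) > 0$. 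Combined with the fixed total $\om(A)$, this bounds the number of bubbles and nodes, so the rescaling procedure terminates and yields a tree $T$.

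\medskip
\noindent\textbf{Step 3 (assembly and verification).} I would then check the Gromov limit is a genuine \emph{stable disk with corners}: the $m$ marked points converge to $m$ boundary points of the nodal domain, collisions of marked points with each other or with a node forcing further bubbling that has already been accounted for in Step~2; the jump data $\bar u$ pass to the limit by the confinement near double points coming from the monotonicity lemma, so the limiting corners are still labelled by $\lambda$; the $\epsilon$-regularity estimate rules out energy escaping into the necks, giving conservation of energy and hence $A = \sum_{\alpha\in T} u_{\alpha*}[\D]$; unstable constant components are collapsed to achieve $\#Y_\alpha\ge 3$ on ghosts; and one checks the non-constant limiting components are again immersions (or verifies this is not needed for the compactified space). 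Finally I would confirm that the convergence just produced is exactly Gromov convergence in the sense of the \textbf{MAP}, \textbf{ENERGY}, \textbf{RESCALING}, \textbf{MARKED POINTS} conditions, so $\ol\M_m(A,\lambda;L)$ is sequentially compact; since its $C^\infty$-topology is Hausdorff and second countable, sequential compactness gives compactness.

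\medskip
\noindent\textbf{Main obstacle.} The crux is the corner analysis in Steps 2--3: establishing the local monotonicity/energy-quantization estimate near the transverse double points of $\p L$ and showing the corner and jump data $\bar u$ survive in the limit without corners colliding or degenerating. This is precisely where the immersedness of $L$ enters, and where one must adapt the local analysis of \cite{AJ}; the remainder is a routine transcription of the embedded $H_2(\R^4)=0$ case, using \cite{hummel} to organize the nodal degenerations.
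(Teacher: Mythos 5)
Your proposal re-derives the Gromov compactness machinery from scratch, whereas the paper instead reduces to the Akaho--Joyce compactness theorem for immersed Lagrangians by a short preprocessing step: extend $L$ to a slightly larger tangle $L'$ via Lemma~\ref{lem_collar_2} with $L' \cap \R^4_{[a,b]} = L$ and the same double-point set, then observe (your Step~1) that the maximum principle confines every disk's image to $\R^4_{[a,b]}$, hence a fixed positive distance from $\p L'$ and inside a compact set; at that point the $\cite{AJ}$ analysis applies verbatim and the limit is automatically back in $\ol\M(A,\lambda;L)$.

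The genuine gap in your outline is that you never address the boundary $\p L$ of the Lagrangian tangle. Your Step~2 treats $L$ as though it were an immersed Lagrangian without boundary as in \cite{AJ}: you rescale at points of energy concentration and obtain sphere or disk bubbles, but say nothing about what happens when the concentration point lies on an arc of $\p\D$ whose image approaches $\p L = L\cap(\R^3_a\cup\R^3_b)$. A priori such a bubble could ``fall off'' the edge of $L$ or fail to have well-defined boundary conditions. This is exactly the issue the paper's extension to $L'$ is designed to neutralize -- after extending, $\p L$ becomes an interior slice of $L'$ and the maximum principle guarantees nothing ever gets close to $\p L'$. If you want to avoid the extension and argue directly, you need an extra lemma (the paper's Lemma~\ref{lem_cpt_1} is the relevant ingredient: any disk whose boundary meets $\p L$ at a smooth point is horizontal) to show that boundary concentration near $\p L$ still produces a bubble with boundary on $L$, and this step is not in your proposal. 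Everything else -- confinement, no sphere bubbles since $H_2(\R^4)=0$, energy quantization at corners, assembly of the tree and stability of ghosts, and conservation of class and corner data -- is correctly identified, though it is considerably more work than the paper's reduction.
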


\begin{proof}
It is sufficient to show sequential compactness. Let $u^\nu: (\D, \partial \D) \to (\R^4, L)$ be a sequence of holomorphic disks with marked points $z^\nu = (z^\nu_1, \dots, z^\nu_m)$, where $(z^\nu_1, \dots, z^\nu_m)$ is a sequence of $m$-tuples of pairwise distinct points on $\partial \D$, with $u^\nu(z_i^\nu)$ to be double points of $L$. Then, $(u^\nu, z^\nu)$ has a Gromov convergent subsequence.

This is essentially the same compactness as in \cite[Section $4.1$]{AJ}, but our Lagrangian has boundary (with transverse double points) and the ambient manifold is not compact. However, we can adapt their argument to our situation as follows.

To apply the compactness results by M. Akaho and D. Joyce in \cite[Section 4.1]{AJ}, we first extend our Lagrangian by attaching small collars to $\partial_+ L$ and $\partial_- L$ as in Lemma \ref{lem_collar_2} to get a slightly larger Lagrangian, $L' \supset L$, with the same set of double points. In particular, we have $L' \cap \R^4_{[a ,b]} = L$. We denote by $\ol \M(A, \lambda; L')$ the moduli space of stable disks with corners that have boundary on $L'$. Note that $\{[u,z] \in \ol \M(A, \lambda; L') | \rm{Im}(u) \subset \R^4_{[a ,b]}\} = \ol \M(A, \lambda; L)$. Now, we argue that if we restrict our attention to those elements of $\ol \M(A, \lambda; L')$ that have image in $\R^4_{[a ,b]}$, we get a sequentially compact space.

Consider sequence $[(u^\nu, z^\nu)] \in \ol \M(A, \lambda; L)$. 
Note that $$\rm{Im}(\partial u^\nu) \subset L \subset \R^4_{[a,b]} \defeq \{a \leq y_2 \leq b\}$$ implies, by the maximum principle for harmonic functions, $\rm{Im}(u^\nu) \subset \R^4_{[a,b]}$ and therefore the entire sequence of holomorphic disks have image outside an $\epsilon$-neighbourhood of $\partial L'$ for some $\epsilon \geq 0$. As, $L$ is the image of a compact surface with boundary, it is contained in a compact set. In particular, there exists $a_i, b_i \in \R$ for $i = 1, \dots, 4$ such that $L \subset K = [a_1, b_1] \times \dots \times [a_4, b_4]$ and all of the $u^\nu$ have image inside $K$. These two observations say that arguments for compactness carried out in \cite{AJ} or \cite{liu} will carry through and $[u^\nu, z^\nu]$ has a convergent subsequence with limit in $\ol \M(A, \lambda ; L')$. As, convergence in the Gromov sense also gives convergence of the images, the limit will also have image within $\R^4_{[a,b]}$ and therefore be contained in $\ol\M(A, \lambda; L)$.
\end{proof}

\section{Dimension Calculations and Automatic Transversality}\label{sec_dimension plus automatic}
In this section, we cover some analytical results regarding the moduli spaces $\M(u)$ we defined. For this section let $L \subset \R^4_{[a,b]}$ be a Lagrangian tangle.

\subsection{Dimension Calculations}\label{sec_dimension calculations}
In this section, we recall index counts for holomorphic disks with corners from
Suppose $u$ is a horizontal disk (Definition \ref{defn_horizontal disk}). Then, the tangent bundle of $u$ is canonically identified with $\C \times \{0\} \subset \C \times \C \simeq u^*(T\R^4)$. So, the normal bundle is canonically identified with $\{0\} \times \C \subset \C \times \C$. The real line bundle $TL$ along $\p \D$ also splits into $T L = T^1 L \times T^2 L$, where $T^1 L = \pi_1(TL)$ and $T^2 L = \pi_2(TL)$ for $\pi_j: \R^4 \to \R^2, (x_1, y_1, x_2, y_2) \mapsto (x_j, y_j)$. The Maslov index of the bundle pair $(\C, T_2L)$ is called the \textbf{normal Maslov index}, $\mu_2$. Note that the path $T^2 L|_{u(\theta)}$ for $\theta \in S^1 = \p \D$ in the Lagrangian Grassmanian is discontinuous at each marked point as the boundary jumps between leaves of the Lagrangian. We make the path $T^2 L|{u(\theta)}, \theta \in S^1$ continuous by adding the ``canonical short path" that is obtained by rotating in the clockwise (negative K\"ahler) direction. To get $\mu_2$, we count the number of half-rotations of $T^2 L$ (after completing using the canonical short paths) as we go along $\p u$ in the orientation given by the complex structure, that is, anticlockwise on $\p \D$.

We are considering the dimension of the moduli space, $\M(u)$, of unparametrized holomorphic disk with corners. The (virtual) dimension of the moduli space defined in \ref{defn_moduli space} is given by
\begin{align*}
\dim \M(u) = \mu_2(u) + 2 - b - 2g.
\end{align*}
See, for example, \cite[Equation (2)]{HLS}.
In our case, $b$ is the number of boundary components, hence $b = 1$. Genus is $g = 0$ as we are working with disks. So 
\begin{align*}
\dim \M(u) = \mu_2(u) + 1.
\end{align*}

In the case of Lagrangian tangles, we can track the rotation of $T^2 L$ along the boundary of a disk explicitly. We can assume for simplicity of calculations that at each self-intersection point in $\p_+ L$, $T^2 L^h$ (the normal direction for the higher leaf) is spanned by $\p_{y_2}$ and $T^2 L^l$ is spanned by $\p_{y_2}+\p_{x_2}$. This is equivalent to assuming we have a parametrization like \ref{eqn_lagrangian movie parametrization} in Section \ref{sec_lag movies} with a standard collar near $\p_+ L$ (see Remark \ref{rem_standard collar}) and as we increase $y_2 = t$, the top leaf is stationary and the bottom leaf moves up to meet the top leaf. In general, $T^2L^l$ is spanned by some vector in the first quadrant, but that does not change our calculations as we close up the path using canonical short paths. Similarly, $T^2 L^h$ may be spanned by $\p_{y_2} + \p_{x_2}$ or any vector in the second quadrant but this does not change the calculated number in the end. Further note that,
along each of the continuous arcs of $\p u$, $T^2 L$ is never parallel to $x_2$-axis as we assume that $L$ intersects $\R^3_t$ transversely for $b-\epsilon < t \leq b$, for small $\epsilon > 0$ (see Definition \ref{defn_lag tangle}).

These assumptions give us that we can calculate $\mu_2$ by referring to the following tables: 

\noindent If the boundary of $u$, $\im(\partial u)\subset \partial_+ L$:
\begin{center}
\begin{tabular}[center]{|c| c |c|}
\hline
For every path running from   & to  & add\\
\hline
negative corner& negative corner& $-1$\\
negative corner& positive corner&$ -\frac{1}{4}$\\
positive corner& positive corner& $0$\\
positive corner& negative corner& $-\frac{3}{4}$\\
\hline
\end{tabular}
\end{center}
For example, if we go from a positive corner to a positive corner in $\partial_+ L$, we go from a higher leaf to a lower leaf adding a rotation of $\frac{1}{4}$. Then we add the rotation from the canonical short path at the terminal corner to add an additional $-\frac{1}{4}$. This leaves us with a total of $0$. The rest of the calculations are similar.

For an intersection point in $\p_- L$, all the computations are mirrored. We may assume that $T^2 L^h$ is spanned by $\p_{x_2} + \p_{y_2}$ and $T^2 L^l$ by $\p_{y_2}$. This is same as assuming that the lower leaf remains stationary with respect to $y_2$ near the intersection point and the higher leaf moves away by moving up in $x_2$. Similarly to the $\p_+ L$ case, $T^2 L$ is never parallel to the $x_2$-axis as $L$ intersects $\R^3_t$ transversely for $a \leq t < a+ \epsilon$. So, 
if $\im(\partial u) \subset \partial_- L$:
\begin{center}
\begin{tabular}{|c| c |c|}
\hline
For every path running from   & to  & add\\
\hline
negative corner& negative corner& $0$\\
negative corner& positive corner&$ -\frac{3}{4}$\\
positive corner& positive corner& $-1$\\
positive corner& negative corner& $-\frac{1}{4}$\\
\hline
\end{tabular}
\end{center}

It is clear from the above tables that for any holomorphic disk with corners, $u$, with $\partial u \subset \partial_+ L$, a negative corner adds at least $-1$ to the virtual dimension. Whereas, if $u$ had all positive corners, the virtual dimension $\dim(\M([u]; q_1, \dots, q_m)) = 1$ as $\mu_2 = 0$. Similarly, if $\partial u \subset \partial_- L$, then any positive corner decreases the virtual dimension from $1$ by at least $1$, and if $u$ has all negative corners, then $\dim(\M([u]; q_1, \dots, q_m)) = 1$. This gives us the following lemma:
\begin{lem}\label{lem_dim of horizontal disk}
Suppose $u$ is a horizontal holomorphic disk with corners, that has boundary on a Lagrangian tangle $L$. 
\begin{itemize}
\item If $\partial u \subset \partial_+ L$, then $\dim \M(u) = 1$ if and only if $u$ has all positive corners.
\item If $\partial u \subset \partial_- L$, then $\dim \M(u)= 1$ if and only if $u$ has all negative corners.
\end{itemize}
\end{lem}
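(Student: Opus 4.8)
The plan is a purely combinatorial bookkeeping argument built on the two sign tables above and the identity $\dim \M(u) = \mu_2(u) + 1$ already recorded for horizontal disks. Since $u$ is a horizontal holomorphic disk with corners, its boundary $\partial u$ traverses $\p \D$ once, meeting the marked points $z_1, \dots, z_m$ in cyclic order; I would write $q_1, \dots, q_m$ for the corresponding corners and record each sign $\sigma_i = \sign_{q_i}(u) \in \{+1,-1\}$, which is well-defined because an immersed disk always passes through a given double point in the same direction. This divides $\p \D$ into $m$ arcs $q_1 \to q_2,\, q_2 \to q_3,\, \dots,\, q_m \to q_1$, and $\mu_2(u)$ is the sum of the table contributions of these arcs. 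The one structural fact to exploit is that each corner is the initial endpoint of exactly one arc and the terminal endpoint of exactly one arc.

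For the case $\im(\partial u) \subset \partial_+ L$, let $a,b,c,d$ be the numbers of arcs of type $(+\to+)$, $(+\to-)$, $(-\to+)$, $(-\to-)$ respectively, and let $n$ be the number of negative corners. Counting arcs by the sign of their terminal endpoint gives $b + d = n$, and counting by the sign of their initial endpoint gives $c + d = n$; hence $b = c$. Substituting into the first table,
\begin{align*}
\mu_2(u) = 0\cdot a - \tfrac34 b - \tfrac14 c - 1\cdot d = -(b+d) = -n,
\end{align*}
so $\dim \M(u) = 1 - n$. This equals $1$ exactly when $n = 0$, i.e. when $u$ has all positive corners; the corner-less case $m = 0$ gives $\mu_2 = 0$, $\dim \M(u) = 1$, consistent with the (vacuously satisfied) condition.

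The case $\im(\partial u) \subset \partial_- L$ is the mirror image: with the same notation and the second table one again gets $b = c$, and
\begin{align*}
\mu_2(u) = -1\cdot a - \tfrac14 b - \tfrac34 c - 0\cdot d = -(a+b) = -p,
\end{align*}
where $p$ is the number of positive corners, so $\dim \M(u) = 1 - p$, which equals $1$ iff $u$ has all negative corners. I do not expect a genuine obstacle: the only point that needs care is the ``conservation'' identity $b = c$ for the closed boundary loop, together with the observation that the immersed nature of $u$ (so that several marked points may map to the same $q \in \Delta(L)$) is harmless, since each marked point carries its own well-defined sign and the table contributions depend only on the signs at the two endpoints of an arc.
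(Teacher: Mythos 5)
Your proof is correct and takes essentially the same approach as the paper: compute $\mu_2$ from the arc tables and use $\dim\M(u) = \mu_2 + 1$. In fact your bookkeeping is sharper than the paper's: the conservation identity $b = c$ (equal numbers of $(+\to-)$ and $(-\to+)$ arcs around the closed boundary loop) pins down $\mu_2$ exactly as $-n$ (resp.\ $-p$), whereas the paper only asserts informally that ``a negative corner adds at least $-1$ to the virtual dimension,'' a statement that is somewhat awkward to make precise since each arc is shared between two corners. Your version fills in exactly the counting step that makes that assertion rigorous.
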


We also need to keep track of the {\bf total Maslov index}, namely, the Maslov index of the bundle $(u^* T\R^4, u^* L)$. For horizontal disks, keeping in mind the above discussed splitting, we refer to the Maslov index of the tangent bundle $(\R^2, T^1 L)$ as $\mu_1$. This records the number of half rotations of the tangent of $\p u$. Note that, if we have $m$ corner points, this number is always $2 - m$. The total Maslov index, denoted by $\mu(u)$, is obtained by taking  Maslov$(\p u) + m$, where Maslov$(\p u) = \mu_1 + \mu_2$ in our notation (see \cite[Section 2.1]{RET}). Notice that the two contributions of $m$ cancel each other out and we get that
\begin{align*}
\mu(u) = \mu_2(u) + 2.
\end{align*}
For a sanity check, let us compute the expected dimension of $\M(u)$ using $\mu(u)$. From \cite[Section 2.1]{RET},
\begin{align*}
\dim \M(u) = (n-3) + \mu(u) = (2-3) + \mu_2(u) + 2 = \mu_2(u) + 1,
\end{align*}
which matches our earlier calculations using the normal bundle.

\subsection{Automatic Transversality}\label{sec_automatic transversality}
In this section we extend the arguments in \cite{HLS} to the case of holomorphic disks with corners, to get transversality. For this section, we do not really use the Lagrangian tangle structure of $L$. In fact, all results in this section hold if $L$ is an immersed Lagrangian with only transverse double point singularities. 

First, we formalize the idea that nearby holomorphic disks are holomorphic sections of the normal bundle. This is similar to \cite[Section 1]{HLS}, with changes to account for the presence of corners in the boundary in our case. Second, in Section \ref{sec_cx str on normal bundle}, we discuss how the normal bundle we defined has a complex structure. Then we state the main theorem of this section, Theorem \ref{thm_automatic transversality}, which is automatic transverality in the presence of corners. Then, in Section \ref{sec_smoothing the bundle}, we discuss how we can obtain a smooth bundle pair relevant to the problem by adjusting the bundle over the corners. Finally, We prove Theorem \ref{thm_automatic transversality} in Section \ref{sec_automatic transversality proof}.

An unparametrized regular holomorphic disk with corners in $\R^4$ that has boundary on $L$ is a real immersed surface of genus zero and one boundary component, $\Sigma \subset \R^4$ of class $C^0$ that is $C^2$ on $\wt \Sigma \defeq \Sigma \setminus \{q_1, \dots, q_m\},$ with $\partial \Sigma \subset L$, and $q_i \in \partial \Sigma \cap \Delta(L)$, such that the tangent space at each point $z \in \wt \Sigma$ is $J$-invariant. To simplify notation, we assume $\Sigma$ is embedded. Let us denote by $\p \wt \Sigma \defeq \p \Sigma \cap \wt \Sigma$. Note that Definition \ref{defn_hol disk} gives us a parametrized holomorphic disk with corners.

 If $\Sigma$ is such a disk, then $J$ induces on $T\wt \Sigma$ an integrable almost complex structure $j$, that is, a Riemann surface structure on $\wt \Sigma$ such that the identity map from $\wt \Sigma$ to $\R^4$ defines a parametrization of the holomorphic curve with Lagrangian boundary and corners, that is, it satisfies $df \circ j = J(f) \circ df$ on points of $\wt \Sigma$ and  $\partial \Sigma \subset L$.

Let us now fix such a curve $\Sigma_0$ with corners $q_1, \dots, q_m \in \Delta(L)$ and denote $$\wt\Sigma_0 \defeq \Sigma_0 \setminus \{q_1, \dots, q_m\}.$$ By the tubular neighbourhood theorem, we can identify $\R^4$ near $\wt \Sigma_0$ (as a differential manifold) with the normal bundle $N = T_{\wt \Sigma_0} \R^4 / T\wt\Sigma_0$. Note that here we are defining $T\wt\Sigma_0$ on the boundary points by taking the vector space spanned by the half-space of tangent vectors to $\Sigma_0$. 
Then, $N$ is a complex line bundle over $\wt \Sigma_0$. Here the tubular neighbourhood of $\Sigma_0$ is not really an open neighbourhood of $\wt \Sigma_0$, as $\Sigma_0$ has boundary. For points of the boundary the tubular neighbourhood does not include a ball around it, instead only half-balls are included.

We get a totally real subbundle of $N|_{\partial \wt\Sigma}$ given by $$F \defeq T_{\partial \wt\Sigma_0} L / T(\partial \wt \Sigma_0) = TL \cap N_{\partial \wt \Sigma_0}.$$ Here intersection means the image of $TL$ under the projection $TV \to N$ coming from the identification $N =TV / T\wt\Sigma$. This is a real line bundle along $\partial \wt \Sigma_0$ that cannot be extended to $\partial \Sigma_0$ as the boundary jumps from one leaf of the Lagrangian to another at the corners.

Every (real) surface that is $C^1$-close to $\Sigma_0$ and belongs in $\M(\Sigma_0)$ can be seen as the graph of a $C^1$-small section $\phi \in \Gamma(N)$ that has zeroes at $q_1, \dots, q_m$ and $\phi|_{\partial \Sigma_0} \in \Gamma(F)$. In fact, there exists $a_1, \dots, a_j > 0$ such that near the corner  $q_j$, 
 \begin{align*}
 \phi(z) = \exp_{q_j} \xi(z)
\end{align*}  
where $\xi(z) \in L^p_{k, a_j}(T_{q_j} \R^4)$. Here $\xi \in L^p_{k, a_j}(T_{q_j} \R^4)$ if $\xi$ converges in the norm
\begin{align*}
||\xi||_{k,p, a} \defeq ||e^a \xi||_{k,p}
\end{align*}
where $e^a(z) = e^{a\tau(z)}$ for $z \mapsto (t(z,)\tau(z) )$ identifying a deleted neighbourhood of $q_j$ with $[0,1] \times [0, \infty)$, $\lim_{\tau \to \infty} (t(z), \tau(z)) = q_j$ uniformly in $t$ (See \cite{floer_morselag}). Let $a = (a_1, \dots, a_m)$ and $W^{k,p}_a(N,F)$ denote $W^{k,p}$-sections of $(N,F)$ with the above convergence property at each $q_j, j=1, \dots, m$. Assume $k >1$ and $p > 2$ guaranteeing $W^{k,p} \subset C^0$.

In particular, $\Sigma_0$ is identified with the zero section. From now on we restrict our attention to the tubular neighbourhood of $\Sigma_0$ and therefore, assume $V = N$.

Fix $z_1, \dots, z_m$ ordered anti-clockwise on $\partial \D$. 
Let us pick a parametrization of $\Sigma$,
\begin{align*}
u: \D \to \Sigma
\end{align*}
with $u(z_i) = q_i$. Denote $\wt \D \defeq \D \setminus \{z_1, \dots, z_m\}$. Then we get a bundle $u^*N \to \wt \D$ by pulling back the normal bundle $N \to \Sigma$ via $u$. 
The total space of $u^* N$ gets a $C^\infty$ almost-complex structure by pulling back the complex structure on $TN$  from $\R^n$, via $u$.

\subsubsection{Complex Structure on the Normal Bundle}\label{sec_cx str on normal bundle}
In this section we discuss the complex structure on the bundle $u^*N \to \wt \D$ for which $F$ is totally real.
Let us fix a connection $\nabla$ on $TN$ compatible with $J$. We may take the standard connection on $\R^4 = \C^2$. Using the decomposition $TN = T\Sigma_0 \oplus N$ into horizontal and vertical subspaces, we define
\begin{align*}
J_0(n) = \begin{pmatrix}
\jj(x) & 0 \\ 0 & \ii
\end{pmatrix}
\end{align*}
where $x \in \Sigma_0$ is the projection of $n$, $\jj$ is the (integrable) complex structure on $T\Sigma_0$ and $\ii$ is the fiberwise complex structure on $N$. On $\Sigma_0$ we have $J = J_0$, and near $\Sigma_0$, we can write $J(n) = \Phi(n)^{-1} J_0(n) \Phi(n)$ where $\Phi(n)$ is an automorphism of $TN$ equal to the identity on $\Sigma_0$. 
This complex structure can be pulled back to the trivial bundle $\C \times \wt \D$ by taking $J(v) = J(u_* (v))$ and $\jj(z) = \jj(u(z))$ for all $v \in \C \times \wt\D$ and $z \in \wt \D$. We again denote this by $(N,F) \to \wt \D$.

As shown in \cite[Section 1]{HLS}, there is an operator $\delbar_u$ going from an open subset $U \subset W^{k,p}_a(N,F)$ containing the zero section to $\Omega^{0,1}(N,F)$, and is given by
\begin{align}\label{defn_delbar}
\delbar_u (\phi) = P(\phi) \circ j - \ii \circ P(\phi)
\end{align}
where $P: T\Sigma_0 \to N$. Zeroes of this section are the nearby holomorphic curves in the moduli space $\M(u)$. By nearby we mean any elements with image in the tubular neighbourhood, $N$.
Let us denote the linearization of $\delbar_u$ as $L_u$. We consider it as an operator
\begin{align*}
L_u : W^{1,p}_a(N,F) \to L^p(\Omega^{0,1}(N,F)).
\end{align*}
This linearization of the equation for holomorphic curves is the usual linearization, but now it acts only on sections that have boundary values lying in $F$ and appropriate zeroes at $z_i$'s. It is elliptic, thanks to the Lagrangian boundary conditions, with index given by 
\begin{align*}
\ind(L_u) = \mu_2 + 2 - b - 2g = \mu_2 + 1,
\end{align*}
where $\mu_2$ is the Maslov index of the bundle pair $(N, F)$, that is, the normal Maslov index as in the previous section. Here, $b = 1$ is the number of boundary components and $g = 0$ is the genus of $\Sigma_0$.

\begin{thm}\label{thm_automatic transversality}
If $\mu_2(u) \geq -1$, then $L_u$ is surjective. Thus, the space of holomorphic disks with corners, $(\Sigma, \p \Sigma) \subset (\R^4, L)$, near $(\Sigma_0, \p \Sigma_0)$ and with the same corners as $\Sigma_0$ is a manifold of dimension $\mu_2 + 1$.
\end{thm}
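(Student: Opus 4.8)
The plan is to adapt the automatic transversality argument of Hofer–Lizan–Sikorav \cite{HLS} for immersed $J$-holomorphic curves in dimension $4$ to the setting with boundary corners, where the relevant Cauchy–Riemann operator $L_u$ acts on weighted Sobolev spaces $W^{1,p}_a(N,F)$ with prescribed zeros at the corner marked points $z_1,\dots,z_m$. Since $L_u$ is elliptic Fredholm of index $\mu_2+1$, surjectivity is equivalent to the cokernel — identified with the kernel of the formal adjoint $L_u^*$ acting on $(0,1)$-forms with boundary values in the annihilator of $F$ — being trivial. The key structural fact we exploit is that $N$ is a \emph{complex line bundle} (the ambient dimension is $4$ and $u$ is an immersion), so a nonzero element of $\ker L_u^*$ is a section of a line bundle, and its zeros are isolated and count with positive local multiplicity; the same similarity-principle argument shows that the boundary behavior at the corners contributes nonnegatively. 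This is precisely the mechanism that makes ``automatic'' transversality work.

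The steps, in order: (1) Recall from Section~\ref{sec_cx str on normal bundle} that $L_u: W^{1,p}_a(N,F)\to L^p(\Omega^{0,1}(N,F))$ is a perturbation of a $\delbar$-operator on a complex line bundle pair $(N,F)$ over $\wt\D$, with the weight $a=(a_1,\dots,a_m)$ forcing decay at each puncture $z_i$. (2) Apply elliptic regularity and the Carleman similarity principle (as in \cite{HLS}, or \cite{floer_morselag} for the corner/weighted version) to conclude that any $\eta\in\ker L_u^*\subset L^p$ is, away from its zeros, of the form $\eta = \sigma \cdot \zeta$ for a nonvanishing continuous section $\sigma$ and a holomorphic function $\zeta$, so $\eta$ has isolated zeros each of finite positive order, and along the boundary the totally real condition from $F$ pins down the local winding. (3) Compute the total zero count: interior zeros contribute a nonnegative even amount, boundary zeros (including forced behavior at the corners, where the weighted setup requires $\eta$ to have at least a prescribed vanishing order dictated by the canonical short paths) contribute a nonnegative amount, and the sum equals (up to normalization) the negative of the index of $L_u^*$, i.e. $-(\,-\mu_2-1\,)=\mu_2+1$ counted with the appropriate sign — but the adjoint operator has index $-(\mu_2+1)$, and for $\mu_2\geq -1$ this is $\le 0$, which is incompatible with a nonzero section having a nonnegative zero count unless $\eta\equiv 0$. (4) Conclude $\coker L_u = 0$, hence $L_u$ is surjective; then the implicit function theorem applied to $\delbar_u$ on the Banach manifold $U\subset W^{k,p}_a(N,F)$ gives that the zero set near the zero section — i.e. the space of holomorphic disks with corners near $\Sigma_0$ sharing the same corners — is a smooth manifold of dimension $\ind L_u = \mu_2+1$.

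**The main obstacle** will be step (3): carefully accounting for the corner points in the zero-counting formula. Unlike the smooth (cornerless) case treated in \cite{HLS}, here the totally real boundary bundle $F$ does not extend over the $z_i$, and the weighted Sobolev setup both imposes vanishing of test sections at the corners and modifies the Fredholm index via the ``canonical short path'' completion described in Section~\ref{sec_dimension calculations}. I would handle this by passing to the smoothed bundle pair of Section~\ref{sec_smoothing the bundle} — replacing $(N,F)$ near each corner by a genuinely smooth totally real bundle pair whose Maslov index is exactly the $\mu_2$ computed via the short-path completion — so that the cornerless similarity-principle zero count of \cite{HLS} applies verbatim to the smoothed operator, and then checking that the smoothing is an isomorphism on kernels and cokernels (this is the content of Section~\ref{sec_smoothing the bundle}). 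The remaining routine points are verifying that the weight exponents $a_j$ can be chosen generically so that $L_u$ and $L_u^*$ have no spectrum at the punctures (so the Fredholm/index statement is the expected one), and that the $C^1$-graph identification of nearby curves with small sections of $(N,F)$ respects the ``same corners'' constraint.
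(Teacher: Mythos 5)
Your proposal follows the same strategy as the paper: conjugate $L_u$ via the smoothing map $\eta$ of Section~\ref{sec_smoothing the bundle} to obtain a generalized $\delbar$-operator $L_{u,\eta}$ on the smooth bundle pair $(E,G)$ over the whole disk $\D$, observe that the indices agree because $\mu(E,G)=\mu_2(u)$, invoke \cite[Theorem 2]{HLS} to get surjectivity of $L_{u,\eta}$ once $\mu_2(u)\geq -1$ makes the index nonnegative, and transfer surjectivity back to $L_u$ via the isomorphism on kernels and cokernels furnished by multiplication by $\eta$. The similarity-principle zero count you sketch in step (3) is precisely the mechanism underlying \cite[Theorem 2]{HLS}, which the paper cites as a black box rather than rederiving, and your point that the smoothing isomorphism is what reduces the corner case to the cornerless one is exactly the content of the paper's concluding commutative diagram and Serre-duality argument.
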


\subsubsection{Smoothing the Bundle}\label{sec_smoothing the bundle}
In this section we construct another complex line bundle pair $(E,G)$ on $(\D, \jj)$ with a generalized $\delbar$-operator that we may think of as the $\delbar$-operator for a different complex structure $\jj$ on $\D$ such that we get an isomorphism between holomorphic sections of this bundle and our original normal bundle, $u^*(N, F) \to \wt \D$. We call this a ``smoothing" because we are changing the bundle in such a way that the real line bundle $u^* F$ along the boundary $\p \D$ ``becomes" continuous over all points.

The main idea here is to reparametrize the bundle locally near the corners such that the Lagrangian lines have the same limit from both sides at a marked point. Suppose the Lagrangian near a double point was given by $L_1 \cup L_2$, for $L_1 = \{ y_1 = 0, y_2 = 0\}$ and $L_2 = \{x_1 = 0, x_2 = 0\}$, and our holomorphic corner is the first quadrant in $\C \times \{0\}$, that is, $I = \{y_2 = x_2 = 0, x_1 \geq 0 , y_1 \geq 0\}$. We can apply the logarithm $\log$ to see our corner as the strip $(-\infty, 1] \times [0, \frac{\pi}{2}]$. Simultaneously, we can apply the derivative of $\log$, multiplication by $\frac{1}{z}$ on each fiber. 
Along the $y_1$-axis boundary of $I$, rotating by $\frac{1}{z}$ makes 
\begin{align*}
\lim_{\ii y_1 \to 0} \frac{1}{\ii y_1} (T_{(\ii y_1)} L_2) = \lim_{\ii y_1 \to 0} \frac{1}{\ii y_1}  \{x_1 = x_2 = y_1 = 0\} = \{x_1 = y_1 = y_2 = 0\} = \lim_{ x_1 \to 0} \frac{1}{x_1} (T_{(x_1)} L_2).
\end{align*}
 So now, at the marked point $0$, both limits of $TL_2$ match. So, we can extend the Lagrangian line bundle along the boundary to the corner point.

We want to do a similar construction at each corner of $\Sigma$.
Recall the notation $T^1 L_1$, $T^1 L_2$ denote the projections of the tangent spaces $TL_1$, $TL_2$ to the tangent bundle of $\Sigma$, and $T^2 L_1$ and $T^2 L_2$ projections to the normal bundle. At any of the marked pints, $z_j \in \p \D$, the real subbundle $u^* F$ has two limits coming from either side. Let us denote
\begin{align*}
u^* F_1^j \defeq \lim_{t \to 0-} u^* F_{z_j e^{it}}, \,\,u^* F_2^j \defeq \lim_{t \to 0+} u^*F_{z_j e^{it}}.
\end{align*}
By restricting $u$ to the boundary $\partial \D$ with the anti-clockwise orientation we get a parametrization of $\partial \Sigma_0$ that we denote by $l$. We denote the parametrization in the opposite direction by  $\bar{l}$. Note that $l$ is not differentiable at the points $z_1, \dots, z_m$. We denote by $l'(z_j)$ the one-sided derivative of the part of $l$ starting at $z_j$, that is, of $l|_{[z_j, z_je^{i\delta})}$ for some small $\delta > 0$. Similarly, $\bar{l}'(z_j)$ is the derivative of $\bar{l}|_{(z_je^{-i\delta}, z_j]}$ at $z_j$. Let 
\begin{align*}
v_1^j = \frac{l'(z_j)}{||l'(z_j)||}, \,\,v_2^j = \frac{(-l)'(z_j)}{||(-l)'(z_j)||}.
\end{align*}
Then, $v_1^j$ is a unit vector  that generates $T_{z_j}^1 L_1$ in $\R^2$ and $v_2^j$ generates $T_{z_j}^1 L_2$ (as $\R$-vector spaces.) Let 
\begin{align*}
v_1^j = e^{i\theta_1^j} \text{ and } v_2^j = e^{i\theta_2^j}. 
\end{align*}
Then, $\theta^j \defeq \theta_1^j - \theta_2^j$ is the {\bf angle swept} by $\Sigma_0$ at $q_j$. Let $ I_j$ denote the cone in $\R^2$ between $T_{p_j}^1 L_1$ and $T_p^1 L_2$, namely,
\begin{align*}
I_j \defeq \{av^j_1 + bv^j_2| a\geq 0, b \geq 0\},\,\,\,\,
\wt I_j \defeq \{av^j_1 + bv^j_2| a\geq 0, b \geq 0\} \setminus \{(0,0)\}.
\end{align*}

Pick a small $\epsilon > 0$. We will need to take $\epsilon$ small enough so that some properties are satisfied which will become clear as the section progresses. For any $p \in \R^2$, let $B_\epsilon(p)$ be the ball of radius $\epsilon$ around $p$ and $\wt B_\epsilon (p) = B_\epsilon \setminus \{ p \}$ the deleted ball.
Let $C_j \defeq (B_\epsilon (z_j) \cap \D)$ and  $\wt C_j \defeq (\wt B_\epsilon (z_j) \cap \D)$. For small enough $\epsilon > 0$, there exists injective map
\begin{align*}
\rho_j :  C_j \to  I_j
\end{align*}
such that $\rho_j(0) = 0$ and $q_j + \rho_j : C_j \to \C$ is $C^1$-close to the inclusion $\iota_j : C_j \to \C$.

On $\wt I_j $ we have a biholomorphism, 
\begin{align*}
\wt \psi_j : \wt I_j \to S_j, \quad \wt \psi_j (z)  = \log ( e^{-i \theta_j} z),
\end{align*}
to the strip $S_j \defeq [0, \theta_j] \times (-\infty, R_j]$ for some $R_j \in \R$,which depends on $\epsilon$.

On $\wt C_j$, define coordinate chart
\begin{align*}
\psi_j \defeq \wt \psi_j \circ \rho_j :  \wt C_j \to S_j.
\end{align*}

Now we apply a change of coordinates on the fibers of the normal bundle.
Let
\begin{align*}
w^j_1 = \lim_{t \to 0^-} w_{z_je^{\ii t}}, \quad w^j_2 = \lim_{t \to 0^+} w_{z_je^{\ii t}},
\end{align*}
where $w_{z_j}$ is the unit vector of $F_{z_j}$ that lies in the upper half plane $\mathbb{H} \subset \C$ with respect to the chosen trivialization. Then $w_1^j$ and $w_2^j$ are unit vectors in $F_1^j$ and $F_2^j$, respectively. Suppose
$w^j_1 = e^{i \alpha^j_1}$ and $w^j_2 = e^{i \alpha^j_2}$ for $\alpha_1^j, \alpha_2^j \in S^1$. Let $\alpha^j = \alpha_1^j - \alpha_2^j$.
Define a real-valued function 
\begin{align*}
f_j : [\theta^j_1, \theta^j_2] \to [\alpha^j_1, \alpha^j_2],\quad
x \mapsto \frac{(x - \theta_1)(\alpha^j )}{(\theta^j)} + \alpha_1.
\end{align*}
Define a continuous map, using polar coordinates on $\C = \R^2$,
\begin{align*}
\wt \eta_j : \wt I_j  \to \C,\quad
re^{i}  \mapsto r e^{ i f_j(x)}.
\end{align*}
Then, $\wt \eta_j( T^1_{z_j} L_1) \subset T^2_{q_j} L_1$, and $\wt \eta_j(T^1_{q_j} L_2) \subset T^2_{q_j} L_2$.  As we removed $0$, $\wt \eta_j$ is holomorphic. 

Define smooth ``cut-off" functions $\beta_j : \C \to [0,1]$, for $j =1,2$, such that $\beta_1 + \beta_2 \equiv 1$, and
\begin{align*}
\beta_1 = \begin{cases}
1 \text{ on } B_{\frac{\epsilon}{4}} (0) \\
0 \text{ on } B_{\epsilon}(0) \setminus B_{\frac{3\epsilon}{4}} (0)
\end{cases}, \,\,\,\,\, \beta_2 = \begin{cases}
0 \text{ on } B_{\frac{\epsilon}{4}} (0) \\
1 \text{ on } B_{\epsilon}(0) \setminus B_{\frac{3\epsilon}{4}} (0)
\end{cases}.
\end{align*} 

Define a smooth function $\eta: \wt \D \to \C$ as follows.
\begin{itemize}
\item On $\wt C_j$ define $\eta(z) = \beta_1(u^{-1}(z) - u^{-1}(z_j))\wt \eta_j(z) + \beta_2(u^{-1}(z) - u^{-1}(z_j))$,
\item on $\wt{\D} \setminus (\cup_j \wt C_j$ define $\eta(z) = 1$.
\end{itemize}
Note that $ \eta$ is a nowhere vanishing diffeomorphism on $\wt \D$. 

Take the trivial bundle $\wt E \defeq \C \times \wt \D$ on $(\wt \D, \ii)$. We get a real subbundle along $\p \wt \D$ given by $\wt G_z \defeq \eta(z) F_z$. Note that, because of how $\eta$ was defined, $G$ can be extended to a real subbundle of $E \defeq \C \times \p \D$ over $\D$. We denote this bundle by $(E,G)$. So we have
$$
\begin{tikzcd}
(\wt E,\wt G) \arrow[hookrightarrow, r] \arrow[d]& (E,G) \arrow[d]\\
\wt \D \arrow[hookrightarrow, r]& \D
\end{tikzcd}
$$
with the obvious inclusions. Note that all sections of $(\wt E, \wt G) \to \wt \D$ can be extended over the marked points to get sections of $(E, G) \to \D$ which are $W^{1,p}$-regular. Holomorphic sections of $(\wt E, \wt G) \to \wt \D$ have a unique extension.

Note that, if we choose $\epsilon$ small enough in the above construction, the rotation we are introducing  via multiplication by $\eta(z)$ is the same as the rotation added  from the addition of the canonical short path. So, the bundle pair $(E, G)$ on $\D$ that has the same Maslov number as the normal Maslov number $\mu_2$ of $u$, which is the same as $\mu(N, F)$ defined via adding canonical short paths, namely, 
\begin{align}\label{eqn_equality of Maslov indices}
\mu(E, G) = \mu_2(u) = \mu(N,F).
\end{align}

\subsubsection{Proof of Theorem \ref{thm_automatic transversality}}\label{sec_automatic transversality proof}
In this section, we conclude the proof of Theorem \ref{thm_automatic transversality} by putting together the constructions from Sections \ref{sec_cx str on normal bundle} and \ref{sec_smoothing the bundle}.

The operator $\delbar_u$ as described in Equation (\ref{defn_delbar}) gives an operator $\delbar_{u,n}$ going from an open subset $U \subset \Gamma(u^* N, u^* F)$ to $\Omega^{0,1} (E, G)$ given by
\begin{align*}
\delbar_{u,\eta} (\phi) = \delbar_u(\eta^{-1} \phi), \quad phi \in W^{k,p}(E, G).
\end{align*}
For any $W^{k,p}$ section $\phi \in W^{k,p}(ED, G)$, the section $\eta^{-1} \phi \in \Gamma(E, G)$ is defined by extending (non-uniquely) over the marked points. This non-uniqueness does not impact well-definedness as we work with $W^{k,p}$ space and the marked points are a measure zero set.

Let $L_{u,\eta}: \Gamma(E,G) \to \Omega^{0,1}(E,G)$ be the linearization of $\p_{u, \eta}$ at the zero section. Then, $L_{u, \eta}$ is a generalized $\delbar$-operator as shown in \cite[Section 2]{HLS}.
Equation \ref{eqn_equality of Maslov indices}, implies that
\begin{align*}
\ind (L_{u,\eta}) = \ind (L_u).
\end{align*}
Therefore, $\ind (L_u) \geq 0$ implies $\ind (L_{u,\eta}) \geq 0$. This implies, by \cite[Theorem 2]{HLS}, that $L_{u,\eta}$ is surjective.

We note that the following diagram commutes:
$$ 
\begin{tikzcd}
W^{1,p}_a (N,F) \arrow[r, "L_u"]\arrow[d, "\eta"] & L^p(\Omega^{0,1} (N,F))\arrow[d, "d\eta"]\\
W^{1,p}(E,G) \arrow[r, "L_{u,\eta}"] &L^p(\Omega^{0,1}(E,G))
\end{tikzcd}
$$
where $\eta$ is point-wise multiplication by $\eta(z)$ at $z \in \wt \D$, and $d\eta$ is point-wise multiplication of sections by $d \eta$. With the above definitions of the spaces of sections, $\eta|_{\ker (L_u)} : \ker(L_u) \to \ker(L_{u, \eta})$ is an isomorphism. Additionally, we have canonical isomorphisms $\ker(L_u)^* \cong \coker L_u$ and $\ker(L_{u,\eta})^* \cong \coker L_{u, \eta}$. So, if $L_{u, \eta}$ is surjective, $L_u$ is surjective. This completes the proof of Theorem \ref{thm_automatic transversality}. 

\section{Properties of Boundaries of Moduli Spaces}\label{sec_boundary points}
In this section, we describe properties of the stable disks that appear in the boundary of moduli spaces $\ol \M(A, \lambda)$ or $\ol \M(u)$, that we defined in the previous sections. 
\begin{defn}
We define and distinguish between two types of nodes. For a stable holomorphic disk $[{\bf u, z}]$, consider a node $u_\alpha(z_{\alpha\beta}) = u_\beta(z_{\beta\alpha}) = q$.
\begin{itemize}
\item \textbf{Corner node:} The node $q$ is called a {\bf corner node} if $q$ is a corner for both $u_\alpha$ and $u_\beta$. This, by Definition \ref{defn_hol disk} and discussion in proof of Lemma \ref{lem_interior breaking in pairs}, can only happen at double points of $L$, that is, if $q \in \Delta(L)$.
\item \textbf{Smooth node:} The node $q$ is called a {\bf smooth node} if $q$ is a smooth point for both $u_\alpha$ and $u_\beta$.
\end{itemize} 
\end{defn}
We want to understand what all the boundary points of a given one-dimensional moduli space $\ol \M(u)$, look like. 
\begin{defn}
A stable holomorphic disk with corners, $[{\bf u, z}]$, is said to be of \textbf{Type 1} if there exists $\alpha E \beta$ such that $u_\alpha(z_{\alpha \beta}) = u_\beta(z_{\beta\alpha})$ is a smooth nodal point. Otherwise, we say $[{\bf u, z}]$ is of \textbf{Type 2}. Note that, if $[{\bf u, z}]$ is of Type 2, then for all $\alpha E \beta$, $u_\alpha(z_{\alpha \beta}) = u_\beta(z_{\beta\alpha})$ is a corner for $u_\alpha$ and $u_\beta$.
\end{defn}
Note that any holomorphic disk with corners, $u: (D, \partial D) \to (\R^4, L)$, is of Type 2 by the above definition, as it has no nodes and hence, no smooth nodes. Holomorphic disks with corners also share many properties with type 2 broken disks. So, it a natural choice to consider holomorphic disks with corners as Type 2.

\begin{defn}\label{defn_aligned disk}
Let $L \subset \R^4_{[a,b]}$ be a Lagrangian tangle.
Consider a horizontal holomorphic disk with corners, $u: (\D, \p \D) \to (\R^2_a \sqcup \R^2_b, \p_- L \sqcup \p_+ L)$.
Let $[z,z']$ denote the part of $S^1$ going from $z$ to $z'$ in the anticlockwise direction.
The boundary $\p u$ can be divided into $m$ arcs given by 
\begin{align*}
\gamma_j = u|_{[z_j, z_{j+1}]}: &[z_j, z_{j+1}] \to L & j = 1,2, \dots, m-1;\\
\gamma_m = u|_{[z_m, z_1]}: &[z_m, z_1] \to L.&
\end{align*}
Fix an orientation of $L$. This gives an orientation on $\p L$.  Then, each $\gamma_j$ gets two orientations --- one from the complex orientation of $u$, that is, from $q_j$ to $q_{j+1}$, and another from the orientation of $\p L$. Accordingly, we assign a sign to each $\gamma_j$ -- \textbf{positive} if these two orientations match and \textbf{negative} otherwise.

We call the horizontal disk, $u$, \textbf{aligned} if the signs on all $\gamma_j's$ are equal.
\end{defn}

\begin{defn}\label{defn_big disk hol}
A horizontal  holomorphic disk, $u$, that has boundary on a Lagrangian tangle $L$ is called \textbf{big} if:
\begin{itemize}
\item $u$ has all convex corners;
\item $\dim \ol \M(u) = 1$ or equivalently $\mu_2(u) = 0$;
\item $u$ is an aligned disk as in Definition \ref{defn_aligned disk};
\item either $a(\p_+L) = a(\p_- L) = 0$ or $\int_{\D} (u^* \omega) \leq a(\p_+ L)$.
\end{itemize}
\end{defn}
Note that the definition of big holomorphic disk is analogous to the definition of big disk bound by a pair of diagrams (Definition \ref{defn_big little disk}).

 \begin{figure}[h!]
\begin{center}
  \includegraphics[width = 5in]{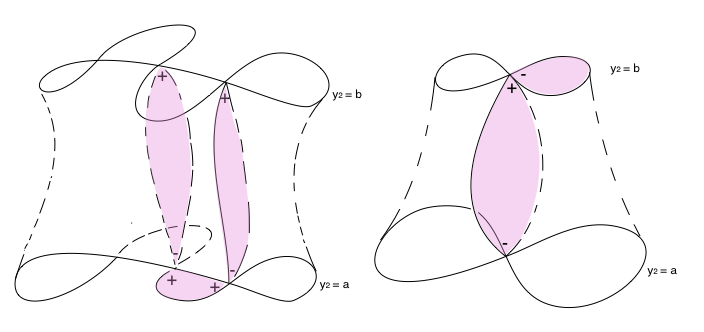}
  \end{center}
 \caption{Depiction of possible Type 2 stable holomorphic disks.}
  \label{fig:type 2 disks}
\end{figure}
Note that for a big holomorphic disk $(u,z)$, $\M(u)$ is a compact $1$-dimensional manifold.
In the following theorem, we state all the properties of boundary points $\M(u)$. The proof is spread over the following subsections - part 1 is proved in Section \ref{sec_horizontal disks are boundary points}, parts 2 and 3 are proved in Section \ref{sec_type 1 dont appear}, part 4 is proved in Section \ref{sec_constraints on signs}, and part 5 is proved in Section \ref{sec_non horizontal disk sign}.
\begin{thm}\label{thm_properties of type 2 disks}
Suppose $L \subset \R^4_{[a,b]}$ is a Lagrangian tangle and $(u, {\bf z})$ is a big holomorphic disk with corners that has boundary on $L$. Consider the moduli space $\ol \M(u)$.
\begin{enumerate}
\item Horizontal disks in $\ol\M(u)$ are boundary points. Non-horizontal holomorphic disks with corners, which do not have any nodes, are not boundary points.
\item 
The number of stable holomorphic disks of Type 2 in $\p \ol \M(u)$ must be even.
\item (\textsc{Nodal Points}) For a stable disk, $[{\bf v, z}] \in \ol \M(u)$, each nodal point $z_{\alpha \beta}$ is mapped to a double point of $L$;
\item (\textsc{Signs}) For a stable disk, $[{\bf v, z}] \in \ol \M(u)$, two nodal disks have opposite signs at the common node, that is, if $q = v_\alpha (z_{\alpha \beta}) = v_\beta(z_{\beta \alpha})$ then 
\begin{align*}
\sign_{q} (v_\alpha) = - \sign_{q} (v_{\beta}).
\end{align*} 
\item (\textsc{Existence of Horizontal Disks})  For any stable disk, $[{\bf v,z}] \in \ol \M(u)$, every node has exactly one horizontal disk attached to it. 
\end{enumerate}
 \end{thm}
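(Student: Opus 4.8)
\textbf{Proof proposal for Theorem \ref{thm_properties of type 2 disks}.}

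The plan is to prove all five parts by a unified compactness-and-bookkeeping strategy applied to the one-dimensional compact manifold-with-boundary $\ol \M(u)$. The starting point is that, by Theorem \ref{thm_cptness}, $\ol \M(u)$ is compact, and by Theorem \ref{thm_automatic transversality} (automatic transversality, applicable since $\mu_2(u) = 0 \geq -1$ for a big disk, and more generally for the holomorphic strata appearing near $\ol \M(u)$) the stratum $\M(u)$ of genuine holomorphic disks with corners is a smooth $1$-manifold. So the boundary of $\ol \M(u)$ consists precisely of the stable (broken or nodal) configurations, and each such configuration, to be a boundary point of a $1$-manifold, must have exactly one "breaking parameter." This is the engine behind parts 2, 3, 4, and 5: a stable disk in $\p\ol\M(u)$ has exactly one node (since each additional node costs one dimension via the standard gluing/dimension count for disks, $\dim = \mu_2 + 1$ being additive minus the node count), so $[{\bf v, z}]$ has a tree $T$ with exactly two vertices $\alpha E \beta$ and one nodal point. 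First I would establish this structural fact carefully: count dimensions of $\ol\M_T(u)$ for each tree $T$ using the index formula $\ind(L_u) = \mu_2 + 2 - b - 2g$ and the compatibility $\mu_2(u) = \sum \mu_2(v_\gamma) + (\text{node corrections})$, concluding that only two-vertex trees with a single node survive in the boundary of a $1$-dimensional space.

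Next I would dispatch parts 1, 2, and 3 together. For part 1, the claim that non-horizontal holomorphic disks with corners (no nodes) are not boundary points is immediate once automatic transversality gives that $\M(u)$ is a smooth $1$-manifold — a genuine holomorphic disk lies in the interior of $\M(u)$, not its closure-boundary. That horizontal disks \emph{are} boundary points is the content of the Section \ref{sec_horizontal disks are boundary points} argument (a gradient/maximum-principle degeneration showing the $y_2$-coordinate of a family of disks in $\ol\M(u)$ must limit to a constant slice); I would invoke that and the fact from Lemma \ref{lem_cpt_1} that a disk with $\im(\p u) \subset \p_\pm L$ is horizontal. Part 2 follows from part 1 and the classification of boundary points: the boundary of a compact $1$-manifold has even cardinality, the Type 1 points cancel among themselves (part 3's statement — Type 1 do not appear is actually handled in Section \ref{sec_type 1 dont appear}, wait, I need to re-read: parts 2 and 3 are proved there), so the remaining Type 2 points come in even number. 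For part 3 of \emph{this} theorem (nodal points map to double points of $L$): a smooth node would force a disk bubble attached at an interior-boundary smooth point of $L$, but by the Type 1 analysis such configurations are excluded from $\p\ol\M(u)$ for a big disk, so every surviving node is a corner node, hence lands in $\Delta(L)$ by Definition \ref{defn_hol disk}.

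For parts 4 and 5 — the sign-matching and the existence of exactly one horizontal disk at each node — I would argue locally near the node $q = v_\alpha(z_{\alpha\beta}) = v_\beta(z_{\beta\alpha}) \in \Delta(L)$. The gluing/pre-gluing picture says that a neighborhood of $[{\bf v,z}]$ in $\ol\M(u)$ is modeled on $[0,\varepsilon)$, with the open end approximating disks that smooth the corner node into a genuine corner of a single holomorphic disk near $u$. For this smoothing to be possible the two branches $\p v_\alpha$ and $\p v_\beta$ must fit together into a boundary arc that is consistent with a convex corner of $u$ at $q$; tracking the $x_2$-coordinate (using the "higher leaf / lower leaf" structure at $q$ from the discussion after Definition \ref{defn_lag tangle}) forces $\sign_q(v_\alpha) = -\sign_q(v_\beta)$, since one branch must arrive on $L^h$ and depart on $L^l$ and the other must do the reverse for the glued boundary to stay on $L$ near $q$. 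This gives part 4. For part 5, I would combine part 4 with the sign constraint on non-horizontal disks (Lemma \ref{lem_signs of vert disks}: non-horizontal disks have only positive corners on $\p_+ L$ and only negative corners on $\p_- L$): if both disks at the node were non-horizontal they would have the \emph{same} sign at $q$ (both positive, or both negative, according to which of $\p_\pm L$ their corners lie on), contradicting part 4; if both were horizontal, the glued configuration would be entirely horizontal with $\mu_2$ forcing the wrong dimension (or more precisely, a purely horizontal broken configuration does not arise as a boundary of a $1$-dimensional family containing the non-horizontal disk $u$ unless $u$ itself is horizontal, and even then the count forces exactly one horizontal piece per node — this needs the aligned/big hypothesis). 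Hence exactly one of $v_\alpha, v_\beta$ is horizontal.

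The main obstacle I expect is part 5, specifically ruling out the case where \emph{both} disks at a node are horizontal (equivalently, two horizontal disks in different slices $\R^2_a$ and $\R^2_b$ cannot be joined at a node, since a node requires equal images, and joining two horizontal disks in the \emph{same} slice is what must be excluded by the dimension/alignment bookkeeping). Making the dimension count airtight in the presence of multiple corner nodes and horizontal components — i.e., proving that each node contributes exactly one unit of "horizontal degeneration" and these cannot pile up — is the delicate combinatorial heart, and it is presumably why the paper devotes separate subsections (Sections \ref{sec_constraints on signs} and \ref{sec_non horizontal disk sign}) to parts 4 and 5. I would organize that argument around the alignment condition in Definition \ref{defn_aligned disk} and the observation that the total area $\int_\D u^*\omega$ is preserved under Gromov limits, so the areas of the horizontal and non-horizontal pieces in any boundary stratum must sum correctly, which pins down the structure.
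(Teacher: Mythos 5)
Your overall strategy (compactness from Theorem \ref{thm_cptness}, automatic transversality from Theorem \ref{thm_automatic transversality}, then classify boundary strata of a compact $1$-manifold) matches the paper's structure, and your sketches of parts 4 and 5 capture the right ideas (converging strips at the node forcing opposite signs; ruling out two horizontal components by the in-plane dimension count; ruling out two non-horizontal components by Lemma \ref{lem_signs of vert disks} plus part 4). But there are two genuine gaps.

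First, your opening "structural fact" --- that boundary points are modelled on two-vertex trees with exactly one node, because "each additional node costs one dimension" --- is unjustified and appears to be false in this setting. The paper's Theorem \ref{thm_properties of type 2 disks} part 5 is phrased as "\emph{every} node has exactly one horizontal disk attached," and the proof of Theorem \ref{thm_main application} explicitly handles stable disks with multiple components and multiple nodes. For corner nodes at immersed Lagrangian self-intersection points the usual "one node = one codimension" heuristic does not apply naively: the normal Maslov index at a corner depends on whether that corner is positive or negative (see the tables in Section \ref{sec_dimension calculations}), so a horizontal piece with some of its corners converted to nodes can still have the right dimension to contribute. Your proposal implicitly excludes the multi-node configurations that the theorem is designed to constrain, so as written the argument doesn't prove the stated result.

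Second, your treatment of parts 2 and 3 mischaracterizes the key mechanism. You write that "Type 1 points cancel among themselves" --- but the paper's Lemma \ref{lem_interior breaking in pairs} proves something stronger and different: Type 1 stable disks \emph{do not occur at all} in $\p\ol\M(u)$ when $u$ is a big (in particular aligned) disk. The proof is not a cancellation but an exclusion argument: it lifts the colliding boundary arcs to $\wt L$, uses the alignment hypothesis to show the lifted arcs meet with opposite orientation signs (Case 1 of the switching operation in Definition \ref{defn_switching operation}), deduces that after switching the arcs intersect an even number of additional times, and then gluing at those extra intersection points exhibits the would-be Type 1 stable disk as lying in a codimension-$2$ subspace of $\M(u)$ --- impossible in a $1$-manifold. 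The alignment hypothesis does genuine work here, and your proposal invokes "the aligned/big hypothesis" only vaguely in the part 5 discussion rather than at the point (Type 1 exclusion) where it is actually needed. Relatedly, "a smooth node would force a disk bubble" is not the right picture: a smooth node is a transverse boundary self-intersection degeneration, not a bubble, and ruling it out requires the orientation/switching analysis rather than exactness alone (exactness rules out disk bubbles but not smooth nodes). You should rework parts 2 and 3 around the switching operation and the orientation-sign bookkeeping on the lift to $\wt L$.
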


 \subsection{Horizontal Disks are Boundary Points}\label{sec_horizontal disks are boundary points}
In this section we prove Part 1 of Theorem \ref{thm_properties of type 2 disks}. We show that horizontal disks give us boundary points of $\ol \M(u)$. We also show that a non-horizontal disk (without nodes) cannot be a boundary point of the moduli space.

Given a Lagrangian tangle $L$, consider a small extension to get an immersed Lagrangian $L'$ as in Remark \ref{lem_collar_2}. Let $\M(u; L')$ denote the moduli space of variations of $u$ in $L'$. Note that, for a disk $u$ that has boundary on $L$, $\dim \M(u;L) = \dim \M(u;L')$.
\begin{lem}\label{lem_no intersection hor disk}
Suppose $[{\bf u}, {\bf z} = (z_1, \dots, z_m)]$ is a horizontal disk with $m$ corners and \\$\dim(\M(u; L')) = 1.$ Then, any holomorphic disk in $\M(u, \lambda; L')$, which is $C^1$-close to $[{\bf u, z}]$, intersects $u$ at only the corner points, $u(z_1), \dots, u(z_m)$.
\end{lem}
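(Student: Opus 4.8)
The plan is to use positivity of intersections for $J$-holomorphic curves. A horizontal disk $u$ lies entirely in a complex plane $\R^2_c$, so it is a $\ii$-holomorphic curve in $\R^4 = \C^2$ contained in the complex subvariety $\R^2_c = \C\times\{(0,c)\}$ (identifying the first $\C$ factor with $(x_1,y_1)$). First I would observe that any other holomorphic disk $v$ in $\M(u,\lambda;L')$ is also $\ii$-holomorphic in $\R^4$ with boundary on $L'$, and that $u$ and $v$ have boundaries on the (immersed) Lagrangian $L'$. Away from the corner points, where both $\p u$ and $\p v$ lie on $\p_\pm L$ which is embedded in $\R^2_{a}\sqcup\R^2_b$, the only way interior intersection points of $u$ and $v$ can occur is as isolated points carrying strictly positive local intersection multiplicity, by positivity of intersections of distinct $J$-holomorphic curves (e.g. McDuff, or the standard local analysis for pseudoholomorphic curves).

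The key step is a homological intersection-number computation. Since $v$ is $C^1$-close to the horizontal disk $u$, it represents the same relative class $A=[u]\in H_2(\R^4,L')$ and has the same corners $z_1,\dots,z_m$ mapped to the same double points. I would compute the algebraic intersection number $[u]\cdot[\R^2_c]$, or more precisely analyze the local contributions at the corner points $u(z_j)$: because $u$ itself lies in $\R^2_c$, one has to interpret this via a perturbation or via the intersection of $v$ with $\R^2_c$. The cleanest route is: the homological intersection number $v \cdot u$ (suitably defined using the boundary conditions on $L'$, where $L'$ is embedded near its interior and $u,v$ are close) equals the sum of the local multiplicities at all intersection points, each $\geq 1$, plus boundary contributions concentrated at the $m$ corners. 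Since $u$ is a fixed horizontal disk of Maslov index $\mu_2(u)=0$ and $v$ is a nearby genuine holomorphic disk in a $1$-dimensional moduli space, the total homological count is forced (it is $m$, coming precisely from the $m$ corner points where $v$ must pass through $u(z_j)\in\Delta(L')$ with the matching leaf structure dictated by $\lambda$). Hence every interior intersection point must in fact occur at one of the $u(z_j)$, and there are no others.

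Concretely, the steps in order: (1) recall that both $u$ and all nearby $v\in\M(u,\lambda;L')$ are $\ii$-holomorphic with boundary on $L'$, which is embedded in a neighborhood of the interiors of $\p u,\p v$ and has transverse double points exactly at the images of the corners; (2) invoke positivity of intersections to get that $u$ and $v$ (if distinct) meet at isolated interior points with local multiplicity $\geq 1$, and that each such point contributes positively; (3) show the total homological intersection pairing between the classes of $u$ and $v$ relative to $L'$ is accounted for entirely by the $m$ forced coincidences at the corner points $u(z_j)$ — using that $u$ is horizontal so $[u]$ pairs with the complex hyperplane $\R^2_c$ in a controlled way, and that $v$ shares the corners — so there is no ``room'' for extra interior intersections; (4) conclude that $v$ meets $u$ only at $u(z_1),\dots,u(z_m)$.

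The main obstacle I anticipate is making step (3) rigorous: intersection theory for holomorphic \emph{disks} (as opposed to closed curves) requires care with boundary contributions, and here the boundaries lie on an \emph{immersed} Lagrangian with the two curves coinciding along part of their boundary arcs after the corner identification. I would handle this by working on the extended Lagrangian $L'$ (where things are embedded away from $\p L'$), using that $v$ is $C^1$-close to $u$ so that near $\p u$ the curve $v$ can be written as a graph of a small section of the normal bundle $N$ of Section~\ref{sec_cx str on normal bundle} vanishing at the $z_j$; then the count of interior intersections of $u$ with $v$ equals the count of zeros of this section in the interior, which is controlled by the (automatic) transversality and index data ($\mu_2(u)=0$), forcing it to be zero. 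This reduces the problem to the automatic transversality package already established in Theorem~\ref{thm_automatic transversality} rather than to a delicate independent intersection-theory argument.
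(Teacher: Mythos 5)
Your final paragraph identifies the right reduction --- writing a nearby disk $v$ as the graph of a holomorphic section $s$ of the normal bundle $(N,F)$ that vanishes at the marked points $z_j$, so that interior intersections of $u$ and $v$ correspond to interior zeros of $s$ --- and this is indeed the route the paper takes. However, the concluding step contains a gap. You assert that $\mu_2(u)=0$ together with ``the automatic transversality package already established in Theorem~\ref{thm_automatic transversality}'' forces the interior-zero count to be zero, but that theorem only gives surjectivity of the linearized operator $L_u$; it does not bound the number of zeros of an individual holomorphic section. The missing mechanism is the bundle-smoothing from Section~\ref{sec_smoothing the bundle}: one multiplies $s$ by the nowhere-vanishing function $\eta$ to pass to a section of the pair $(E,G)$ over all of $\D$ (absorbing the forced vanishing at the corners), then doubles to a line bundle over $S^2$ with $c_1(E\cup\ol E) = \mu(E,G) = \mu_2(u) = 0$, and finally invokes positivity of intersections for holomorphic curves in a complex surface to conclude that a nontrivial holomorphic section of a degree-zero bundle cannot meet the zero section. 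Without this chain, ``$\mu_2(u)=0$ forces no interior zeros'' is an unsupported assertion.

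Your earlier steps (1)--(4), phrased as a homological intersection-number computation between the two disks directly, are also shakier than the section picture. Both disks pass through the corner points, which lie on the boundary and on the immersed Lagrangian, so the boundary contributions to any ``relative intersection number'' of $u$ and $v$ are precisely the delicate part you flagged; there is no a priori reason the total should be $m$, and that claim is not justified. The section viewpoint resolves this cleanly because the corner contributions are absorbed explicitly by the rescaling $\eta$ rather than by an ad hoc homological accounting near the boundary. So your overall strategy does converge on the paper's, but step (3) and the appeal to Theorem~\ref{thm_automatic transversality} need to be replaced by the smoothing/doubling/Chern-number argument.
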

In fact, it is not necessary that $u$ is a horizontal disk but we will only be using this Lemma for horizontal disks and the notation is easier in this case. So, we state and prove the above lemma only for horizontal disks. The proof extends easily to all holomorphic disks with corners and boundary on a Lagrangian tangle.
\begin{proof}
Just as in proof of Theorem \ref{thm_automatic transversality}, we get a normal bundle 
$$
\begin{tikzcd}
(N, F) \arrow[d]\\
\wt \D
\end{tikzcd},
$$
where $\wt \D \defeq \D \setminus \{z_1, \dots, z_m\}$. In fact, as $u$ is a horizontal disk the normal bundle is canonically identified with $\C \times \wt \D \to \wt \D$. 
We also obtain a bundle 
$$
\begin{tikzcd}
(E, G) \arrow[d]\\
\D
\end{tikzcd}
$$
that is related to the normal bundle $(N,F)$ via the smooth function $\eta$, as in proof of Theorem \ref{thm_automatic transversality}.
Recall that any holomorphic disk in $\M(u)$ near $\im (u)$ will be given by the graph of a holomorphic section
$$
\begin{tikzcd}
(N,F) \arrow[d, "\pi"]\\
\D \arrow[u, bend left, "s"]
\end{tikzcd}.
$$
Note that we are only considering sections that are zero at each $z_j$.
By point-wise multiplication by $\eta$, which was defined in proof of Theorem \ref{thm_automatic transversality}, we get a section $\eta \cdot s$ of $(E,G)$. We made choice of function $\eta$ such that $\eta \cdot s$ does not vanish at the marked points, $z_j$,  even when $s$, a holomorphic section of $(N, F)$, vanishes at the marked points $z_j$. 

The bundle pair $(E,G)$ has Maslov index equal to the normal Maslov index $\mu_2(u)$ and so,
\begin{align*}
\mu(E,G) = 0.
\end{align*}
Hence, by doubling we get a bundle $E \cup \ol E$ on sphere $S^2$ with Chern class
\begin{align*}
c_1(E \cup \ol E) = \mu(E,G) = 0.
\end{align*}
This means that a holomorphic section of $E \cup \ol E$ does not intersect the zero section, as all intersections would be positive intersections (positivity of intersections in dimension $4$, see \cite{mcduffsalamon_fat}). Therefore, the section $\eta \cdot s$ also cannot have any intersections with the zero section. As $\eta$ has no zeroes on $\wt \D$, this means that the original section $s$ cannot intersect the zero section outside of corner points.

\end{proof}

\begin{lem}\label{lem_horizontal disks are boundary points}
For a horizontal holomorphic disk with corners, $u$, that has boundary on a Lagrangian tangle, $L \subset \R^4_{[a,b]}$, and normal Maslov index $\mu_2(u) = 0$, $[u] \in \M(u)$ is a boundary point of $\M(u)$.
\end{lem}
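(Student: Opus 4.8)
The plan is to show that a horizontal disk $u$ sits at the \emph{end} of the one-dimensional manifold $\M(u)$, i.e.\ it is not an interior point, by examining how nearby honest holomorphic disks with corners in $\M(u)$ can lie. The key structural input is Lemma \ref{lem_no intersection hor disk}: any holomorphic disk $v$ in $\M(u)$ that is $C^1$-close to $u$ meets $u$ only at the corner points $u(z_1),\dots,u(z_m)$. Since $u$ is horizontal, its image lies in a single complex plane $\R^2_c$ (with $c = a$ or $c = b$), and a neighborhood of $u$ inside the Lagrangian tangle $L$ looks like a standard collar: for $t$ near $c$ the slices $L_t$ project to $\R^2$ with the crossings fixed (Remark \ref{rem_standard collar}), and $L$ locally lies on one side of $\R^2_c$ in the $y_2$-direction (since $\p_\pm L$ is a boundary component of $L$). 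So near $u$ the normal bundle $(N,F)$ splits off the $y_2$-direction, and the value of a normal section $s$ at a boundary point encodes which slice $L_t$ the nearby disk's boundary sits in.

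First I would make precise the one-sided picture. Because $\p_+ L = L\cap\R^3_b$ is a \emph{boundary} of $L$, all of $L$ near $\p_+ L$ has $y_2 \le b$; similarly near $\p_- L$ we have $y_2 \ge a$. A holomorphic disk $v$ with boundary on $L$ and close to $u$ (say $u$ horizontal in $\R^2_b$) has, by the maximum principle applied to the harmonic function $y_2 \circ v$, image with $y_2 \le b$, and $y_2\circ v$ attains its maximum $b$ exactly along the part of $\p v$ mapping into $\R^3_b$. Translating this into the normal bundle: writing nearby disks as graphs of holomorphic sections $s$ of $(N,F)$ vanishing at the $z_j$, the $\R^2_b$-transverse component of $s$ along $\p\D$ is a real function that is $\le 0$ (after fixing the sign convention so that $L$ sits at nonpositive values), vanishing precisely where $\p v \subset \R^3_b$. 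Thus the relevant component of $s|_{\p\D}$ is a \emph{one-signed} function. Since $s$ is a holomorphic section of a bundle of Maslov index $0$ (so, after the smoothing of Section \ref{sec_smoothing the bundle}, essentially a holomorphic function of a disk with totally real boundary conditions of index $0$, hence determined up to a real scalar), the solution space is effectively $1$-dimensional and parametrized by a single sign: $s$ and $-s$ are the two candidates, only one of which is geometrically admissible (has boundary values on the correct side of $\R^2_b$).

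Concretely, the argument runs: (i) the linearization $L_u$ is surjective with one-dimensional kernel by Theorem \ref{thm_automatic transversality} and $\mu_2(u)=0$, so $\M(u)$ is a $1$-manifold near $[u]$ and every nearby element is the graph of a section $s_\lambda$ coming from a one-parameter family; (ii) transporting to the smoothed bundle $(E,G)$ of Maslov index $0$ via $\eta$, the kernel is spanned by a nowhere-vanishing holomorphic section $s_0$ (nonvanishing because on the double $E\cup\ol E$ it has Chern class $0$, so no zeros, as in Lemma \ref{lem_no intersection hor disk}); (iii) hence the nearby honest disks in $\M(u)$ correspond to $t\, s_0$ for $t$ in a half-open interval $[0,\delta)$ or $(-\delta,0]$ — not a full interval — because only one sign of $t$ makes the boundary of the graph lie in $L$ rather than escaping the region $y_2\le b$ (equivalently $x_2$ staying on the correct side of the collar). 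Therefore $[u]$ has a half-neighborhood in $\M(u)$ and is a boundary point. I would also invoke Gromov compactness (Theorem \ref{thm_cptness}) to know $\ol\M(u)$ is a compact $1$-manifold with boundary, so ``boundary point'' is meaningful.

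The main obstacle I expect is step (iii): rigorously pinning down why only \emph{one} sign of the kernel section is realized by genuine disks, rather than both. This requires carefully relating the sign of the normal section's boundary component to the constraint ``$\p v \subset L$'' and ``$\im v \subset \R^4_{[a,b]}$,'' using that $u$ is aligned (Definition \ref{defn_aligned disk}) so that the boundary arcs all lie on the same side, and using the standard-collar normal form so that the $y_2$-component of the deformation is controlled by the same real function across all boundary arcs. One must rule out the possibility that a deformation moves part of $\p v$ to $y_2 > b$ while compensating elsewhere — this is exactly where the one-signedness from the maximum principle, combined with the rigidity of the index-$0$ holomorphic section (its boundary values cannot change sign, being a nonzero multiple of a single real function up to the totally real twist), closes the argument.
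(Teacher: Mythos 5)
Your proposal shares the core structure of the paper's argument: Lemma~\ref{lem_no intersection hor disk} is the key input, and the conclusion comes from showing that nearby holomorphic disks in the moduli space can only deform to one side of $\R^2_b$. However the paper organizes this differently and more cleanly. It first extends $L$ to an immersed Lagrangian $L'$ via Remark~\ref{lem_collar_2}, so that $\p_+ L$ becomes an \emph{interior} slice of $L'$; then $\ol\M(u;L')$ is a genuine $1$-manifold with $[u]$ as an interior point, and Lemma~\ref{lem_no intersection hor disk} combined with the maximum principle and a short intermediate-value argument shows that every disk in $\ol\M(u;L')$ near $[u]$ has image entirely contained in $\{y_2 \le b\}$ or entirely in $\{y_2 \ge b\}$, never straddling $\R^2_b$. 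Since $\ol\M(u;L) = \{[v] \in \ol\M(u;L') \mid \im(v)\subset\R^4_{[a,b]}\}$, the original moduli space is the $\le b$ side, exhibiting $[u]$ as a boundary point. The extension trick thus sidesteps the direct normal-bundle sign analysis you pursue — the boundary condition never has to be checked against the half-open Lagrangian $L$ itself.

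The difficulty you flag at step~(iii) is real, but your proposed resolution overshoots in one way and misfires in another. You don't need $s_0$ to be \emph{nowhere} vanishing, nor do you need alignment (which is in fact not a hypothesis of this lemma, only of the notion of big disk): it suffices that the kernel element $s_0$ of $L_u$ has a nonzero $y_2$-component somewhere on $\p\D$. This is immediate — if $y_2(s_0|_{\p\D})\equiv 0$ then the harmonic function $y_2(s_0)$ vanishes identically, forcing $s_0$ to take values in the real line $\R^3_b\cap N$, hence (being holomorphic into a line and vanishing at corners) $s_0\equiv 0$, contradicting nontriviality of the kernel. Given one point $z\in\p\D$ with $y_2(s_0(z))\ne 0$, for small $t>0$ the disk $v_t$ has a boundary point with $y_2<b$ and for small $t<0$ a boundary point with $y_2>b$ (or vice versa), and the ``no mixed disks'' claim then forces the two halves of the punctured neighborhood of $[u]$ in $\M(u;L')$ onto opposite sides of $\R^2_b$. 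This is exactly the small piece both your write-up and the paper leave implicit, and it closes the argument without the more delicate global sign coherence of $s_0$ that your step~(iii) was worrying about.
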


\begin{proof}
Suppose, we extend $L$ to $L'$ as in Remark \ref{lem_collar_2} and hence, we extend $\ol \M(u;L)$ to $\ol \M(u;L')$. We claim that Lemma \ref{lem_no intersection hor disk} implies that $[u]$ divides the moduli space $\ol\M(u;L')$ near $[u]$ into two parts --- those with image completely contained in $\R^4_{y(-\infty, b ]}$ and those with image completely in $\R^4_{[b, \infty) }$.
As $\ol \M(u;L')$ is also a $1$-dimensional manifold and $$\ol \M(u, L) = \{[v] \in \ol \M(u;L')\,| \,\im(v) \subset \R^4_{[a,b]}\},$$ this means that $[u]$ is a boundary point of $\ol \M(u;L)$.

 Assume $\im (\p u) \subset \p_+ L = L_b$. To show the claim, suppose if possible, that $[v] \in \M(u; L')$, $C^1$-close to $u$, such that there exists $p_1 \neq p_2 \in \D$ with $\pi_{y_2} v(p_1) > b$ and $\pi_{y_2} v(p_2) < b$. By the maximum principle for harmonic functions we can assume $p_1, p_2\in \p \D$.
If both $p_1$ and $p_2$ are in the same component of $\p \D \setminus \{z_1, \dots, z_m\}$, then by continuity there must exist a smooth point $p \in \p \D$ with $\pi_{y_2}(v(p)) = b$. As $L' \cap \R^3_{b } = L_b$, this would mean $v$ is intersecting $u$, which cannot happen by Lemma \ref{lem_no intersection hor disk}. If $p_1$ and $p_2$ are on different components of $\p \D \setminus \{z_1, \dots, z_m\}$, then those entire components have to lie either entirely in $\R^4_{(b, \infty)}$ or in $\R^4_{(-\infty, b)}$, respectively. In this case, by taking a path of paths $\gamma:[0,1]\times [0,1] \to \D$, such that each path $\gamma_t : [0,1] \to \D$ starts on the component of $\p \D \setminus \{z_1, \dots, z_m\}$ containing $p_1$ and ends on the one containing $p_2$, we can conclude there is a path in the interior of $\D$ on which $\pi_{y_2} \circ v$ takes constant value $b$. This would have to mean $\pi_{y_2} \circ v$ takes value $b$ at all points of $\D$ as $\pi_{y_2} \circ v$ is harmonic. This implies $\im (\p v) = \im (\p u)$ which contradicts Lemma \ref{lem_no intersection hor disk}.

\end{proof}

Now we show that a non-horizontal disk (without nodes) cannot represent a boundary point of $\M(u)$. We do so in two steps. First, note that if a holomorphic disk with corners, $(u, z)$, has boundary $u(\p \D \setminus \{z_0, \dots, z_m\} \subset L^\circ = L \setminus \p L$, then we can find a $1$-dimension worth of variations of $(u,z)$ by looking at the normal bundle we defined in Section \ref{sec_automatic transversality}. In Section \ref{sec_cx str on normal bundle}, we showed that any solution of Equation \ref{defn_delbar} gives a nearby element of $\M(u)$.  Unlike the case of a horizontal disk (as discussed in Lemma \ref{lem_horizontal disks are boundary points}), there are no additional constraints posed on the solutions. So, $u$ which corresponds to the $0$-section of the normal bundle is an interior point.

Second, we show that any disk touching the boundary of the Lagrangian tangle, $L$,  is a horizontal disk via the following lemma. This will complete the proof that non-horizontal disks are not boundary points of a moduli space $\ol \M(u)$ for big $u$.
\begin{lem}\label{lem_cpt_1}
Let $L \subset \R^4_{[a,b]}$ be a Lagrangian tangle.
Consider holomorphic disk $u: (\D, \partial \D) \to (\R^4, L)$ with corners $q_1, \dots, q_m \in \Delta(L)$. Suppose $\im(u)$ intersects $\partial L$ at a smooth point. Then it must be a horizontal disk.
\end{lem}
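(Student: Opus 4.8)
The key observation is that the only part of $\partial L$ that $\mathrm{Im}(u)$ could meet is $\partial_+ L \subset \R^2_b$ or $\partial_- L \subset \R^2_a$, since $\partial L = L \cap (\R^3_a \cup \R^3_b)$ and $\partial L$ is flat. Suppose $\mathrm{Im}(u)$ meets $\partial_+ L$ at a smooth point $p_0 = u(w_0)$ with $w_0 \in \partial \D \setminus \{z_1,\dots,z_m\}$ (the case of $\partial_- L$ is symmetric). Consider the harmonic function $h \defeq \pi_{y_2} \circ u : \D \to \R$, where $\pi_{y_2}(x_1,y_1,x_2,y_2) = y_2$. Since $\mathrm{Im}(\partial u) \subset L \subset \R^4_{[a,b]}$, we have $h \leq b$ everywhere on $\partial \D$, and hence by the maximum principle $h \leq b$ on all of $\D$. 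But $h(w_0) = b$, so $h$ attains its maximum at the boundary point $w_0$.

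The plan is to invoke the Hopf boundary point lemma (boundary maximum principle): at the boundary maximum $w_0$, either $h$ is constant, or the outward normal derivative $\partial h/\partial n(w_0)$ is strictly positive. The second alternative is incompatible with $w_0$ being a \emph{smooth} point where $\mathrm{Im}(u)$ touches $\partial_+ L$ tangentially — more precisely, because $\partial_+ L$ is contained in the single level set $\{y_2 = b\}$ and $u$ is an immersion with $u(w_0)$ a smooth point of $L$, the boundary arc of $\partial u$ through $w_0$ lies in $\{y_2 = b\}$, so $h \equiv b$ along a whole arc of $\partial \D$ near $w_0$; this forces the tangential derivative of $h$ along $\partial \D$ to vanish at $w_0$. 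Combined with the Cauchy–Riemann equations (which relate the tangential and normal derivatives of the harmonic conjugate, or more directly: $h$ and its conjugate together satisfy Cauchy–Riemann, so a boundary arc on which $h$ is locally constant forces $\partial h/\partial n$ to vanish there too, by reflecting across that arc), we conclude $\partial h/\partial n(w_0) = 0$ as well. The Hopf lemma then rules out the strict-inequality alternative, so $h$ must be constant, i.e. $h \equiv b$.

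Once $h \equiv b$, we get $\mathrm{Im}(u) \subset \R^3_b$. To upgrade this to $\mathrm{Im}(u) \subset \R^2_b = \{y_2 = b, x_2 = 0\}$, I would run the same argument on the harmonic function $\pi_{x_2} \circ u$: along the boundary arcs, $\partial u$ maps into $\partial_+ L \subset \R^2_b$, so $\pi_{x_2} \circ u \equiv 0$ on $\partial \D$, and a harmonic function vanishing on all of $\partial \D$ vanishes identically. (Alternatively, once $\mathrm{Im}(u) \subset \R^3_b$ one observes that $u$ is a holomorphic map into the complex line $\R^2_b \times \{(\text{const})\}$ sitting inside $\R^3_b \times \{b\}$ — but since $\R^3_b$ is only $3$-real-dimensional it is cleaner to just use the vanishing of $\pi_{x_2}\circ u$ on the boundary.) Hence $\mathrm{Im}(u) \subset \R^2_b$, which is exactly the statement that $u$ is a horizontal disk.

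\textbf{Main obstacle.} The delicate point is the application of the boundary maximum principle in the presence of corners and of the fact that $u$ is only $C^2$ up to the boundary away from the $z_i$. One must ensure $w_0$ is genuinely an interior point of a smooth boundary arc (which is given, since $w_0 \notin \{z_1,\dots,z_m\}$ and $p_0$ is a smooth point of $L$), and carefully justify that the boundary arc of $\partial u$ through $w_0$ actually lies in the level set $\{y_2 = b\}$ rather than merely touching it — this uses that $\partial_+ L$ is \emph{flat}, i.e. entirely contained in $\R^2_b$, so the entire connected boundary component of $\partial \D \setminus \{z_1,\dots,z_m\}$ containing $w_0$ maps into $\partial_+ L$ (its image is a connected subset of $L$ meeting $\partial L$, and near $w_0$ it is in $\partial L$; one should check the image of the whole arc stays in $\partial_+L$, or simply localize the Hopf lemma to a small disk around $w_0$). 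I expect the reflection/Cauchy–Riemann step — concluding that the normal derivative vanishes from the tangential derivative vanishing on an arc — to be the part most in need of a careful write-up, though it is standard (Schwarz reflection across the arc where $h$ is constant produces a harmonic extension whose maximum at $w_0$ is now interior, so $h$ is constant by the interior maximum principle).
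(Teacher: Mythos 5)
Your overall strategy — harmonic function $h = \pi_{y_2}\circ u$ attains its max at $w_0\in\partial\D$, then rule out strict positivity of the outward normal derivative to force $h$ constant — is the same as the paper's. But there is a genuine gap in your key step. You assert that because $u(w_0)\in\partial_+L\subset\{y_2=b\}$ and $w_0$ is a smooth boundary point, "the boundary arc of $\partial u$ through $w_0$ lies in $\{y_2=b\}$, so $h\equiv b$ along a whole arc of $\partial\D$." This does not follow. The boundary $\partial u$ maps into $L$, not into $\partial L$; $L$ is a surface with boundary and every neighborhood of $p_0\in\partial_+L$ inside $L$ contains points with $y_2<b$ (indeed $L$ is transverse to $\R^3_b$). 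A $C^1$ curve in $L$ touching $\partial_+ L$ at a single point and otherwise staying in the interior of $L$ is perfectly possible, and for such a curve $h<b$ on a deleted neighborhood of $w_0$ in $\partial\D$, so the Schwarz-reflection step has nothing to reflect across. (That $\partial u$ ends up lying in $\partial_+L$ is the \emph{conclusion} of the lemma, not something you can invoke to prove it.)

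What you do have, and what the paper actually uses, is the weaker fact of \emph{tangency}: since $\partial u$ is a $C^1$ curve in $L$ touching $\partial L$ at $q=u(w_0)$, it is tangent to $\partial_+L$ there, so $Du_{w_0}(\partial_\theta)\in T_q(\partial_+L)\subset\R^2\times\{0\}$. Because $\R^2_b$ is a complex line, $\ii\cdot Du_{w_0}(\partial_\theta)$ also lies in $\R^2\times\{0\}$, and the Cauchy--Riemann relation $D(\pi_{y_2}\circ u)(\partial_r)=D\pi_{y_2}\bigl(\ii\,Du(-\partial_\theta)\bigr)$ then gives $\partial h/\partial n(w_0)=0$ directly, contradicting the Hopf lemma. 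Your write-up should replace the arc-in-level-set claim by this tangency computation. The paper also records a point you elide: to apply Hopf one needs an interior ball $B\subset\D^\circ$ with $\bar B\cap u^{-1}(\partial_+L)=\{w_0\}$, and the case where no such ball exists (points of $u^{-1}(\R^2_b)$ accumulating at $w_0$) is handled separately by Schwarz reflection and the identity theorem (Claim \ref{clm_accumulation point}). Finally, you omit the (easy) interior-intersection case, which the paper treats first via the ordinary maximum principle.
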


\begin{proof} \footnote{This proof idea was told to me by Laurent C\^{o}t\'{e}.} Firstly, suppose $\im(u)$ intersects $\partial L$ at an interior point of $\D$. By the maximum principle for the harmonic functions, $\pi_{y_2} \circ u: \D \to \R$ where $\pi_{y_2}: \R^4 \to \R$ is given by $(x_1, y_1, x_2, y_2) \mapsto y_2$, must be a constant. So, $u$ must be a horizontal disk.

So we look at the case when $\im(u)$ intersects $\p L$ on $\im(\p u)$.
Suppose, if possible, there exists $q \in u(\partial \D) \cap \partial_+ L$. (The proof for the case where the point of intersection lies in $\partial_- L$ is analogous.) Let $q = u(p)$ for $p \in \partial \D$. As $p$ is a smooth point of $u$, there is a neighbourhood of $p$ in $\D$ where the map $u$ is 
holomorphic. 

There must exist an open ball $B \subset \D^\circ = \D \setminus \p \D$ such that $p \in \partial \bar{B}$ and $u( \bar{B}) \cap \partial_+ L = \{ q \}$. This is because, if no such ball exists, there must exist a sequence of points in $\D^\circ$ that map to $\p_+ L$ and have $p$ as a limit point. So there would be a set of points with an accumulation point mapping to the complex plane $\R^2_a$ which would imply, by Claim \ref{clm_accumulation point}, $u$ maps to $\R^2_a$ and $u$ is a horizontal disk. 

Let us denote the polar coordinates on $\R^2$ by $(r, \theta)$. Then $i(\partial_r) = \partial_\theta$. Recall $\pi_{y_2} \circ u$ is harmonic and $\pi_{y_2} \circ u (z) < b = \pi_{y_2} \circ u (p)$ for all $z \in B$.  So, by Hopf lemma, the derivative $D(\pi_{y_2} \circ u)_p(\partial_r) >0$.  By holomorphicity of $u$, $$D(\pi_{y_2} \circ u)_p(\partial_r) = D(\pi_{y_2} \circ u)_p(-i_p \partial_\theta) = (D\pi_{y_2})_q \circ Du_p \circ i_p (-\partial_\theta) = (D\pi_{y_2})_q \circ \ii_q \circ Du_p(-\partial_\theta).$$ So, 
\begin{align*}
(D\pi_{y_2})_q (\ii_q \circ Du_p(-\partial_\theta)) > 0.
\end{align*}
On the other hand, observe that $q \in \partial_+ L$ implies $\p u$ is tangent to $\partial_+ L$ at $q$. This is because the image of $\partial u$ is contained within $L$, $\partial u$ is $C^1$ near $p$ and it is touching the boundary $\partial_+ L$ at $q$. This implies $Du_p(\partial_\theta)$ is tangent to $\partial_+ L$ at $q$. So, $Du_p(\partial_\theta) \in \R^2 \times \{0\} \subset T_q \C^2$ . As $\R^2 \times \{0\}$  is closed under the action of $\ii_q$, $\ii_q \circ Du_p(\partial_\theta) \in \R^2 \times \{0\}$. So, 
\begin{align*}
D\pi_q (\ii_q \circ Du_p(\partial_\theta))  = 0,
\end{align*}
contradicting our previous observation. Thus, such a $u$ cannot exist.
\end{proof}

\begin{clm}\label{clm_accumulation point}
Suppose $u : (\D, \partial \D) \to (\R^4, L)$ is a holomorphic map, for Lagrangian tangle, $L$. Suppose $x_n \in \D^\circ$ is a sequence with limit point $p \in \partial \D$ such that $u(x_n) \in \R^2_a$ for some $a \in \R$. Then im$ (u) \subset \R^2_a$.
\end{clm}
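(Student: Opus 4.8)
The plan is to use that $\R^2_a=\{x_2=0,\ y_2=a\}$ is a complex affine line in $\C^2$, so that the only obstruction to $\im(u)\subset\R^2_a$ is the vanishing of a single holomorphic $\C$-valued function. Write $\pi_{x_2},\pi_{y_2}\colon\R^4\to\R$ for the coordinate projections as in the proof of Lemma~\ref{lem_cpt_1}, and set $g\defeq(\pi_{x_2}\circ u)+\ii\,(\pi_{y_2}\circ u)\colon\D^\circ\to\C$. Since $u$ is holomorphic for the standard $\ii$ on $\C^2$, the function $g$ is holomorphic on $\D^\circ$, and $\im(u)\subset\R^2_a$ is equivalent to $g\equiv\ii a$. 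By hypothesis $u(x_n)\in\R^2_a$, so $g(x_n)=\ii a$; in particular $\pi_{y_2}(u(x_n))=a$ for every $n$, and $\pi_{x_2}(u(x_1))=0$.

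First I would show $\pi_{y_2}\circ u\equiv a$. The function $\pi_{y_2}\circ u$ (the imaginary part of $g$) is harmonic on $\D^\circ$ and continuous on $\D$; since $\p u\subset L\subset\R^4_{[a,b]}=\{a\le y_2\le b\}$, its boundary values lie in $[a,b]$, so by the maximum principle applied to $\pm(\pi_{y_2}\circ u)$ we get $a\le\pi_{y_2}\circ u\le b$ on all of $\D$. Thus $\pi_{y_2}\circ u$ attains its minimum value $a$ at the interior points $x_n\in\D^\circ$, and the strong minimum principle for harmonic functions forces $\pi_{y_2}\circ u\equiv a$.

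It then remains to identify the constant. Since $\pi_{y_2}\circ u\equiv a$ is constant, the Cauchy--Riemann equations force $\pi_{x_2}\circ u$ (the real part of $g$) to be constant as well, so $g$ is constant on the connected set $\D^\circ$; evaluating at $x_1$ gives $g\equiv g(x_1)=\pi_{x_2}(u(x_1))+\ii a=\ii a$. Hence $\pi_{x_2}\circ u\equiv 0$ and $\pi_{y_2}\circ u\equiv a$, i.e. $\im(u)\subset\R^2_a$. The argument is essentially soft and I do not anticipate a real obstacle; the only points to watch are that $u$ need only be continuous up to $\p\D$ (enough to run the maximum principle on the harmonic function $\pi_{y_2}\circ u$), and that the hypothesis $u(x_n)\in\R^2_a$ supplies two pieces of data---the relation $y_2=a$, which pins down the harmonic function, and $x_2=0$, which then fixes the value of the resulting holomorphic constant. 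In fact a single interior point with $u(x_1)\in\R^2_a$ already suffices, and the limit point $p\in\p\D$ plays no role in the proof.
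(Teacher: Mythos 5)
Your proof is correct, and it takes a genuinely different route from the paper's. The paper's argument extends $u$ across $\partial\D$ by Schwarz reflection (using that $L$ is a real-analytic totally real surface) and then invokes the identity theorem for the holomorphic function $u_2=\pi_2\circ u$, which is constant on a set accumulating at $p$ in the enlarged domain. You instead observe that $\pi_{y_2}\circ u$ is a bounded harmonic function on $\D$ with values in $[a,b]$ attaining its minimum at an interior point, so the strong minimum principle pins it down; the Cauchy--Riemann relations then force $\pi_{x_2}\circ u$ to be constant too, and a single point fixes the value.

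Each approach has a trade-off. Yours is softer: it needs no Schwarz reflection, hence no appeal to real-analyticity of $L$, and as you note the accumulation at $p\in\partial\D$ is actually superfluous --- one interior point mapping into $\R^2_a$ is enough. However, your argument silently identifies the claim's $a$ with the lower endpoint of the interval $[a,b]$ cutting out the tangle. This is exactly the notational collision the paper sets up and it is how the claim is invoked in the proof of Lemma~\ref{lem_cpt_1} (with the plane being $\R^2_a$ or $\R^2_b$), so your proof covers every use in the paper. But if the claim were read literally with $a$ an arbitrary real number strictly between the $y_2$-bounds of $L$, the min/max principle gives nothing, while the paper's reflection-plus-identity-theorem argument (with the accumulation-point hypothesis, which is now essential) would still apply. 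It would be worth flagging this in the write-up: either observe explicitly that $a$ must be $\inf_L y_2$ or $\sup_L y_2$ for the harmonic-function argument to apply, or restate the claim so this restriction is visible.
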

\begin{proof}
A holomorphic function $u: \D \to \R^4$ gives two holomorphic maps to $\C$ by post composing by $\pi_j: \R^4 \to \R^2$ given by $(x_1, y_1, x_2, y_2) \mapsto (x_j, y_j)$ for $j = 1,2$. Let us denote $u_j = \pi_j \circ u$.
Note that $\im (u) \subset \R^2_a$ is equivalent to $u_2$ is the constant map to $(0,a)$. As $u(x_n) \in \R^2_a$, $u_2(x_n) = (0,a)$ for all $x_n$. Note that as the boundary of the disk is mapped to $L$, which is a real analytic and totally real surface (namely, $TL \cap \ii (TL) = \{0\}$), by using Schwarz reflection we can extend the map $u$ (and therefore $u_2$) to a larger (open) domain $E \supset \D$ such that $p \in E$. Now we have a holomorphic function $u_2$ on $E$ that is constant on a set with an accumulation point in $E$. So, $u_2$ must be constant and the image of $u$ must be completely contained in $\R^2_a$.
\end{proof}
Note that Lemma \ref{lem_cpt_1} only cares about whether $u$ was touching the boundary $\p L$ of $L$ at a smooth point of $u$. It is possible that it was touching $\p L$ at a double point $q$ of $L$ without $\p u$ jumping between leaves at $q$. As the concerned point was a smooth point of $u$, only one of the leaves would feature in the above argument.

 \subsection{Type 1 Stable Holomorphic Disks Do Not Appear}\label{sec_type 1 dont appear}
In this section we analyze nodes that appear in stable disks in the moduli spaces, $\M(u)$, we consider. This will prove Part 2 and Part 3 of Theorem \ref{thm_properties of type 2 disks}. As a first step we show that all nodes which appear away from double points of $L$ are smooth nodes in Lemma \ref{lem_interior nodes are smooth nodes}. The rest of the section shows smooth nodes do not appear when we consider moduli spaces, $\M(u)$, for aligned disks $u$ (Definition \ref{defn_aligned disk}).

\begin{lem}\label{lem_interior nodes are smooth nodes}
Suppose we have $[{\bf v,z}] \in \ol\M([u])$ with a node $q \in L$. Suppose $v_1$ and $v_2$ are the two disks, part of the stable disk $v$, whose common node is $q$, that is, for special points $z_1$ and $z_2$ on $\p \D$,
\begin{align*}
v_1(z_1) = v_2(z_2) = q \in L^\circ.
\end{align*}
Then 
$q$ is a smooth point for both $v_1$ and $v_2$. 
\end{lem}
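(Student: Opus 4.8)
The plan is to use analytic continuation / unique continuation for the holomorphic maps involved, in the spirit of Claim \ref{clm_accumulation point} and Lemma \ref{lem_cpt_1}. Suppose, for contradiction, that $q \in L^\circ$ is a node with $q$ a corner for at least one of the two disks meeting there — say $q$ is a corner for $v_1$ (the case where it is a corner for $v_2$ is symmetric). Being a corner means $\p v_1$ jumps between the two leaves $L^h$ and $L^l$ of $L$ near $q$. But $q \in L^\circ$ is an interior point of the Lagrangian tangle, and by Definition \ref{defn_lag tangle} the only self-intersections of $L$ lie on $\p L$. Hence $q \notin \Delta(L)$, so $L$ has a single sheet near $q$ and there is nothing for $\p v_1$ to jump between. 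First I would make this precise: by the very definition of a corner (Definition \ref{defn_hol disk}), a corner point is mapped to a double point of $L$, and since $q \in L^\circ$ is not a double point, $q$ cannot be a corner for $v_1$ (nor for $v_2$). This already gives the statement.

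To make the argument self-contained and match the level of rigor elsewhere, I would spell out why a node that is not a corner is automatically a \emph{smooth} node, i.e. why $v_1$ and $v_2$ cannot have a non-immersed or non-$C^2$ point there. Here I would invoke the regularity built into Definition \ref{defn_hol disk}: away from its finitely many corner points, each $v_\alpha$ is $C^2$-regular and an immersion, with boundary mapped into $L$. So at $z_1$, since $z_1$ is not one of the marked points of $v_1$ (a marked point would force a jump between leaves, impossible at $q \notin \Delta(L)$ as above), $v_1$ is a $C^2$ immersion near $z_1$ with $v_1(z_1)=q \in L^\circ$; likewise for $v_2$ near $z_2$. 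Thus $q$ is a smooth point of both, which is exactly the definition of a smooth node.

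The only genuinely substantive point — and the one I would treat as the main obstacle — is ruling out that $z_1$ (resp. $z_2$) is secretly a marked point of $v_1$ (resp. $v_2$) where $\p v_1$ jumps leaves. A priori a stable disk could have a marked/corner point landing on a double point of $L$, but here the hypothesis is $q \in L^\circ = L \setminus \p L$, and $\Delta(L) \subset \p L$ by the Lagrangian tangle axioms, so $q \notin \Delta(L)$; therefore no marked point of any component can map to $q$. I would note that the nodal point $z_{\alpha\beta}$ is by construction distinct from all marked points $z_i$ on its disk (the SPECIAL POINTS condition of Definition \ref{defn_stable map}), so $z_1$ and $z_2$ are not marked points in the first place — there is no jump of leaves there — and the node is forced to be smooth. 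Assembling these observations: $q \notin \Delta(L)$, the node is not a corner node, hence by the dichotomy of the two node types it is a smooth node, and each adjoining disk is $C^2$-regular and immersed at the corresponding preimage. This completes the proof.
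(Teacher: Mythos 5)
Your proposal correctly identifies the key observation — that corners of a holomorphic disk with corners are, by Definition~\ref{defn_hol disk}, mapped to double points of $L$, and since $q\in L^\circ$ and $\Delta(L)\subset\partial L$, the nodal point cannot be a corner — but it then asserts the conclusion by \emph{invoking} the regularity clause of Definition~\ref{defn_hol disk} rather than establishing it. This is precisely the step the paper treats as the substantive content of the lemma. The paper's proof establishes boundary regularity analytically: it notes that $v_1$ is holomorphic on $\D^\circ$, places $v_1$ in $W^{k,p}(\D)$, uses the trace operator to get $v_1|_{\p\D}\in W^{1,p}(\p\D)$, and then invokes the Bedford--Gaveau boundary regularity theorem for holomorphic maps with boundary on a totally real submanifold to conclude $v_1$ is smooth at $q$, precisely because $L$ is smooth (non-singular) at $q$.

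The reason this matters: the component maps of a Gromov limit are, a priori, only holomorphic away from the special points (see the \textsc{Holomorphicity} clause of Definition~\ref{defn_stable map}), and the statement ``in the complement of the corner points $u$ is $C^2$-regular'' in Definition~\ref{defn_hol disk} is not a free observation — it is a regularity property that must be \emph{proved} for limit maps, not assumed. Your argument reads that clause as an axiom about the limit, which makes the argument essentially circular: it assumes the regularity at $z_1$ in order to conclude that $q$ is a smooth point. The elliptic-regularity step (Bedford--Gaveau) is what underwrites that clause for the nodal point in question, and is the genuine content the paper supplies that your proposal omits.

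A secondary, minor slip: you argue that because the \textsc{Special Points} condition of Definition~\ref{defn_stable map} makes nodal points distinct from the global marked points $z_i$, the points $z_1,z_2$ ``are not marked points in the first place — there is no jump of leaves there.'' This conflates the global marked points of the stable disk with the corner points of the individual component disk $v_\alpha$. A nodal point of the stable disk can very well be a corner point of $v_\alpha$ in the sense of Definition~\ref{defn_hol disk} — that is exactly what the paper's notion of \emph{corner node} captures — so the \textsc{Special Points} condition alone does not rule out a jump of leaves at a nodal point. What rules it out here is the fact that $q\notin\Delta(L)$, which you do correctly observe elsewhere; the \textsc{Special Points} appeal should be dropped.
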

\begin{proof}
As $v_1$ is holomorphic on the interior of $\D$, $v$ belongs to the Sobolev space $W^{k,p} (\D)$ for any non-negative integers $k$ and $p$. Let us take $k > 2$ and $p > 1$, such that $k - \frac 1 p > 1$. Then, by using the trace operator
\begin{align*}
T_k : W^{k,p}(\D) \to \prod_{l=0}^{k-1} W^{\frac{k-l-1}{p}, p}(\p \D),
\end{align*}
we see that $(v_1)|_{\p \D} \in W^{1, p} (\p \D)$. Then, by \cite[Theorem 4.5]{bedford-gaveau}, we get that $v_1$ is smooth at $q$, as $L$ is smooth at $q$. Similarly, $v_2$ is also smooth at $q$.
\end{proof}

Recall that we let $[z,z']$ denote the part of $\p \D$ going from $z$ to $z'$ in the anticlockwise direction.
Consider the moduli space $\M(u)$ where $u$ is a horizontal disk with $\partial u \subset \p L$ and corner points $q_1, \dots, q_m \in \Delta(L)$. The boundary $\p u$ can be divided in to $m$ arcs given by 
\begin{align*}
\gamma_j = u|_{[z_j, z_{j+1}]}: &[z_j, z_{j+1}] \to L & j = 1,2, \dots, m-1;\\
\gamma_m = u|_{[z_m, z_1]}: &[z_m, z_1] \to L.&
\end{align*}
Now onward, we will write $[z_j, z_{j+1}]$ for all $j$ with the understanding that $m+1$ denotes $1$, when the number of corner points is $m$. 

Suppose $[{\bf v, z}] = \lim_{\nu \to \infty} [u_\nu, {\bf z}_\nu]$, for $[u_\nu, {\bf z}_\nu] \in \M(u)$.
A boundary node of {\bf Type H} (see \cite[Definition 3.4]{liu}) is a singularity locally isomorphic to
\begin{align*}
(0,0) \in \{z^2 - w^2 = 0\}/A
\end{align*}
where $(z,w)$ are coordinate on $\C^2$, and $A(z,w) = (\ol z, \ol w)$ is complex conjugation. We can view this as the image $u^\nu (\alpha)$ of an arc, $\alpha:[0,1] \to \D$ with $\alpha(0) \in [z_j, z_{j+1}]$ and $\alpha(1) \in [z_k, z_{k+1}]$, shrinking to a point as $\nu \to \infty$. Another way to view this node is it corresponds to a boundary arc, $\gamma_j^\nu$, ``intersecting" itself or another arc, $\gamma_k^\nu$ in the limit. The other types of possible nodes as outlined in \cite[Definition 3.4]{liu} are singularities isomorphic to:
\begin{itemize}
\item[(1)] $(0,0) \in \{zw = 0\}$ (interior node),
\item[(2)] $(0,0) \in \{z^2 + w^2 = 0\}/A$ (boundary node of type E).
\end{itemize}
Interior nodes do not appear for us because $\Pi_2(\R^4)= 0$ does not allow sphere bubbles to form. Boundary nodes of type $E$ correspond to a boundary component shrinking to a point. As we consider only one boundary component, a sphere bubble would be formed if that boundary shrank to a point. So, boundary nodes of type $E$ also do not form. Thus, all nodes of $[{\bf v,z}] $ are of Type H.

Suppose $L$ is a Lagrangian tangle. So, there exists a surface $\wt L$ with boundary and an immersion $\iota: \wt L \to \R^4$ such that $L = \iota(\wt L)$. We assumed $L$ is oriented. We fix an orientation on $\wt L$ such that $\iota: \wt L \to L$ is orientation preserving.
Any continuous path $\gamma: [0,1] \to L$ has a continuous lift, $\wt \gamma: [0,1] \to \wt L$, unless it passes through a double point $q \in \Delta(L)$ and $\gamma$ jumps from one leaf to the other at $q$.

Before we proceed, we define a switching operation on arcs on a surface with boundary.
\begin{figure}[h!]
\begin{center}
  \includegraphics[width = 5in]{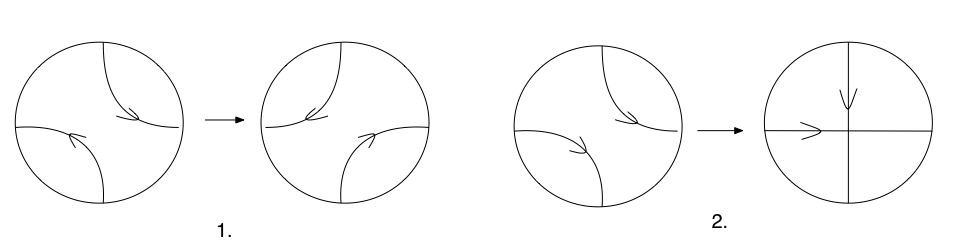}
  \end{center}
 \caption{Two cases for the switching operation defined in Definition \ref{defn_switching operation}}
 \label{fig:switching operation}
\end{figure}
\begin{defn}\label{defn_switching operation} Suppose $\wt L$ is an oriented surface with boundary. It may have any finite number of boundary components and any genus. Suppose, we have two arcs $\gamma_1$ and $\gamma_2$ on the boundary such that there exists a small disk $\D_\epsilon$ in the interior of $\wt L$ such that, up to a diffeomorphism, $(\gamma_1 \cup \gamma_2) \cap \D_\epsilon$ looks like a neighbourhood of $0 \in \R^2$ with $\gamma_1$ and $\gamma_2$ mapped on to  $\{xy = \epsilon\}$ for some small enough $\epsilon > 0$. Then, the switching operation swaps out the two hyperbola pieces $\{xy = \epsilon\}$ with the limit $\{xy = 0\}$ or with the hyperbolas $\{xy = -\epsilon\}$ depending on their orientation as in Figure \ref{fig:switching operation}. Then we match up the end point smoothly to get continuous arcs $\delta_1$ and $\delta_2$ on $\wt L$. Note that if before the switching we had arcs going from $\gamma_1(0)$ to $\gamma_1(1)$ and from $\gamma_2(0)$ to $\gamma_2(1)$, after the switching we get arcs from $\gamma_1(0)$ to $\gamma_2(1)$ and from $\gamma_2(0)$ to $\gamma_2(1)$. 

Within the small disk $\D_\epsilon$, we may have two orientation situations as shown in Figure \ref{fig:switching operation}. In the first case, the two new arcs do not intersect and in the second case, they intersect only once. 
\end{defn}

\begin{lem}\label{lem_interior breaking in pairs}
If $u$ is a big disk, Type 1 stable disks don't appear in $\ol \M(u)$.
\end{lem}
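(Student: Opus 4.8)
The plan is to argue by contradiction: suppose $[\mathbf{v},\mathbf{z}] \in \ol\M(u)$ is a Type 1 stable disk, arising as a Gromov limit $[\mathbf{v},\mathbf{z}] = \lim_{\nu\to\infty}[u_\nu,\mathbf{z}_\nu]$ with $[u_\nu,\mathbf{z}_\nu]\in \M(u)$. By Lemma \ref{lem_interior nodes are smooth nodes} together with Definition \ref{defn_hol disk}, the only interior-node types that can occur are boundary nodes of Type H (the types E and the interior/sphere node are excluded by $\Pi_2(\R^4)=0$ and by having a single boundary component, as discussed just before this lemma), and a Type 1 disk has at least one \emph{smooth} node, i.e. a Type H node at which the boundary arcs $\gamma_j^\nu$, $\gamma_k^\nu$ do not jump leaves. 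The first step is to pass from the nodal limit back to a nearby genuine holomorphic disk with corners: resolving a Type H smooth node (replacing $\{xy=0\}$ by $\{xy=\pm\epsilon\}$) produces, for $\nu$ large, a parametrized holomorphic disk $u_\nu$ whose boundary $\partial u_\nu$ is obtained from $\partial u$ (hence from $\partial\mathbf{v}$) by the \textbf{switching operation} of Definition \ref{defn_switching operation} applied at the node, performed on the lift $\wt{\partial u}$ to $\wt L$.

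The heart of the argument is to track how the alignment hypothesis on $u$ (Definition \ref{defn_aligned disk}) interacts with this switching. Since $u$ is big, hence aligned, all of its boundary arcs $\gamma_1,\dots,\gamma_m$ carry the \emph{same} sign relative to the fixed orientation of $\partial L$ — say all positive. I would then show that the two orientation situations in Definition \ref{defn_switching operation} are constrained: because both incoming arc-germs at the node agree with the orientation of $\partial L$ in the same way (this is exactly what "aligned" buys us), the local picture near the node on $\wt L$ is forced into the case where the switching would have to \emph{reverse} the orientation-compatibility of one of the resulting arcs $\delta_1,\delta_2$ — i.e. after switching, one of the new boundary arcs runs \emph{against} the orientation of $\partial L$, while the arcs in $\partial u$ all ran with it. Equivalently, a smooth Type H node in the limit of aligned disks would force the limiting configuration to have mismatched arc signs, contradicting that the limit is a degeneration of $u$'s boundary data (the relative homology class, and hence the total signed boundary behavior, is preserved under Gromov convergence). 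This is the step I expect to be the main obstacle: making precise, in terms of the lift to $\wt L$ and the local model $\{xy=\epsilon\}$, exactly which of the two switching pictures can occur, and deriving the sign contradiction cleanly rather than by hand-waving about orientations. It may be cleanest to phrase it as: the switching operation, applied at a smooth node compatible with alignment, changes the number of sign-changes among the boundary arcs by an odd amount, so an aligned disk (zero sign-changes) cannot degenerate to, or arise from, a configuration with a smooth node.

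Once the local orientation analysis is in place, the remaining steps are routine. From the sign incompatibility I conclude that no smooth Type H node can appear in $\ol\M(u)$ for $u$ aligned; since Type 1 means "has a smooth node," this shows Type 1 stable disks are absent from $\ol\M(u)$. I would close by noting that the same switching/lifting bookkeeping also records how the corner data and relative class transform — which is precisely what is needed for Parts 2–5 of Theorem \ref{thm_properties of type 2 disks} in the subsequent subsections — but for the present lemma only the sign obstruction is required.
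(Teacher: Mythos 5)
Your proposal correctly identifies the role of the switching operation on the lift to $\wt L$ and correctly senses that the alignment hypothesis is what controls the two possible local orientation configurations at a smooth node. But the mechanism you propose for extracting a contradiction is not the one that works, and I believe it cannot be made to work as stated. You claim that switching at a smooth node "changes the number of sign-changes among boundary arcs by an odd amount," so that an aligned disk cannot degenerate to such a configuration. That is not the case: the switching operation locally rearranges the arcs but each small piece of arc retains its orientation relative to $\partial L$, so the new arcs $\delta_j, \delta_k$ of the limiting stable disk are stitched from pieces that all carry the \emph{same} sign (since $\gamma_j^\nu, \gamma_k^\nu$ were aligned). The limit does not become misaligned, and there is no sign-mismatch contradiction. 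Alignment gives you something subtler: it forces the local picture at the node into Case~1 of Definition \ref{defn_switching operation}, where the switched arcs $\hat\delta_j,\hat\delta_k$ do \emph{not} intersect inside $\D_\epsilon$, so the lifts $\wt\delta_j,\wt\delta_k$ have algebraic intersection number zero. (The paper proves this by constructing an explicit path $\beta$ from $\gamma_j$ to $\gamma_k$ in $L$ built out of the $u^\nu(\alpha^\nu(0))$'s, and comparing the orientation signs of its endpoints, which alignment forces to be opposite.)

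The actual contradiction in the paper is a \emph{dimension} argument, which is entirely absent from your proposal. Since $\wt\delta_j$ and $\wt\delta_k$ have algebraic intersection number zero but must geometrically meet (they share the node), they meet at an even, and hence at least two, number of points away from their endpoints. One can glue near \emph{any} of those intersection points, giving several distinct one-parameter families in $\M(u)$ all limiting to the same stable configuration with the other intersection points as nodes; this shows the Type~1 stable disk sits in a codimension-two stratum, which is impossible since $\dim\M(u)=1$. Without this gluing/codimension step you have only a topological statement (intersection number zero), not a contradiction. You also omit the preliminary step ruling out disk bubbles on either side of the node: the paper uses relative exactness of $L$ to argue that both components of the stable disk must carry at least one corner point, which is needed before the arcs $\delta_j,\delta_k$ even exist in the form used. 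I would revise your proposal to replace the sign-contradiction closing step with the intersection-number-plus-gluing argument, and add the no-bubble argument at the start.
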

\begin{rem}
Of course, this means the number of boundary points corresponding to type 2 stable disks is even. Note that it is crucial here that $u$ is an aligned holomorphic disk, which is part of the definition of big disk. This assumption will rule out interior smooth breakings. If we did not have the aligned assumption, we can have smooth nodes in the interior of the tangle.
\end{rem}
\begin{proof}
Suppose we have a point $[{\bf v,z}] \in \ol\M([u])$ with a node $q$ on the interior of $L$. For simplicity in notation let us assume that there is a representative stable disk with two components $(v, z_1, \dots, z_m)$ and $(v', z'_1, \dots, z'_{m'})$ that are holomorphic disks with boundaries.

Note that the sets of marked (corner) points $z_1, \dots, z_m$ and $z'_1, \dots, z'_{m'}$ have to both be non-empty. 
Otherwise, we would have a disk bubble, that is, a holomorphic disk in $\Pi_2(\R^4, L^\circ)$ where $L^\circ = L \setminus \p L$ denotes the interior of $L$. We are considering a relatively exact Lagrangian $L^\circ$. This means we either have $\omega|_{\Pi_2(\R^2, L^\circ)} \equiv 0$, in which case a disk representing an element of $\Pi_2(\R^4, L^\circ)$ cannot be holomorphic as holomorphic disks have symplectic area strictly positive. The other case is when $\omega (\Pi_2(\R^4, L^\circ)) = a \Z$ and $\omega[u] \leq a$. A disk bubble would be an element $[v] \in \Pi_2(\R^4, L^\circ)$ with $0< \omega[v] < a$ which cannot happen.

As $q$ is a boundary node of type H, there is a path of elements $[u^\nu] \in \M([u]), \nu \in [0, \infty)$ limiting to the stable disk ${(v, v')}$, with domain $\Sigma$ (see Definition \ref{defn_stable map}). Let $\gamma_j^\nu$ and $\gamma_k^\nu$ be the corresponding arcs for the holomorphic disk $u^\nu$. Then $u = u^0$, $\gamma_j = \gamma_j^0$, and $\gamma_k = \gamma_k^0$.
Now, $q$ is a boundary node of type H in the interior of $L$, that is, there exist points $z \in \p \D$ and $z' \in \p \D$ such that $v(z) = v'(z') = q$ and $q \in L^\circ$.
Note that, by Lemma \ref{lem_interior nodes are smooth nodes}, $q$ is a smooth point for both $v$ and $v'$. Further, we may assume
the nodal point $q$ is a point of transverse intersection between $\p v$ and $\p v'$  as this is a generic property of the compactification. 

Let $\delta_j$ and $\delta_k$ be boundary arcs of $v$ and $v'$ that intersect, and $\wt \delta_j$ and $\wt \delta_k$ be their lifts to $\wt L$. Then
\begin{align*}
\wt \delta_j (0) = \wt\gamma_j (0), \wt \delta_j(1) = \wt\gamma_k(1), \wt \delta_k (0) = \wt\gamma_k (0), \wt \delta_k(1) = \wt\gamma_j(1).
\end{align*}
There exists $\epsilon > 0$ such that the arcs $\wt \delta_j$ and $\wt \delta_k$ are isotopes of arcs obtained by doing the switching operation inside a small disk $D_\epsilon$ on $\wt \gamma^\nu_j$ and $\wt \gamma^\nu_k$ for large enough $\nu$.
Let us denote the arcs obtained by switching $\wt\gamma_j^\nu$ and $\wt \gamma^\nu_k$ in $\D_\epsilon$ by $\hat{ \delta}_j$ and $\hat{\delta}_k$ that are respectively isotopic to $\wt \delta_j$ and $\wt \delta_k$. As the switching operation is local, the only intersections of $\hat{\delta_j}$ and $\hat{\delta_k}$ are inside the small disk $\D_\epsilon$. So, they will intersect in $0$ or $1$ points depending on whether we are in Case $1$ or Case $2$ of Figure \ref{fig:switching operation}.

There exist strong deformations $f_\nu : \D \to \Sigma$ in the sense of \cite[Definition 5.5]{liu}, such that $f_\nu^{-1}(z \sim z')$ is an arc in $\D^\nu$ for each $\nu \in [R, \infty)$, and $u_\nu \circ f_\nu^{-1}$ converges uniformly on contact sets to $(v,v')$. We can assume, by changing the parameter space, that this holds for all $\nu \in [0, \infty)$. Pick a parametrization $\alpha^\nu: [0,1] \to \D$ such that $u^\nu(\alpha^\nu(0)) \in \im \gamma_j$ and $u^\nu(\alpha^\nu(1)) \in \im \gamma_k$ for some $j, k \in \{1,\dots, m\}$. Note that convergence of $u^\nu \to (v,v')$ as $\nu \to \infty$ implies
\begin{align*}
\lim_{\nu \to \infty} u^\nu(\alpha^\nu(0)) = \lim_{\nu \to \infty} u^\nu(\alpha^\nu(1)).
\end{align*}
Define a path $\beta: [0,1] \to L$ be defined by
\begin{align*}
\beta(\nu) = \begin{cases}u^{\tan(\nu)} (\alpha^{\tan{\nu}(0))} &\nu \in [0,\frac12)\\
\lim_{\nu \to \infty} u^\nu(\alpha^\nu(0)) &\nu = \frac12\\
u^{-\tan(\nu)} (\alpha^{-\tan{\nu}(0))}  &\nu \in (\frac12, 0].
\end{cases}
\end{align*}
Then $\beta(0) \in \im(\gamma^0_j)$, and $\beta(1) \in \im(\gamma^0_k)$.

For any two paths $\gamma$, $\beta$ on an oriented surface $S$ that intersect, intersection points can be assigned an orientation sign (which is used to compute the algebraic intersection number) as follows: an intersection point $q = \gamma(t_1) = \beta(t_2)$ is a positive intersection if $(\gamma'(t_1), \beta'(t_2))$ forms a positive basis for $T_q S$.
As $u$ is an aligned disk, the boundary arcs $\gamma_j$ and $\gamma_k$ have the same sign according to Definition \ref{defn_aligned disk}. This means that the intersection point $\beta(0)$ of $\beta$ and $\gamma_j$, and the intersection point $\beta(1)$ of $\beta$ and $\gamma_k$, have opposite orientation signs. 
Let $\wt \beta$, $\wt \gamma_j$, and $\wt \gamma_k$ be the lifts of $\beta$, $\gamma_j$, and $\gamma_k$, respectively, to $\wt L$.
Then, the intersection point $\wt \beta(0)$ of $\wt \beta$ and $\wt \gamma_j$, and the intersection point $\wt \beta(1)$ of $\wt\beta$ and $\wt\gamma_k$, have opposite orientation signs, and so do the intersection points $\wt \beta \cap \wt \gamma^\nu_j$ and $\wt \beta \cap \wt \gamma^\nu_j$. So, $\wt \gamma^\nu_j$ and $\wt \gamma^\nu_k$ in $\D_\epsilon$ are in the configuration of Case $1$ in Figure \ref{fig:switching operation}. Therefore, $\hat{ \delta}_j$ and $\hat{\delta}_k$ do not intersect, and so the isotopes $\wt \delta_j$ and $\wt \delta_k$ intersect in zero points counted with orientation signs.

This implies that if $\delta_j$ and $\delta_k$ intersect, they do so at even number of points away from their end points. Say at $r_1, \dots, r_{2k}$.
Note that ``we can glue" near each intersection point $r_j$ to get a sequence of holomorphic disks in $\M(u)$ converging to the stable disk that has $v,v'$ as part of it. That is, we can do standard gluing technique like , for example, \cite{liu}, to get a sequence $[{\bf u}_n^j] \in \M([u])$ that Gromov converges as $n \to \infty$ to a stable disk ${\bf v}^j$ which has the other points $r_l, l \neq j$ as nodes. This means that the stable disk with smooth node $q$ lies in a codimension 2 subspace of the moduli space $\M(u)$. As $\M(u)$ is a $1$-dimensional manifold, this is not possible.
\end{proof}

\subsection{Constraints on Sign at Corner Nodes}\label{sec_constraints on signs}

In this section we prove Part 4 of Theorem \ref{thm_properties of type 2 disks} as the following lemma. As we already showed that the only relevant nodes are corner nodes, the following lemma suffices.

\begin{lem}[\textsc{Adjacent disks have opposite signs}]\label{lem_opposite sign}
Suppose a stable disk \begin{align*}
(\{u_\alpha\}_{\alpha \in T}, \{z_{\alpha \beta}\}_{\alpha E \beta}, \{z_i, \alpha_i\}_{1\leq i \leq m})
\end{align*}
 is the Gromov limit of a sequence, $(u_n, \{z^n_i\}_{1\leq i \leq m})_{n \in \mathbb{N}}$ of holomorphic disks with corners that has boundary on Lagrangian tangle $L$. Suppose $z_{\alpha \beta}$ is a nodal point mapped to a double point of $L$, that is,  
 \begin{align*}
 u_\alpha(z_{\alpha \beta}) = u_\beta (z_{\beta \alpha}) = q \in \Delta(L) \subset \partial L,
 \end{align*}
 and it is a corner node. 
 Then, 
 \begin{align*}
 \sign_{q}(u_\alpha) = - \sign_{q}(u_\beta).
 \end{align*}
\end{lem}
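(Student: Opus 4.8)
The plan is to analyze what happens near the corner node $q \in \Delta(L)$ in the Gromov limit. Near $q$ the Lagrangian $L$ has two leaves, the higher leaf $L^h$ and the lower leaf $L^l$ (with respect to $x_2$, for fixed $y_2$). By definition, $\sign_q(u_\alpha)$ records whether $\partial u_\alpha$ traverses $L^h$ before $L^l$ or the other way around as it passes through the corner $q$; the same for $u_\beta$. The key point is that the degeneration producing the node $z_{\alpha\beta}$ comes from pinching an arc $\alpha^\nu:[0,1]\to\D$ in the disks $u^\nu=u_n$, where $\alpha^\nu(0)$ and $\alpha^\nu(1)$ lie on the boundary arcs on either side of the forming corner; since $q\in\Delta(L)$ is a transverse double point (a type H boundary node as classified via \cite{liu}), the two ends of the pinching arc limit onto \emph{different} sheets of $L$ near $q$ — one onto the higher leaf, one onto the lower leaf. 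I would make this precise by passing to the normalization $\iota:\wt L \to \R^4$: the boundary of the stable disk lifts to $\wt L$ away from corners, and near $q$ the points $q_+ \neq q_-$ with $\iota(q_\pm)=q$ correspond to the two leaves.

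The main steps, in order: (1) Set up local coordinates near $q$ in which $L^h$ and $L^l$ are standard (as in Section \ref{sec_dimension calculations}, e.g.\ using the model where $T^2L^h$ and $T^2L^l$ are spanned by fixed vectors), and describe the two model holomorphic corners that can occur at a double point. (2) Use the convergence $u^\nu \to ({\bf u},{\bf z})$ and the fact that the node is of type H to show that the boundary $\partial u_\alpha$ near $z_{\alpha\beta}$ lies on one leaf approaching $q$ and the boundary $\partial u_\beta$ near $z_{\beta\alpha}$ lies on the \emph{other} leaf approaching $q$ — more precisely, the incoming boundary branch of $u_\alpha$ at $z_{\alpha\beta}$ and the outgoing boundary branch of $u_\beta$ at $z_{\beta\alpha}$ must glue up to the boundary branch of $u^\nu$ (which lies on a single leaf), and similarly the other two branches, so that $u_\alpha$ arrives on $L^h$ exactly when $u_\beta$ departs on $L^h$. (3) Translate the leaf-ordering information of step (2) into the sign convention: if $\partial u_\alpha$ goes higher-leaf-then-lower-leaf at $q$, then $\partial u_\beta$, to be compatible with the limit of $\partial u^\nu$ and the orientation, must go lower-leaf-then-higher-leaf, i.e.\ $\sign_q(u_\alpha) = -\sign_q(u_\beta)$. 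Concretely this is a statement about the four one-sided limits $\lim_{\partial u_\alpha \to z_{\alpha\beta}^{\pm}} x_2$ and $\lim_{\partial u_\beta \to z_{\beta\alpha}^{\pm}} x_2$ and how they fit together in the Gromov limit.

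I expect the main obstacle to be step (2): pinning down exactly which boundary branches glue to which in the limit, and ruling out the "wrong" gluing in which both disks would traverse, say, the higher leaf first. This requires using the orientation of $\partial\D$ (anticlockwise / complex orientation) together with the structure of a type H node — the arc $f_\nu^{-1}(z\sim z')$ has endpoints on $\partial\D$ whose cyclic ordering relative to the marked points forces the matching. A clean way to handle this is to observe that the node $z_{\alpha\beta}$, once glued, reconstructs a single smooth-across-the-leaf-jump corner of $u^\nu$: the corner of $u^\nu$ near $q$ has a well-defined sign (it is an honest corner of a holomorphic disk with corners, passing from one leaf to the other in a fixed direction), and in the limit this single corner "splits" into the two nodal corners of $u_\alpha$ and $u_\beta$; the leaf that $u^\nu$ leaves is the leaf $u_\beta$ leaves, the leaf $u^\nu$ enters is the leaf $u_\alpha$ enters, while the "middle" transition (from the entering leaf of $u_\alpha$ back to the leaving leaf of $u_\beta$) is precisely the node. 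Unwinding this against the sign definition gives the two corners opposite signs. Once step (2) is in hand, step (3) is a short bookkeeping argument with the $x_2$-limit inequalities, and the case $q \in \partial_-L$ versus $q\in\partial_+L$ is symmetric.
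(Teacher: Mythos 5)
Your main argument (steps (1)--(3), in particular step (2)'s observation that the \emph{incoming} boundary branch of $u_\alpha$ at $z_{\alpha\beta}$ glues to the \emph{outgoing} boundary branch of $u_\beta$ at $z_{\beta\alpha}$, and both come from a single boundary arc of $u^\nu$ lying on one leaf) is correct and is essentially the paper's proof: the paper makes this precise by working in a small neighbourhood $U$ of $q$ in which $u_n^{-1}(U)$, $u_\alpha^{-1}(U)$, $u_\beta^{-1}(U)$ are all biholomorphic to strips $\R\times[-1,1]$, tracking the Lagrangian boundary conditions $L_1 = \tilde u_\bullet(\R\times\{-1\})$ and $L_2 = \tilde u_\bullet(\R\times\{1\})$ through the convergence, and pinning down the allowed strip reparametrizations. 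So the approaches agree.

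One cautionary note: the ``clean way'' you float in the last paragraph is not correct as stated. You write that the node, once glued, ``reconstructs a single smooth-across-the-leaf-jump corner of $u^\nu$'' and that ``the corner of $u^\nu$ near $q$ has a well-defined sign.'' But $u^\nu$ has no corner at the degenerating neck: the nodal point $z_{\alpha\beta}$ is not the limit of any marked point $z_i^\nu$, so the boundary arc of $u^\nu$ that pinches to produce the node stays on a single leaf of $L$ on each side and passes through $q$ (if at all) smoothly, not via a leaf jump. There is nothing to assign a ``sign'' to at that stage; the two opposite-signed corners appear only in the limit. Your actual step (2) does not rely on this mistaken picture, so the proof goes through, but the alternative framing would not.
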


\begin{proof}
It is enough to show this for the case when $u_\alpha$ and $u_\beta$ are embedded. Indeed, if they are only immersed, we can restrict ourselves to a small enough neighbourhoods of $z_{\alpha \beta}$ and $z_{\beta \alpha}$, and repeat the same argument in those neighbourhoods. The notation and argument gets more cumbersome.

The main idea is that if we restrict our vision to a small enough neighbourhood of $q$, the situation looks exactly like that of converging strips in Lagrangian Floer homology setup. 

Choose a neighbourhood $q \in U \subset \R^4$ such that $(U \cap L) \setminus \{q\}$ has two connected components and $((U \cap L) \setminus \{q\} )\cap \Delta(L) = \emptyset$. Let us call these connected components union $\{q\}$ as $ L_1$ and $L_2$. That is, $U \cap L = L_1 \cup L_2$ and $L_1 \cap L_2 = \{ q \}$. We also take $U$ to be small enough so that $u_\alpha^{-1}(U)$ and $u_\beta^{-1} (U)$ have no special points other than $z_{\alpha\beta}$ and $z_{\beta\alpha}$ respectively, and $u_\alpha^{-1}(U) \setminus \{z_{\alpha \beta}\}$ and $u_\beta^{-1} (U) \setminus \{z_{\beta \alpha}\}$ are biholomorphic to the strip $\R \times [-1, 1] \subset \C$.
As $u_n$ converge to $(u_\alpha, u_\beta)$ in the Gromov sense, their images converge in $C^\infty$ to the union of the images, $\im (u_\alpha) \cup \im(u_\beta)$. So, by choosing $U$ to be small enough, we may assume $u_n^{-1} (U)$ contains no marked points and are biholomorphic to $\R \times [-1, 1] \subset \C$ as well. Take coordinates $t,s$ on the strip with $t \in \R$ and $s \in [-1, 1]$.

Let us define compact sets $K_\alpha$ and $K_\beta$ as follows. Denote by $\partial U$ the boundary of the closure of $U$. Note that we can assume sufficient regularity on the neighbourhood $U$, in particular, we may assume it is a domain in $\C \times \C$. Let $K_\alpha = \overline{ \partial U \cap \cO p (\im (u_\alpha))}$ and $K_\beta = \overline{ \partial U \cap \cO p (\im (u_\beta))}$ where $\cO p(\im (u_\alpha))$ and  $\cO p(\im (u_\alpha))$ are arbitrary, small neighbourhoods of $\im(u_\alpha)$ and $\im(u_\beta)$, respectively. We choose the open sets $\cO p(\im (u_\alpha))$ and $\cO p(\im (u_\beta))$ to be small enough that $K_\alpha \nsubseteq K_\beta$ and $K_\beta \nsubseteq K_\alpha$.

By restricting to large enough $n$ and pre-composing by appropriate biholomorphisms to the strip $\R \times [-1, 1] \subset \C$, we can assume we have $\tilde{u}_n: \R \times [-1, 1] \to \R^4$ for $n \in \N$, $n > N$ for some large $N$, with 
\begin{align*}
\lim_{t \to -\infty} \wt{u}_n(t,s) \in K_\alpha, \lim_{t \to \infty} \wt{u}_n(t,s) \in K_\beta;\\
\wt u_n(\R \times \{-1\}) \subset L_1, \wt u_n(\R \times \{1\}) \subset L_2.
\end{align*}
Of course, the $L_1$ and $L_2$ may be exchanged, but then it would be so for every $n$.

Additionally, $\tilde{u}_\alpha: \R \times [-1, 1] \to \R^4$ and $\tilde{u}_\beta: \R \times [-1, 1] \to \R^4$ with
\begin{align*}
\lim_{t \to \infty} \tilde{u}_\alpha (t,s) = q, \lim_{t \to -\infty} \tilde{u}_\alpha(t,s) \in K_\alpha;\\
\lim_{t \to \infty} \tilde{u}_\beta(t,s) \in K_\beta, \lim_{t \to -\infty} \tilde{u}_\beta (t,s) = q.
\end{align*}
Note that, here we have made some specific choices of biholomorphisms from \\
$\tilde{u}_\alpha^{-1} (U), \tilde{u}_\beta^{-1} (U)$, and $\tilde{u}_n^{-1}(U)$ to $\R \times [-1, 1]$. Our conclusions do not depend on this choice as any biholomorphism between domains (with boundary) of $\C$ preserves the orientation on the boundary. 

Now, the original Gromov convergence restricted to the neighbourhood $U$ is the Gromov convergence of $\tilde{u}_n$ to the pair $\tilde{u}_\alpha, \tilde{u}_\beta$ while preserving the above limit restrictions. So, the only allowed reparametrizations are translations of $\R \times [-1, 1]$ by real numbers in the $t$ coordinate. Actually there may be small translations in the $s$-direction as well, but we may ignore these for the purpose of this argument. We get this condition by keeping in mind the marked point conditions on the original convergence. Indeed, if both $u_\alpha$ and $u_\beta$ have two or more marked points each, we may view them as maps on strips already, and we readily get the limit conditions above by restricting to appropriate half strips. If they have less than two marked points, note that the allowed parametrizations are restricted because $u_\alpha$ and $u_\beta$ are non-constant. 
So, we can still get the above condition by adding an artificial marked point to keep track of these reparametrizations and can choose one (say $\tilde{z}_\alpha$ and $\tilde{z}_\beta$, respectively) appropriately, such that any path from $z_\alpha$ to $\tilde{z}_\alpha$ has to pass through $K_\alpha$. Then, if we set $z_\alpha$ to be the limit of this path at $\infty$ and $\tilde{z}_\alpha$ at $-\infty$ and want the chosen biholomorphism to preserve this linear order on the path, we get the required conditions on the biholomorphism.

Thus, from the convergence of $u_n$ and their boundary conditions, 
\begin{align*}
\tilde{u}_\alpha(\R \times \{-1\}) \subset L_1, \tilde{u}_\alpha(\R \times \{1\}) \subset L_2 \text{ and } \tilde{u}_\beta(\R \times \{-1\}) \subset L_1, \tilde{u}_\beta(\R \times \{1\}) \subset L_2.
\end{align*}
So, we see that $\partial \tilde{u}_\alpha$ jumps from $L_1$ to $L_2$ at $q$ and $\partial \tilde{u}_\beta$ goes from $L_2$ to $L_1$ at $q$. As, the orientation on the boundary was always preserved this implies $ \sign_{q}(u_\alpha) = - \sign_{q}(u_\beta)$.
 
\end{proof}

\subsection{Non-horizontal Disks have Fixed Sign at Corners}\label{sec_non horizontal disk sign} 
In this section, we show that corner signs for non-horizontal holomorphic disks are determined by which boundary of $L$ the corner is at, when $L$ is a Lagrangian tangle. This will give us the last part of Theorem \ref{thm_properties of type 2 disks}.
\begin{lem}\label{lem_signs of vert disks}
Let $L$ be a Lagrangian tangle.
Suppose $u$ is a non-horizontal holomorphic disk with corner at $q \in \partial_+ L$. Then $\sign_q(u) = +1$.
Similarly, $q \in \partial_- L$ implies $\sign_q(u) = -1$.
\end{lem}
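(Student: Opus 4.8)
The plan is to run the same Hopf-lemma argument that was used in Lemma \ref{lem_cpt_1}, but now localized at the corner point $q \in \partial_+ L$ and applied to the harmonic function $\pi_{y_2} \circ u$. Write $q \in \Delta(L)$, so near $q$ the Lagrangian $L$ consists of two leaves $L^h$ and $L^l$ (the higher and lower leaf in the $x_2$-direction for each fixed $y_2$). Let $p \in \partial \D$ be the marked point with $u(p) = q$, and let $p^-$ and $p^+$ be the one-sided limits along $\partial \D$, so $\partial u$ traverses one leaf as $\theta \to 0^-$ and the other as $\theta \to 0^+$. The sign $\sign_q(u)$ is, by definition, $+1$ exactly when $\partial u$ traverses $L^h$ first and then $L^l$, which is equivalent to the statement that $\lim_{\partial u \to q^-} x_2 > \lim_{\partial u \to q^+} x_2$.

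Here are the steps in order. First, since $u$ is non-horizontal, $\pi_{y_2} \circ u : \D \to \R$ is a non-constant harmonic function; since $\im(\partial u) \subset L \subset \R^4_{(-\infty, b]}$ near $q$ (as $q$ lies on the top boundary $\partial_+ L \subset \R^3_b$) and $\pi_{y_2}(q) = b$, the maximum principle forces $\pi_{y_2} \circ u(z) < b$ for all $z$ in the interior, so $p$ is a strict interior maximum of $\pi_{y_2}\circ u$ restricted to $\partial \D$ attained at a single point (up to the accumulation-point argument of Claim \ref{clm_accumulation point}, which rules out $\pi_{y_2}\circ u$ being $\equiv b$ on a sequence accumulating at $p$). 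Second, I would pick a small interior ball $B \subset \D^\circ$ with $p \in \partial \bar B$ and $\pi_{y_2}\circ u < b$ on $\bar B \setminus \{p\}$, exactly as in Lemma \ref{lem_cpt_1}, and apply the Hopf lemma: the inward normal derivative of $\pi_{y_2}\circ u$ at $p$ is strictly positive. Third — and this is the new content beyond Lemma \ref{lem_cpt_1} — I would use the Cauchy–Riemann equations to translate this normal-derivative inequality into a statement comparing the $x_2$-values of the two leaves as one rotates through the corner at $q$: the holomorphicity of $u$ converts the radial (normal) derivative of $\pi_{y_2}\circ u$ into the angular (tangential) derivative, which controls the rate at which $x_2$ decreases as $\partial u$ passes from the incoming arc to the outgoing arc. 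Concretely, in a $\pi$-wedge local model where the holomorphic corner occupies a sector, positivity of the Hopf derivative pins down that the $x_2$-coordinate of the arc $\partial u$ jumps \emph{down} (from higher leaf to lower leaf) as $\theta$ increases through $0$; equivalently $\lim_{\partial u \to q^-} x_2 > \lim_{\partial u \to q^+}x_2$, which is precisely $\sign_q(u) = +1$. The argument for $q \in \partial_- L$ is the mirror image: there $\im(\partial u) \subset \R^4_{[a,\infty)}$ near $q$, the maximum principle is replaced by the minimum principle, $p$ becomes a strict interior minimum of $\pi_{y_2}\circ u$ on $\partial\D$, the Hopf lemma gives a strictly negative inward normal derivative, and the same CR translation yields that $x_2$ jumps \emph{up}, i.e.\ $\sign_q(u) = -1$.

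The main obstacle I anticipate is making the third step — the passage from ``Hopf derivative of $\pi_{y_2}\circ u$ is positive'' to ``$x_2$ strictly decreases as $\partial u$ rounds the corner'' — genuinely rigorous at a corner point rather than a smooth boundary point. At a corner, $\partial u$ is only one-sided $C^1$ on each of the two incident arcs, $u$ is an immersion with a wedge of positive angle at $p$, and one has to be careful that the ``canonical short path'' bookkeeping used for Maslov indices does not secretly enter; but I expect one can sidestep all of this by working directly with the two leaves $L^h, L^l$, which are smooth embedded disks through $q$ whose tangent planes contain the $x_2$-axis direction appropriately, and by using transversality of $L$ to the slices $\R^3_t$ for $t$ near $b$ (part of Definition \ref{defn_lag tangle}) to guarantee that $x_2$ is a non-degenerate coordinate along each leaf near $q$. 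With that in hand, the sign of the jump in $x_2$ is determined by the sign of the jump in $y_2$ on the two nearby slices, which is exactly what the Hopf lemma controls, so the wedge structure causes no real trouble — it just needs to be spelled out carefully.
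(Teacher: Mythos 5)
Your proposal starts from the same two observations the paper uses — $\pi_{y_2}\circ u$ is harmonic and the maximum principle confines its image to $(-\infty,b]$ — but the mechanism you invoke to extract the sign (Hopf lemma at the corner point $p$, then Cauchy–Riemann to ``convert'' the normal derivative of $y_2\circ u$ into a tangential derivative of $x_2\circ u$) has a genuine gap precisely at the step you flag as delicate. At a corner the local model for $\pi_2\circ u$ (where $\pi_2(x_1,y_1,x_2,y_2)=(x_2,y_2)$) is $z\mapsto c(z-p)^{\alpha}$ with $\alpha\in(0,1)$, so the Hopf normal derivative of $y_2\circ u$ at $p$ is $+\infty$ and the one-sided tangential derivatives of $x_2\circ u$ at $p$ are $-\infty$ from both sides. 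The Cauchy–Riemann identity $\partial_\theta(x_2\circ u)=-r\,\partial_r(y_2\circ u)$ is only valid pointwise on the smooth arcs, and at $p$ it equates two infinite quantities, so it does not by itself decide the sign of the difference of $x_2\circ u$ on the two arcs. Resolving the sign requires determining $\alpha$ and $\arg c$; and once you are computing those, the Hopf lemma has done no work — it is a consequence of $\alpha<1$, not the input. (Also, your closing sentence about ``the sign of the jump in $y_2$'' cannot be made literal: $y_2\circ u$ is continuous at $q$, with both one-sided limits equal to $b$.)

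The ingredient that actually pins down $\alpha$ and $\arg c$ — and which your sketch never explicitly uses — is the anticlockwise orientation of the boundary of a holomorphic disk, and this is the whole content of the paper's proof. The paper linearizes $L$ near $q$, projects via $\pi_2$, and notes that $\pi_2\circ u$ is a holomorphic map to $\C$ whose boundary near $p$ lies on the two rays $\pi_2(L^h)$ and $\pi_2(L^l)$ bounding a wedge in the closed lower half-plane $\{y_2\le b\}$; since $\partial(\pi_2\circ u)$ traverses the boundary of this wedge anticlockwise (interior on the left), it must hit the $\pi_2(L^h)$ ray first and then $\pi_2(L^l)$, giving $\sign_q(u)=+1$. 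No Hopf estimate is needed, only the orientation together with the maximum-principle constraint. The cleanest repair of your third step is therefore to drop the Hopf lemma entirely and invoke the boundary orientation of $\pi_2\circ u$, which is exactly the paper's route.
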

Before we prove this lemma, let us discuss how we conclude Part 4 of Theorem \ref{thm_properties of type 2 disks}. Lemma \ref{lem_opposite sign} and the above Lemma \ref{lem_signs of vert disks} together give us that we cannot have two non-horizontal disks attached at a node. We claim that the case of two horizontal disks attached at a node also do not appear in our considered moduli space, $\ol \M(u)$. Suppose $\im(\p u) \subset \p_+ L = L_b$. In Section \ref{sec_dimension calculations}, we defined the real line bundle $T^1L$ along $\p \D$ as a subbundle of $\C \times \{0\} \subset \C \times \C \cong u^*(T\R^4)$ for the horizontal disk $u$. The Maslov index $\mu_1(u)$ of the bundle pair $(\C , T^1 L)$ will always be $2$ (for convex $u$) and $\mu_1(u)$ actually gets suppressed in the dimension calculations and we only see $\mu_2(u)$ in $\dim \M(u)$. Maslov index $\mu_1(u)=2$ implies the expected dimension of the moduli space of variations of $u$ within the plane is 
\begin{align*}
\dim(\mu(u, L_b)) = \mu_1(u) + 2 - 3 - 1 = 0.
\end{align*}
If we had two horizontal disks attached at a node $q \in \p_+ L$, we could glue within $\R^2_b$ to obtain a $1$-dimensional moduli space contained in $\M(u, L_b)$, which contradicts the dimension calculation. Thus, we can only have the situation where each node has exactly one horizontal disk and one non-horizontal disk attached, which is Part 4 of Theorem \ref{thm_properties of type 2 disks}. Now we prove the lemma.
\begin{proof}
First, we build a local model for $L$ near a double point $q$.
Suppose $q \in \partial_+L$. Let us assume that $\partial_+ L \subset \R^2_0$ and $q = 0 \in \p_+ L$.  Fix a small neighbourhood $U$ of $0$ in $\R^4$.  In $U$, the Lagrangian has linear approximation given by $(T_0 L^h \cup T_0 L^l) \cap \R^4_{-\infty, 0]}$, where $L^h$ and $L^l$ are the two leaves of $L$ near $q$. Upto a Hamiltonian isotopy, we may assume the Lagrangian is equal to this linear approximation in a small neighbourhood of $0$. We want to analyze what the Lagrangian $L$ looks like near $0$, so we analyze what two Lagrangian subspaces of $\R^4$ intersecting at one point can look like. Consider $L^h \cap (\R^2 \times \{0\})$. It is a line passing through $0$, let us call it $l_{11}$. It is given by an equation $a_1x_1+b_1y_1 = 0$ for $a,b \in \R$. Any Lagrangian plane containing this line lies in the symplectic complement of $\ii(l_{11})$. So, the other generator of $L_1$ must be a line in $\{0\} \times \R^2$, say $a_2x_2 + b_2x_2 = 0$, $a_2, b_2 \in \R$. Similarly, the subspace $T_0 L^l = \{c_1x_1 + d_1y_1 = 0, c_2x_2 + d_2y_2 = 0\}$, for some $c_1, d_1 \in \R$. To get a local model for the Lagrangian tangle $L$, we have to impose the condition $\{y_2 \leq 0\}$. 

Recall the notation $L^h$ is the higher leaf and $L^l$ is the lower leaf near $q = 0$ with respect to the $x_2$-coordinate.
For ease of notation we can fix coefficients without loss of generality as follows:
Locally, $L \cap U = L^h \cup L^l$, where
\begin{align*}
L^h = \{(x_1, y_1, x_2, y_2) \in U | y_1 =0, x_2 + y_2 = 0, y_2 \leq 0 \},\\
L^l = \{(x_1, y_1, x_2, y_2) \in U | x_1 =0, x_2 - y_2 = 0, y_2 \leq 0 \}.
\end{align*}

Now, we prove the lemma.
If $u: (\D, \partial \D) \to (\R^4, L)$ is a holomorphic map with corner at $q = 0$, then $\pi_2 \circ u: (\D, \partial \D) \to (\R^2, \pi_2(L))$ is also a holomorphic map with corner at $0$. Here $\pi_2: \R^4 \to  \R^2$ is the projection $(x_1, y_1, x_2, y_2) \mapsto (x_2, y_2)$. Note that $\pi_2(L \cap U) = \pi_2(L^h) \cup \pi_2(L^l)$ where
\begin{align*}
\pi_2(L^h) = \{(x,y) \in \R^2 | x+y = 0, y \leq 0\},\\
\pi_2(L^l) = \{(x,y) \in \R^2 | x-y = 0, y \leq 0\}.
\end{align*}
As $u$ is holomorphic, the boundary of $u$ has to travel anti-clockwise on the complex plane. So, if $\pi_2 \circ u$ is non-constant, then its boundary traverses $\pi_2(L^h)$ first and then $\pi_2(L^l)$. If $\pi_2 \circ u$ is constant, then $u$ is a horizontal disk. Thus, for non-horizontal disks $\sign_q(u) = +1$.

The case for $q \in \p_- L$ is analogous.
\end{proof}

\section{Applications: Obstructions to the Existence of Undercut Relation}\label{sec_applications}
In this section, we give applications of Theorem \ref{thm_main application} in the form of obstructions to undercutting relation on diagrams. We also give proofs of the corollaries mentioned in Section \ref{sec_intro}. Many similar results can be obtained by applying Theorem \ref{thm_main application} and we only present select few.

Recall that topological data gives  a priori restrictions on Lagrangian cobordisms. Let $\pi_1: \R^4 \to \R^2$ be the projection $(x_1, y_1, x_2, y_2) \mapsto (x_1, y_1)$. If there is a cobordism $K_1 \prec_L K_2$, then $\pi_1(K_1)$ and $\pi_1(K_2)$ must bound the same signed area and have the same rotation number. Further, let us denote by $\mathrm{wr}(K_1)$ the writhe of the diagram $\pi(K_1) = \cD_{K_1}$ with respect to the framing $\ker \omega|_{\R^3_a}$ also known as black board framing. That is, it is the sum of the number of  positive crossings minus the number of negative crossings as shown in Figure \ref{fig:crossing sign}, when viewed from positive infinity on the $x_2$-axis. Then, for $K_1 \prec_L K_2$
\begin{align}\label{eqn_euler char from writhe}
\mathrm{wr}(K_1) - \mathrm{wr}(K_2) = \chi(L)
\end{align}
where $\chi$ denotes the Euler characteristic (see \cite{ST}). An interesting observation is that Gromov's theorem stating that there are no closed weakly exact Lagrangians in $\R^4$, implies that there is no knot that can be both filled and capped by an exact Lagrangian. This would also follow from the above Euler characteristic computations.
\begin{figure}[h!]
\begin{center}
  \includegraphics[width = 2in, height=1in]{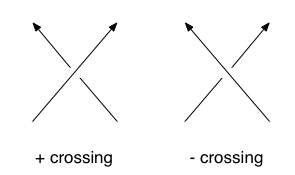}
\end{center}
\caption{Signs of crossings for calculating writhe.}
\label{fig:crossing sign}
\end{figure}

We want to study those situations where these topological constraints are inconclusive. For example, when $K_1$ and $K_2$ have the same writhes. We first prove Theorem \ref{thm_main application}, stated in the Introduction.
\begin{proof}
Suppose $[\cD_1,\sigma_1, \cA_1]\prec [ \cD_2, \sigma_2, \cA_2]$. This means that there exists a Lagrangian cobordism $L' \subset \R^4_{[a,b]}$ with $\p_+ L' = K_{\cD_2}$ and $\p_- L' = K_{\cD_1}$. Now, by using Lemma \ref{lem_tangle with same diagrams}, we get a Lagrangian tangle $L \subset \R^4_{[a,b]}$ with diagram of $\p_+ L = [\cD_2]$ and diagram of $\p_+ L = [\cD_1]$. This also means $\p_+ L = [\cD_2]$ and $\p_+ L = [\cD_1]$ as immersed curves. 

Suppose we have a big disk bound by the diagram pair. This means there exists a big disk bound by $\p_+ L$ or $\p_- L$ in the plane $\R^2_a$ or $\R^2_b$. In either case, we get $(u, {\bf z})$, a big, horizontal, holomorphic disk with corners, that has boundary on $L$. As $u$ is a big disk, $\ol\M(u)$ is a compact $1$-dimensional manifold (Theorem \ref{thm_automatic transversality}, Theorem \ref{thm_cptness}) with $[u] \in \ol \M(u)$ representing a boundary point of the moduli space (Part 1 of Theorem \ref{thm_properties of type 2 disks}). 

By Theorem \ref{thm_properties of type 2 disks}, Part 2, we must have even number of type 2 holomorphic disks as boundary points for $\ol \M(u)$. So there must exist a stable, holomorphic, Type 2 disk, $[{\bf u', z'}]$, distinct from $[u, {\bf z}]$.

If ${\bf u'}$ does not have any nodes, then it is a horizontal disk by Part 1 of Theorem \ref{thm_properties of type 2 disks}. In this case, from the definition of $\M(u)$, all the corners of $u'$ are the same as those of $u$, with the same signs.

If $({\bf u', z})$ is a stable disk with nodes, by Theorem \ref{thm_properties of type 2 disks}, the stable disk of type 2  would have at least one horizontal disk, say $v: (\D, \p \D) \to (\R^4, L)$. We claim that $v$ gives us a little disk to $u$ for the considered diagram pair. To prove the claim, take the case when $\im(u) \subset \R^2_b$. Then, $u$ has all positive corners. 
\begin{itemize}
\item If $\im (v) \subset \R^2_b$, then each corner $q$ of $v$ is either a corner point shared with $u$ or it is a node of the stable holomorphic disk that $v$ is a component of. If $q$ is a node, then it is a node of a stable holomorphic disk of type 2. So, it has to be a node between $v$ and a non-horizontal disk $v'$. As $v'$ is not horizontal, and $q \in \p_+ L$, by Lemma \ref{lem_signs of vert disks}, $\sign_q(v') = +1$. So, by Lemma \ref{lem_opposite sign}, $\sign_q (v) = - \sign_q (v') = -1$. If $q$ is shared by $v$ and $u$, then it is a corner point of the stable holomorphic disk and by virtue of the definition of $\ol \M(u)$, has the same sign as $\sign_q (u)$. So, $\sign_q (v) = +1$. 
\item In the case $\im(v)\subset \R^2_a$, all its corners are nodes of a stable holomorphic disk of type 2. Each node has an attached non-horizontal disk that must have sign equal to $-1$ at the node. So, all of $v$'s corners are positive by Part 3 of Theorem \ref{thm_properties of type 2 disks}. For the case when $
im (u) \subset \R^2_b$, the argument is analogous.
\end{itemize}

The area inequality comes from the fact that every holomorphic disk must have strictly positive area. Suppose, $v$ is the little disk of $u$ that appears as a horizontal disk of $({\bf u', z'})$, the stable holomorphic disk of Type 2  that represents a boundary point of $\ol \M(A)$. Then, 
\begin{align*}
\sum_{\alpha \in I} \omega([u'_\alpha]) = \omega \left(\sum_{\alpha \in I} [u'_\alpha]\right) = \omega (A) = \omega([u]).
\end{align*}
As each $u'_\alpha$ is a holomorphic disk, $\omega([u'_\alpha]) > 0$. So, we get that
\begin{align*}
\omega([v]) = \omega([u]) - \sum_{\alpha \in I, u'_\alpha \neq v} \omega([u'_\alpha]) < \omega([u]).
\end{align*}
The only exception to the above inequality happens when $[v] = [u] \in H_2(\R^4, L)$. 
\end{proof} 

Thoerem \ref{thm_main application} gives us many exciting corollaries. We remark here again that these are only example applications, not by any means an exhaustive list but instead a select few. 

Note that Corollaries \ref{cor_figure 8 knot}, \ref{cor_trefoil knot}, and \ref{cor_no relation exists}, is stated in terms of the undercut relation. This means we are proving that there cannot exist relatively exact Lagrangian cobordisms with the given boundary conditions, but in all of these examples the observation in Remark \ref{rem_automatic relative exactness} implies that any possible Lagrangian cobordism will be relatively exact. So, in fact, we are showing that no Lagrangian cobordisms can exist with these boundary enriched knot diagrams.
\begin{cor}\label{cor_figure 8 knot}
For $A, B > 0$, we observe the following growing and shrinking behaviour among enriched knot diagrams. Suppose $8_-$, $8_+$, $E_-$, and $E_+$ denote exact enriched knot diagrams as in Figures \ref{fig:8knots}, and \ref{fig:trefoils}.
\begin{enumerate}
\item If $8_-(A) \prec 8_-(B)$, then $A > B$;
\item If $8_+(A) \prec 8_+(B)$, then $A < B$;
\item If $E_-(A) \prec E_-(B)$, then $A > B$;
\item If $E_+(A) \prec E_+(B)$, then $A < B$
\end{enumerate}
Here, assume that the total areas of all the diagrams is $0$.
\end{cor}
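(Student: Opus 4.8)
### Proof strategy for Corollary \ref{cor_figure 8 knot}

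The plan is to derive each of the four statements by applying Theorem \ref{thm_main application} to a carefully chosen big disk bound by the relevant diagram pair, and then observing that the resulting little disk forces the claimed area inequality. The four cases are mirror images of each other (swapping the roles of $\cD_1$ and $\cD_2$, and of positive and negative corners), so it suffices to carry out one of them in detail — say statement (2), $8_+(A) \prec 8_+(B) \implies A < B$ — and remark that the others follow by the symmetry between $\p_+$ and $\p_-$ and between the two crossing signs.

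First I would fix the combinatorics of the $8_+$ diagram. The figure-eight curve $8_+$ has a single crossing and cuts out two bounded disks; call their areas $a$ and $a'$. The exactness hypothesis ($\int_\cD x_1 dy_1 = 0$) means the total signed area is zero, so with the sign conventions of Definition \ref{defn_diagram} the two lobes contribute with opposite signs and we get $a = a' =: A$ (this is why the single parameter $A$ labels the diagram). Next I would identify, in the diagram pair $((8_+(A)), (8_+(B)))$, a disk $A_{\mathrm{disk}}$ bound by the \emph{lower} diagram $\cD_1 = 8_+(A)$ with all positive corners — concretely, one of the two lobes of $8_+(A)$ oriented so that its unique corner is positive. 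One then checks this disk satisfies all four conditions of Definition \ref{defn_big little disk}: it has a convex corner, it is bound by $\cD_1$ with all positive corners (case matching the second bullet), it is aligned in the sense of Definition \ref{defn_aligned disk} since it has only one boundary arc, and since the diagrams are exact we are in the case $a(D_1) = a(D_2) = 0$, so condition (4) is automatic. Its area is $A$.

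Then I would invoke Theorem \ref{thm_main application}: since $8_+(A) \prec 8_+(B)$ and the big disk $A_{\mathrm{disk}}$ exists, there must be a little disk $B_{\mathrm{disk}}$ relative to it with $\mathrm{area}(A_{\mathrm{disk}}) \geq \mathrm{area}(B_{\mathrm{disk}})$, i.e. $A \geq \mathrm{area}(B_{\mathrm{disk}})$. Because $A_{\mathrm{disk}}$ is bound by $\cD_1$, the relevant options for a little disk are cases (b) and (c) of Definition \ref{defn_big little disk}: either $B_{\mathrm{disk}}$ is also bound by $\cD_1 = 8_+(A)$ with the sign constraints there, or $B_{\mathrm{disk}}$ is bound by $\cD_2 = 8_+(B)$ with all negative corners. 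The heart of the argument is to enumerate the finitely many holomorphic disks with corners bound by the single-crossing diagram $8_+$ — there are very few, essentially the two lobes with each choice of corner sign, plus the disk wrapping once around each lobe — and rule out every possibility except the one where $B_{\mathrm{disk}}$ is a lobe of $8_+(B)$ with a negative corner, which has area exactly $B$. Combined with the strict inequality from the equality clause of Theorem \ref{thm_main application} (equality would force $[A_{\mathrm{disk}}] = [B_{\mathrm{disk}}]$ and a shared corner, impossible when one is bound by $\cD_1$ and the other genuinely by $\cD_2$ in a nontrivial cobordism), we get $A > \mathrm{area}(B_{\mathrm{disk}}) = B$ — wait, I must be careful with the direction here: one wants $A < B$, so the big disk should in fact be taken in the \emph{upper} diagram. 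I would take the big disk to be a lobe of $\cD_2 = 8_+(B)$ with all positive corners (matching the second bullet of Definition \ref{defn_big little disk} for $\cD_2$), of area $B$; then the little disk lives in $\cD_1 = 8_+(A)$ (case (c)) or in $\cD_2$ with negative corners (case (a)), the disk enumeration pins it down to a lobe of $8_+(A)$, and $B = \mathrm{area}(A_{\mathrm{disk}}) \geq \mathrm{area}(B_{\mathrm{disk}}) = A$ with equality excluded, giving $A < B$.

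The main obstacle I anticipate is the disk-enumeration step: one must argue that a single-crossing (resp. two-crossing, for $E_\pm$) enriched knot diagram bounds only the obvious short list of holomorphic disks with corners, so that the little disk produced by Theorem \ref{thm_main application} cannot be something exotic with a larger area that would defeat the inequality. For the $E_\pm$ diagrams (Figure \ref{fig:trefoils}) this enumeration is genuinely more involved since there are more crossings and more cut-out regions, and one also has to track the sign conditions from Definition \ref{defn_big little disk} carefully to see which regions can serve as big disks and which as little disks; but the exactness hypothesis again fixes the relative areas of the lobes in terms of the single parameter $A$, which keeps the bookkeeping manageable. Once the enumeration is done, cases (1), (3), (4) follow by the same template with the evident relabeling, and statements (1) and (3) (the "shrinking" cases) come out with the inequality reversed precisely because there the natural big disk lives in the \emph{lower} diagram with all negative corners.
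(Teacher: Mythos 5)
Your strategy is essentially the paper's: pick a lobe as the big disk, invoke Theorem \ref{thm_main application} to produce a little disk, and squeeze the areas. After your mid-stream self-correction (moving the big disk to the upper diagram so that it is bound by $\cD_2$ with all positive corners, as Definition \ref{defn_big little disk} requires), the skeleton matches the paper's argument for parts (1)--(2).

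However, there is a genuine gap in how you exclude unwanted little disks. When the big disk $A_{\mathrm{disk}}$ is a lobe of $\cD_2 = 8_+(B)$, Definition \ref{defn_big little disk} produces two kinds of candidate little disks: lobes of $\cD_1 = 8_+(A)$ (case (c)) \emph{and} the \emph{other lobe of $\cD_2$ itself} (case (a) --- it shares the unique crossing with $A_{\mathrm{disk}}$ with the same sign and has no negative corners, so the condition ``must share all of $A$'s corners'' is satisfied). That second candidate has area exactly $B = \mathrm{area}(A_{\mathrm{disk}})$, so area considerations alone do not rule it out; you cannot simply say ``the disk enumeration pins it down to a lobe of $8_+(A)$.'' The paper dispatches this case by observing, via Equation (\ref{eqn_euler char from writhe}), that equal writhes force the Lagrangian cobordism to be a cylinder (so the tangle is a four-punctured sphere), hence the two lobes of the same slice represent distinct classes in $\Pi_2(\R^4, L)$, and the equality clause of Theorem \ref{thm_main application} then excludes this candidate. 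Your equality-exclusion remark only addresses the cross-diagram case (little disk bound by $\cD_1$), not the same-diagram case, which is exactly where the homological input is needed. For parts (3)--(4), the paper also uses a slightly different mechanism than you sketch: rather than a homology argument, it notes that exactness forces the remaining candidate little disk $C$ (see Figure \ref{fig:trefoils}) to have area $C > B$, which the area inequality of Theorem \ref{thm_main application} immediately excludes, leaving only the disk of area $A$. Your $E_\pm$ discussion is too vague to certify that you would find this route; you should spell out, as the paper does, which regions are candidate little disks and why all but one are excluded by the $\mathrm{area}(A) \geq \mathrm{area}(B)$ constraint.
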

\begin{figure}[h!]
\begin{center} 
  \includegraphics[width = 5in]{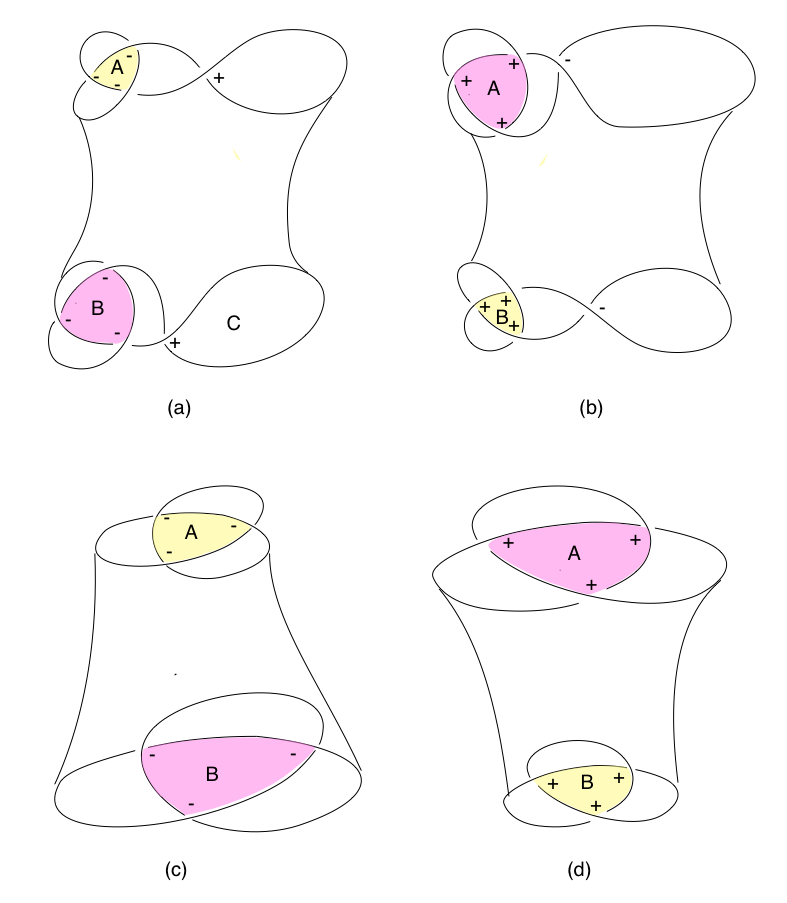}
   \end{center}
 \caption[caption]{\tabular[t]{@{}l@{}}In all the figures the pink disk has to be larger than the yellow disk:\\ (a) $E_-(B) \prec E_-(A)$ (b) $E_+(B) \prec E_+(A)$\\ (c) $T_-(B) \prec  T_-(A)$ (d) $T_+(B) \prec T_+(A)$\endtabular}
\label{fig:trefoils}
\end{figure}
\begin{proof}
\begin{enumerate}
\item[(1,2)]
Suppose $8_-(A) \prec 8_-(B)$. The diagram $8_-(A)$ cuts out two disks of same area $A$, and both are big disks. Let us pick one of them to be $A_1$ to apply Theorem \ref{thm_main application}. All the other disks cut out by the pair, $(8_-^1(A), 8_-^1(B))$, are little disks to $A_1$. Let the other disk cut out by the lower diagram be $A_2$. Let the two disks cut out by the upper diagram be $B_1$ and $B_2$. 

Suppose $L \subset \R^4_{[a,b]}$ is a Lagrangian tangle with $\p_+ L = \cD_{8_-(B)}$ and $\p_- L = \cD_{8_-(A)}$
Note that the writhes of both the diagrams are the same, which means the cobordism will be a cylinder. This means that the tangle $L$ is topologically a sphere minus four points.
So, even though $A_2$ is a little disk to $A_1$, $\area(A_1) = \area(A_2)$ but $[A_1] \neq [A_2] \in H_2(\R^4, L)$. So, we must have $\area(B_1) = \area(B_2) < \area(A_1)$, which implies $A > B$.

The proof for the $8_+$ case is analogous.
\item[(3,4)] If $E_-(B) \prec E_-(A)$, then $B$ is a big disk. The candidates for little disk for $B$ are $A$ and $C$ as labelled in Figure \ref{fig:trefoils} (a). But as the total bounded area of $E_-(B)$ is zero, $C > B$. So, $A < B$ by Theorem \ref{thm_main application}.

The proof for the $E_+$ statement is analogous.
\end{enumerate}
\end{proof}

We also get obstructions on diagrams that ae not exact.
\begin{cor}\label{cor_trefoil knot}
For $A, B > 0$,  the following growing and shrinking behaviour among trefoil knots hold.
\begin{enumerate}
\item  If $T_-(A) \prec T_-(B)$, then $A > B$;
\item If $T_+(A) \prec T_+(B)$, then $A < B$.
\end{enumerate}
\end{cor}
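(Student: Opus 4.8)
\textbf{Proof proposal for Corollary \ref{cor_trefoil knot}.}

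The plan is to mimic the argument for the figure-eight diagrams in Corollary \ref{cor_figure 8 knot}, applying Theorem \ref{thm_main application} to a well-chosen big disk in the trefoil diagram. First I would unwind the definitions: suppose $T_-(A) \prec T_-(B)$, so there is a Lagrangian tangle $L \subset \R^4_{[a,b]}$ with $\cD_{\p_- L} = \cD_{T_-(A)}$ and $\cD_{\p_+ L} = \cD_{T_-(B)}$, the tangle coming from an honest relatively exact Lagrangian cobordism via Lemma \ref{lem_tangle with same diagrams} (relative exactness being automatic here by Remark \ref{rem_automatic relative exactness}, since the two trefoil diagrams have equal writhe, forcing $\chi(L)=0$ and hence $L$ a cylinder). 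The $T_-$ diagram (Figure \ref{fig:defn_diagrams}(c), Figure \ref{fig:trefoils}(c)) cuts out a distinguished disk — the large ``pink'' region bounded by an arc with all negative (convex) corners — which I want to verify is a big disk in the sense of Definition \ref{defn_big little disk}: it has all convex corners, it is bound by $\cD_1 = \cD_{T_-(A)}$ with all negative corners, it is aligned (the boundary orientation is consistent around the outer region), and since the total signed area is nonzero in the $T_-$ case I need the area condition $\area(A)\le a(D_1)$, which I would arrange by choosing which region plays the role of the big disk appropriately, or by noting the inequality $a(\cD_1)=a(\cD_2)$ and tracking areas carefully.

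The core step is then to identify all the little disks relative to this big disk $A$ and to see which ones can realize the equality case. By Theorem \ref{thm_main application} there is a little disk $B'$ bound by the pair with $\area(A) \ge \area(B')$, with equality only if $[A]=[B']$ in $\Pi_2(\R^4,L)$ and they share all corners. The candidates for the little disk are the small region(s) of the $T_-$ diagram (the ``yellow'' disk in Figure \ref{fig:trefoils}(c)) together with regions of the upper diagram $\cD_{T_-(B)}$ with all positive corners. As in the figure-eight argument, I would use the topology of $L$ (a four-punctured sphere, since it is a cylinder as a cobordism with two boundary circles, but here the links are knots so $\p_\pm L$ are single circles and $L$ is a genuine cylinder — I must be careful whether the little-disk candidates lie in distinct homology classes). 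The point is to show that no candidate little disk with the same corners as $A$ can lie in the same relative homology class $[A]$: any such disk would have to be the outer region itself, which is excluded since $B' \ne A$. Hence the inequality is strict: $\area(A) > \area(B')$. Translating this back through the area assignments in the two diagrams — $\area$ of the pink region of $T_-(A)$ strictly exceeds $\area$ of the corresponding region (yellow disk, or a positive-cornered upper disk) — and using that the total signed areas of $\cD_{T_-(A)}$ and $\cD_{T_-(B)}$ agree forces $A > B$. The $T_+$ case is the mirror: one takes the big disk with all positive convex corners bound by the upper diagram, and the same argument gives $A < B$.

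The main obstacle I anticipate is the bookkeeping of \emph{which} region is the big disk and which are the little disks, together with the verification that the equality case cannot occur. In the exact figure-eight case the two big-disk candidates had equal area but distinct homology classes, which was exactly what made the strict inequality go through; for the trefoil I expect an analogous phenomenon but it requires genuinely examining the region structure of the $T_\pm$ diagrams — counting corners, checking convexity and alignment, and confirming that the homology class of the candidate little disk (which the cylinder topology of $L$ constrains via $\Pi_2(\R^4,L) \cong \Z$) differs from that of the big disk whenever the two disks are not literally equal. A secondary subtlety is the non-exactness: unlike Corollary \ref{cor_figure 8 knot}, here $a(\cD_i) \ne 0$, so I cannot invoke weak exactness and must instead carry the area-comparison hypothesis $\area(A)\le a(D_1)$ of the big-disk definition explicitly, which may require first applying an enriched knot diagram move (Proposition \ref{prop_moves}) or otherwise normalizing the diagram so that the distinguished region is not too large relative to $a(\cD_1)$.
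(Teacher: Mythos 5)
Your overall approach is the same as the paper's: exhibit a big disk bound by the lower diagram, invoke Theorem~\ref{thm_main application}, identify the (unique) little disk, and read off the area inequality. A few places need to be tightened, though.

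First, a sign slip: with the big disk bound by $\cD_1 = T_-(A)$ and having all negative corners, Definition~\ref{defn_big little disk}(d) says a little disk bound by $\cD_2 = T_-(B)$ must have \emph{all negative} corners, not all positive. The negative-cornered region of $T_-(B)$ is exactly the region of area $B$, which is why the paper can say it is the only candidate. (Your phrase ``regions of the upper diagram with all positive corners'' would instead be the case where the big disk sits in the upper diagram.)

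Second, the verification that the big disk satisfies the area hypothesis $\area(A) \le a(\cD_1)$ is much cleaner than what you sketch. You do not need to normalize the diagram, apply a Reidemeister-type move, or appeal to $a(\cD_1)=a(\cD_2)$; the point is simply that for the trefoil every disk cut out by $\cD_1$ contributes with the same sign to $\int_{\cD_1} x_1\,dy_1$, so $a(\cD_1)$ is the sum of \emph{all} the region areas and any single region has strictly smaller area. This is the observation the paper uses, and it is what distinguishes the $T_\pm$ (nonexact) cases from the $E_\pm, 8_\pm$ (exact) cases.

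Third, your worry about strictness is legitimate but the resolution is simpler than the homology-class tracking you describe. Theorem~\ref{thm_main application} permits equality only when $[A]=[B']$ \emph{and} the two disks share all corners. Here the big disk has its corners on $\p_- L$ and the little disk has its corners on $\p_+ L$, so they cannot share any corner; the equality case is thus excluded outright, without needing to analyze $\Pi_2(\R^4,L)\cong \Z$. (The figure-eight proof in Corollary~\ref{cor_figure 8 knot} genuinely needed the homology-class comparison because there the two equal-area candidates live in the \emph{same} diagram; for the trefoil the little disk necessarily lives in the other diagram.)

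With those corrections your argument matches the paper's.
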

\begin{proof}
Note that $B$ is a big disk bound by $T_-(B)$. In particular, $B < a(T_-(B))$ as all the disks cut out by $T_-(B)$ contribute positively to $a(T_-(B))$.
$A$ is the only possible little disk to $B$. So, we must have $A<B$.
The proof for $T_+$ case is analogous.
\end{proof}

\begin{cor}\label{cor_area conditions}
Following are examples of obstructions to undercutting when the two enriched knot diagrams do not have the same underlying (topological) knot diagram. We assume that all the enriched knot diagrams in this corollary are exact. (See Figure \ref{fig:defn_diagrams}.)
\begin{enumerate}
\item If $\emptyset \prec C^{-++}(A_1,A_2,A_3)$ for $C^{-++}$ as in Figure \ref{fig:defn_diagrams} with total area equal to zero, then $$A_3 > A_2.$$
\item Let $A_1 > A_2 > A_3 > 0$. If $C^{-++}(B_1, B_2, B_3) \prec C^{-++}(A_1, A_2, A_3)$, then $$A_3 > \min(B_3, B_4),$$ where $B_4 = B_1 - B_2 + B_3$ is the area of the unmarked lobe. 
\item For $A_1 > A_2 > A_3>0$, and $B>0$, if $8_+(B) \prec C^{-++}(A_1, A_2, A_3)$, then 
\begin{align*}
A_3 > B.
\end{align*}
\end{enumerate}
\end{cor}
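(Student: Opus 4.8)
\textbf{Proof proposal for Corollary \ref{cor_area conditions}.}

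The plan is to apply Theorem \ref{thm_main application} in each of the three cases, after identifying the correct ``big disk'' bound by the relevant diagram pair and enumerating the possible ``little disks'' relative to it. The uniform strategy is: (1) spot a disk $A$ that satisfies all four conditions of Definition \ref{defn_big little disk} (convexity of corners, correct sign pattern relative to which diagram of the pair bounds it, being aligned, and the area bound $\area(A) \le a(\cD_1)$, which for exact diagrams is automatic when $a(\cD_1)=a(\cD_2)=0$); (2) list every disk bound by the pair that could serve as a little disk relative to $A$ using clauses (a)--(d) of Definition \ref{defn_big little disk}; (3) invoke Theorem \ref{thm_main application} to conclude $\area(A)\ge\area(B)$ for at least one such $B$; and (4) use the exactness/total-area-zero hypothesis plus the relative homology/topology discussion to rule out the equality case or to pin down which little disk must occur.

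For part (1), $\emptyset \prec C^{-++}(A_1,A_2,A_3)$: here $\cD_1=\emptyset$ and $\cD_2 = C^{-++}$, so a big disk $A$ must be bound by $\cD_2$ with all positive corners. The lobe with area $A_3$ (whose two corners are both the $+$ corners in the superscript $-++$) is the natural candidate for $A$. Since $\cD_1=\emptyset$ has no disks, the only little disks relative to $A$ come from clause (a): disks bound by $\cD_2$ that are positive only at corners shared with $A$ and negative elsewhere. I would check that the $A_2$-lobe qualifies (it shares no corners or the appropriate shared-corner condition, forcing it to have a negative corner so it reads off correctly), conclude $A_3 \ge A_2$ by Theorem \ref{thm_main application}, and then exclude equality: equality would force $[A]=[B]$ in $\Pi_2(\R^4,L)$ and that they share all corners, which is incompatible with $\emptyset$ being one end (there is no cylinder here; the filling has $b=1$ boundary component), so we get the strict inequality $A_3>A_2$.

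For parts (2) and (3) the same machinery applies with $\cD_1 = C^{-++}(B_1,B_2,B_3)$ or $8_+(B)$ respectively and $\cD_2 = C^{-++}(A_1,A_2,A_3)$: the $A_3$-lobe of $\cD_2$ is again the big disk (positive corners, convex, aligned, and the area bound holds since exactness gives $a(\cD_1)=a(\cD_2)=0$). For part (3) the only little disks relative to it that can be bound by $\cD_1 = 8_+(B)$ come from clause (c): disks bound by $8_+(B)$ with all positive corners --- and the $8_+$ lobe(s) of area $B$ have this property --- giving $A_3 > B$ after ruling out equality (distinct underlying knot diagrams, so the homology classes cannot agree). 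For part (2), the little-disk candidates are the lobes of $C^{-++}(B_1,B_2,B_3)$ with all positive corners via clause (c), which forces attention onto $B_3$ and the unmarked lobe $B_4 = B_1-B_2+B_3$; Theorem \ref{thm_main application} then yields $A_3 \ge \min(B_3,B_4)$, upgraded to strict inequality because the two diagrams, while combinatorially similar, are not related by a homology class identification compatible with sharing all corners once the areas $A_i$ are ordered strictly as assumed.

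The main obstacle I expect is the careful bookkeeping in step (2): correctly reading off, for each concrete diagram in Figure \ref{fig:defn_diagrams}, exactly which disks are bound by the pair, which of their corners are shared with the chosen big disk, and hence which of clauses (a)--(d) each candidate falls under --- getting the sign conventions ($-++$ superscript, anticlockwise boundary orientation, the ``same sign at a shared corner'' convention) exactly right is where the proof could go wrong. A secondary subtlety is justifying the strict inequality in each case: this requires either a direct topological argument (the relevant Lagrangian is a cylinder / has the wrong number of boundary components for $[A]=[B]$) or the observation that distinct underlying knot diagrams preclude the homology-class equality, and I would want to state once, carefully, why the equality clause of Theorem \ref{thm_main application} is vacuous in each of the three situations.
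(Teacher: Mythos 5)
Your approach is the same as the paper's. The paper's proof of this corollary is a two-sentence remark --- ``In all of these cases, the disk with area $A_3$ is a big disk. We make the above conclusions by tracking through possible little disks and obtain these area conditions.'' --- so you have correctly identified the big disk, the relevant clauses of Definition \ref{defn_big little disk}, and the role of Theorem \ref{thm_main application}, and your overall plan matches.

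A few of your details deserve tightening, though. Your reason for ruling out equality in part (1), ``there is no cylinder here; the filling has $b=1$ boundary component,'' is not the mechanism Theorem \ref{thm_main application} provides: what fails is the \emph{share-all-corners} condition (the $A_3$- and $A_2$-lobes sit at different crossings of $C^{-++}$, unlike the two lobes of an $8$-diagram which share the unique crossing). In fact for a filling one has $\chi(L) = -\mathrm{wr}(C^{-++}) = -1$, so $L$ is a genus-$1$ surface with one boundary component and $\Pi_2(\R^4, L)$ is nontrivial --- so the homology class of Theorem \ref{thm_main application} cannot be ruled out purely by counting boundary components. For parts (2) and (3), the cleaner observation is that $A_3$ is bound by $\cD_2$ while the candidate little disks are bound by $\cD_1$, so their corner sets lie at double points on $\p_+ L$ and $\p_- L$ respectively (distinct points of $\Delta(L)$), hence they cannot share any corners; ``distinct underlying knot diagrams'' is not by itself the reason. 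Finally, your parenthetical claim that the $A_3$-lobe has ``two corners, both $+$'' and your description of how the $A_2$-lobe qualifies under clause (a) are stated hedgingly; to make a rigorous argument (particularly to show that \emph{every} little disk has area at least that of the claimed minimizer, which is what the contrapositive of Theorem \ref{thm_main application} actually requires) you would need to read these off from the figure explicitly, including the unmarked lobe $A_4$ whose area, by exactness, satisfies $A_4 = A_1 - A_2 + A_3 > A_3$ when $A_1 > A_2$.
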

\begin{proof}
In all of these cases, the disk with area $A_3$ is a big disk. We make the above conclusions by tracking through possible little disks and obtain these area conditions.
\end{proof}

\begin{cor}\label{cor_no relation exists}[Lemma \ref{lem_partial order}, Part 3]
There exist diagrams that are unrelated. 
For example, for $0 < A < B$, $8_+(A) \nprec C^{+-+}(A, B, B)$ and $C^{+-+}(A, B, B) \nprec 8_+(A)$. Thus, $\prec$ is a partial order.
\end{cor}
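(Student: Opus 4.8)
The plan is to apply Theorem \ref{thm_main application} twice, once for each putative direction of the undercut relation, and in each case exhibit a big disk bound by the relevant diagram pair whose only candidate little disks force a contradiction with the stated areas. First I would fix the notation: write $8_+(A)$ for the figure-eight diagram whose two lobes each have area $A$ (total signed area zero), and $C^{+-+}(A,B,B)$ for the three-lobe ``pretzel''-type diagram of Figure \ref{fig:defn_diagrams}(e) with lobe areas $A$, $B$, $B$ and signs $+,-,+$; since $0<A<B$ the fourth (unbounded-adjacent) region is controlled and all bounded regions cut out by $C^{+-+}(A,B,B)$ have area among $\{A,B,B,\ldots\}$, with the exactness condition pinning down the remaining lobe.

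For the direction $8_+(A)\nprec C^{+-+}(A,B,B)$: suppose such a relatively exact cobordism exists, so by Lemma \ref{lem_tangle with same diagrams} there is a Lagrangian tangle $L$ with $\cD_{\p_-L}=8_+(A)$ and $\cD_{\p_+L}=C^{+-+}(A,B,B)$. Here the lower diagram is $8_+(A)$, so I would look for a big disk bound by $\cD_1=8_+(A)$ with all positive corners (condition (2) of Definition \ref{defn_big little disk} in the ``$\cD_1$ with all positive corners'' case) — this is exactly the familiar horizontal disk over a lobe of the figure-eight, which has all convex corners, is aligned, $\mu_2=0$, and has area $A$; since the total area is zero, the alternative clause $\area(A)\le a(D_1)$ is vacuous and we are in the $a(D_1)=a(D_2)=0$ case, so it is genuinely big. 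By Theorem \ref{thm_main application} there must be a little disk $B'$ relative to it, bound by the pair, with $\area(B')\le A$. Running through Definition \ref{defn_big little disk}(a)--(d) for a big disk bound by $\cD_1$ with all positive corners: a little disk is either (b) another disk bound by $\cD_1$ sharing corners, (c)/(d) do not apply in the relevant orientation, or a disk bound by $\cD_2=C^{+-+}(A,B,B)$ with all negative corners. The key computation is that \emph{no} such little disk has area $\le A$: the disks bound by $8_+(A)$ other than the big one all have area $\ge A$ (the other lobe has area $A$ but, as in the proof of Corollary \ref{cor_figure 8 knot}, the writhe comparison — both diagrams have equal writhe so $\chi(L)=0$ and $L$ is a multiply-punctured sphere — gives $[A']\ne[B']$ in $H_2(\R^4,L)$, forcing strict inequality), and the all-negative-corner disks bound by $C^{+-+}(A,B,B)$ have area at least $B>A$. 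This is the step I expect to be the main obstacle: carefully enumerating which sub-disks of $C^{+-+}(A,B,B)$ can serve as little disks and checking each has area $>A$, using the sign pattern $+,-,+$ to rule out small all-negative disks and the exactness relation to compute the area of the unmarked region.

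For the reverse direction $C^{+-+}(A,B,B)\nprec 8_+(A)$: now the lower diagram is $C^{+-+}(A,B,B)$ and the upper is $8_+(A)$. I would take as big disk the lobe of area $A$ in $C^{+-+}(A,B,B)$ with all positive corners (it is convex, aligned, $\mu_2=0$), again big since all total areas vanish. Theorem \ref{thm_main application} then produces a little disk $B'$ with $\area(B')\le A$; the candidates are other all-negative or corner-sharing disks bound by $C^{+-+}(A,B,B)$, or an all-negative disk bound by $8_+(A)$. But the lobes of $8_+(A)$ bound only all-positive or all-negative disks of area exactly $A$, and — as before — the homology/equality clause of Theorem \ref{thm_main application} together with $\chi(L)=0$ forbids equality unless $[B']=[A]$ and they share all corners, which they cannot since the diagrams are distinct and the corner point structures differ. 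The remaining candidates inside $C^{+-+}(A,B,B)$ have area $\ge B > A$. Either way we reach a contradiction, so neither undercut can hold. Finally, since $[8_+(A)]\ne[C^{+-+}(A,B,B)]$ (they have different underlying knot diagrams) and neither undercuts the other, these are unrelated enriched knot diagrams, completing Part 3 of Lemma \ref{lem_partial order} and showing $\prec$ is a strict partial order rather than a total order. I would close by remarking, as in the paragraph preceding Corollary \ref{cor_figure 8 knot}, that Remark \ref{rem_automatic relative exactness} applies here (equal writhes), so the obstruction is in fact to \emph{any} Lagrangian cobordism, not merely a relatively exact one.
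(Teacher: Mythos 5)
Your overall strategy is the same as the paper's: apply Theorem~\ref{thm_main application} once in each direction, exhibit a big disk of area $A$, and check that every candidate little disk has area at least $A$ with the equality clause failing. However, there is a genuine sign error that needs fixing: Definition~\ref{defn_big little disk}, condition~(2), says a big disk bound by the \emph{lower} diagram $\cD_1$ must have \emph{all negative} corners (and bound by $\cD_2$ all positive corners), not ``$\cD_1$ with all positive corners'' as you write. You repeat this in both directions, selecting lobes by the wrong sign criterion. For these particular diagrams the numerical comparison happens to come out right regardless, because the two lobes of $8_+(A)$ have equal area $A$ and the two small regions of $C^{+-+}(A,B,B)$ (the marked $A$-lobe and the unmarked region, of area $A-B+B=A$) also have area $A$; but as written your chosen disk is not a big disk and Theorem~\ref{thm_main application} does not apply to it. You should instead pick the negative-corner lobe of $8_+(A)$ (for the first direction) and the negative-corner region of area $A$ in $C^{+-+}(A,B,B)$ (for the second), or equivalently a positive-corner disk bound by the upper diagram.

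A secondary confusion: you say ``(c)/(d) do not apply in the relevant orientation'' for the first direction, but (d) precisely governs the case of a big disk bound by $\cD_1$, and you then go on to describe exactly the class of little disks that (d) produces. Your explicit treatment of the equality clause (corner-sharing plus $[A]=[B]$ in $\Pi_2(\R^4,L)$) is a useful elaboration; the paper's proof only says ``none of these have area strictly less than $A$,'' leaving the equality case implicit, whereas you correctly note that equal-area candidates cannot share all corners (either they are bound by different diagrams, or they are the two non-corner-sharing lobes of $8_+$), so the equality clause cannot be satisfied. Finally, your closing appeal to Remark~\ref{rem_automatic relative exactness} presupposes that the writhes of $8_+(A)$ and $C^{+-+}(A,B,B)$ are equal; this is stated in the paper's preamble to the corollaries, but your invocation of it should not confuse the superscript corner signs $(+,-,+)$ with crossing signs, which are what the writhe is computed from.
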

\begin{proof}
Refer Figure \ref{fig:no relation}.
For the pair $(8_+(A), C^{+-+}(A, B, B))$, the upper lobe labelled $A_1$ is a big disk. The two possible little disks are labelled $B_1$, $A_3$, and $A_4$. But none of these have area strictly less than $A$. Therefore, this is not an undercutting pair.

For the pair $(C^{+-+}(A, B, B), 8_+(A))$, $A_3$ is a big disk. The little disks for $A_3$ are $A_1$, $A_2$, and $A_4$. None of these have area strictly less than $A$.
\begin{figure}[h!]
\begin{center}
  \includegraphics[width = 5in]{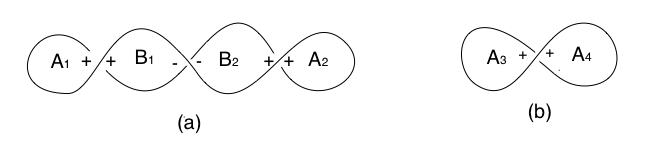}
   \end{center}
 \caption{For $A_1 = A_2 = A_3 = A_4$, $B_1 = B_2$ as areas: (a) $C^{+-+}(A,B,B)$, (b) $8_+(A)$.}
  \label{fig:no relation}
\end{figure}

\end{proof}

\medskip

\bibliographystyle{alpha}
\bibliography{references}

\end{document}